\newcommand{\Lo}{\mathcal{L}}
\newcommand{\C}{\mathcal{C}}
\newcommand{\T}{\mathscr{T}}
\newcommand{\m}{\mathcal{M}}
\newcommand{\ct}{\tilde{c}}
\newcommand{\wb}{\bar w}
\newcommand{\zb}{\bar z}
\newcommand{\qb}{\bar q}
\newcommand{\yb}{\bar y}
\newcommand{\zt}{\tilde z}
\newcommand{\rt}{\tilde \rho}
\newcommand{\pt}{\bar p}
\newcommand{\jh}{\hat{J}}
\newcommand{\lt}{\tilde L}
\newcommand{\tc}{\tilde \C}
\newcommand{\di}{\mathrm{div}\,}
\newcommand{\bnorm}[1]{|\kern-0.12em|\kern-0.12em|#1|\kern-0.12em|\kern-0.12em|}
\newcommand{\R}{\mathbb{R}}
\newcommand{\h}{\mathbf{H}}
\newcommand{\Lb}{\mathbf{L}}
\newcommand{\tih}{\tilde{\mathbf{H}}}
\newcommand{\junk}[1]{{}}
\newtheorem{rem}{Remark}
\newtheorem{cor}{Corollary}
\title{Multigrid preconditioners for the Newton-Krylov method in the optimal 
       control of the stationary {N}avier-{S}tokes equations 
       \thanks{This material is based upon work supported by the     
       U.S. Department of Energy Office of Science, Office of Advanced 
       Scientific Computing Research, Applied
       Mathematics program under Award Number DE-SC0005455,
       and by the National Science Foundation under awards
       DMS-1016177 and DMS-0821311.} 
      }
\author{Ana Maria Soane\thanks{Department of Mathematics,
        United States Naval Academy, 121 Blake Road Annapolis, Maryland 21402
        ({\tt soane@usna.edu}).} \and 
        Andrei Dr{\u a}g{\u a}nescu
        \thanks{Department of Mathematics and 
        Statistics, University of Maryland, Baltimore County, 
        1000~Hilltop Circle, Baltimore, Maryland 21250 ({\tt draga@umbc.edu}).
        }
        }
\begin{document}

\maketitle

\begin{abstract}
\textcolor{black}{The focus of this work is on the construction and analysis 
of  optimal-order multigrid preconditioners to be used in the 
Newton-Krylov method for a distributed optimal control problem constrained by 
the stationary Navier-Stokes equations. As in our earlier work~\cite{DS}
on the optimal control of the stationary Stokes equations, the strategy is to 
eliminate the state and adjoint variables from the optimality system and 
solve the reduced nonlinear system in the control variables. While 
the construction of the preconditioners extends naturally  the work in~\cite{DS},
the analysis shown in this paper 
presents a set of significant challenges that are rooted in the nonlinearity of 
the constraints. We also include numerical results that showcase the behavior of the proposed
preconditioners and show that for low to moderate Reynolds numbers they can
lead to significant drops in number of iterations and wall-clock savings.
}
\end{abstract}

\begin{keywords} 
multigrid methods, PDE-constrained optimization, Navier-Stokes equations, 
finite elements
\end{keywords}

\begin{AMS}
65F08, 
65K15, 
65N21, 
65N55, 
90C06 
\end{AMS}

\pagestyle{myheadings}
\thispagestyle{plain}
\markboth{A.~M.~SOANE AND A.~DR{\u A}G{\u A}NESCU}{MULTIGRID PRECONDITIONING FOR NAVIER-STOKES CONTROL}

\

\section{Introduction}

We consider the optimal control problem
\begin{align}\label{equ:cost}
 \min_{y,p,u} J(y,p,u) = 
 \frac{\gamma_y}{2}\|y - y_d\|^2_{\mathbf{L}^2(\Omega)} +
 \frac{\gamma_p}{2}\|p-p_d\|^2_{L^2(\Omega)} + 
 \frac{\beta}{2}\|u\|^2_{\mathbf{L}^2(\Omega)},
\end{align}
subject to the stationary Navier-Stokes equations
\begin{equation}\label{equ:ns}
\begin{aligned}
 -\nu \Delta y + (y \cdot \nabla)y + \nabla p 
               &= u \quad \text{in } \Omega, \\
         \di y &= 0 \quad \text{in } \Omega, \\
             y &= 0 \quad \text{on } \partial \Omega,
\end{aligned}
\end{equation}
where $\Omega \subset \R^2$ is a  bounded  convex polygonal domain. 
The goal of the control problem is to find a force $u$ that gives rise 
to a velocity $y$ and/or pressure $p$ to match a known target velocity 
$y_d$, respectively pressure $p_d$. Since this problem is ill-posed, we
consider a standard Tikhonov regularization for the force, with the
regularization parameter $\beta$ being a fixed positive number. 
The constants $\gamma_y, \gamma_p$ are nonnegative, not both zero.

\textcolor{black}{Model problems like~\eqref{equ:cost}--\eqref{equ:ns}
are commonly encountered in the literature on optimal control of partial
differential equations (PDEs), where boundary conditions, forcing terms, initial values, or coefficients
are treated as controls in order for the solution they determine to be close to a 
target state. For a given PDE-constraint, the associated  control problem that is most 
studied  is the case of distributed body forcing as control 
(e.g, see~\cite{MR2583281}).}

\textcolor{black}{Several works are centered on optimal control problems constrained by the Navier-Stokes equations, see e.g. 
\cite{GHS,GMnum, GMoc, MR1767769, MR1812735,  MR1858000, MR1905904, MR2179484, LRT} 
and the references therein, where both optimality conditions and numerical methods 
are addressed, for the unconstrained, control-constrained, or mixed 
control-state constrained problems. For a comprehensive overview of optimal flow control we refer the reader 
to \cite{Gb2}. In light of the potentially very large scale of the problems involved, a critical issue for all PDE-constrained optimization problems is to devise efficient solvers.
These solvers largely fall into two categories: 
the first kind targets the sparse but indefinite  Karush-Kuhn-Tucker (KKT) systems~\cite{RW, KZ},
while the second kind is centered on reduced systems. Our strategy falls in the second category.
More precisely we focus} on the efficient solution of the linear systems arising 
in the solution process of~\eqref{equ:cost}--\eqref{equ:ns}, specifically on 
the design of  multigrid preconditioners for the reduced Hessian in the 
Newton-CG method.  
To the best of our knowledge, this  has not been addressed in the literature 
for the Navier-Stokes optimal control problem.

\textcolor{black}{The multigrid preconditioning technique in this paper 
is rooted in the two- and multilevel methods for linear inverse problems proposed 
by Rieder~\cite{MR97k:65299}, Hanke and Vogel~\cite{MR2001h:65069}, and 
Dr{\u a}g{\u a}nescu and Dupont~\cite{DD}, the latter being primarily concerned with
regularized time-reversal of parabolic equations. The method has since been extended
to distributed control of linear elliptic equations 
for problems with control constraints~\cite{MR2888316, MR3175516, MR3537010}, 
distributed optimal control of semilinear elliptic equations~\cite{jyotithesis}, 
distributed optimal control of linear parabolic equations~\cite{monathesis}, 
as well as boundary control of elliptic equations~\cite{monathesis}.}

\textcolor{black}{The research in this article extends our  earlier work on 
the distributed optimal control of the Stokes equations~\cite{DS}; 
essentially, we show that for low to moderate Reynolds numbers 
the constructed preconditioners display the same optimal behavior as in the 
case of the Stokes-constrained problem. The fundamental 
departure from~\cite{DS}  resides, of course, in 
the nonlinearity of the constraints. Due to the linearity of the Stokes
equations in~\cite{DS}, the cost functional
of the reduced system is quadratic; thus the Hessian operator, and hence the 
preconditioner, is independent of the control. Instead, for the 
problem~\eqref{equ:cost}--\eqref{equ:ns}, the Hessian depends on the control, and
the preconditioner changes at every Newton iteration accordingly. While the construction of the preconditioner is
a natural extension of the one in~\cite{DS},
the main contribution  in this paper lies in the analysis.
The key element of the analysis
is the estimation of the $\mathbf{L}^2$-operator norm of the difference between the 
discrete Hessian and the two-grid preconditioner, as expressed in the main results of this
paper, Theorems~\ref{thm:hessv} and~\ref{thm:hessvp}. 
Due to the nonlinearity of the constraints, 
the discrete and continuous Hessians of the reduced 
cost functionals for~\eqref{equ:cost}--\eqref{equ:ns}
are more involved  than in~\cite{DS}, and hence the necessary error estimates leading to 
the aforementioned results are more challenging.
By comparison, the transition from linear elliptic 
 to semilinear elliptic constraints~\cite{jyotithesis}
was facilitated by the existence of $L^{\infty}$-estimates for the control-to-state
operator. The merit of our analysis is that we were able to avoid
$\mathbf{L}^{\infty}$ and  $\mathbf{W}_1^{\infty}$-estimates for the Navier-Stokes 
equations and its linearization which are more restrictive~\cite{MR3422453}.}
\textcolor{black}{For convenience, the analysis is restricted to  two dimensional domains, though most of
it can be extended to three dimensions with some restrictions on the discretization, 
at least for the case of velocity control ($\gamma_p=0$). 
The preconditioning formulation 
can be used without change for three-dimensional problems, since it is based on a
velocity-pressure formulation.}


The paper is organized as follows. In Section 2, we introduce the 
\textcolor{black}{reduced} optimal 
control problem and review \textcolor{black}{a set of} 
results that will be needed in the sequel. 
In Section 3, we introduce the discrete optimal control problem and \textcolor{black}{discuss}
finite element estimates that will be needed for the multigrid analysis. 
Section 4 contains {the} analysis of the two-grid preconditioner and the  main results of the paper. 
In Section 5, we showcase numerical experiments that 
illustrate our theoretical results. Conclusions \textcolor{black}{and 
a discussion of possible extensions} are presented in Section~6.

\section{Problem formulation}
\subsection{Preliminaries}
In this section we introduce  notations and review some classical 
\textcolor{black}{existence, uniqueness, and regularity}
results regarding the Navier-Stokes equations
\textcolor{black}{which will allow to formulate the reduced form~\eqref{equ:reduced}
of~\eqref{equ:cost}--\eqref{equ:ns}, and will play an essential role in the analysis}.  
We use standard notation for the Sobolev spaces $H^m(\Omega)$ and for their 
vector-valued counterparts we use the 
boldface notation. We denote by  $\tih^{-m}(\Omega)$  the dual (with respect 
to $\mathbf{L}^2$-inner product) of $\h^m(\Omega)\cap \h_0^1(\Omega)$  
and define $Q=L_0^2(\Omega)=\{p \in L^2(\Omega): \int_{\Omega} p \,dx = 0\}$, 
$X=\h_0^1(\Omega)$, and  
$V=\{v \in \h^1_0(\Omega):(\di v, q) = 0, \  \forall q \in Q\}$.
Throughout this paper  we write $(\cdot, \cdot)$ for the inner product in 
$L^2(\Omega)$ or $\mathbf{L}^2(\Omega)$, according to context, if there is 
no risk of misunderstanding. The $\h^m(\Omega)$ or $H^m(\Omega)$-norm
will be denoted by $\|\cdot\|_m$, while $\|\cdot\|$ denotes the
$\mathbf{L}^2(\Omega)$ or $L^2(\Omega)$-norm. Furthermore, define the norm in $V'$ by
$$\|u\|_{V'} = \sup_{\phi\in V\setminus\{0\}} (u,\phi) / \|\nabla \phi\|.$$

To define the weak formulation of~\eqref{equ:ns}, we introduce 
the 
bilinear forms 
\begin{align}
 a(y,\phi) &=  \nu (\nabla y,\nabla \phi) 
         =  \nu\sum_{i=1}^2\int_{\Omega} \nabla y_i\cdot \nabla \phi_i \,dx  
		    \quad \forall y,\phi \in  X, \\
 b(\phi,p) &= -\int_{\Omega} p \, \di \phi \, dx \quad \forall \phi \in X,  
            \forall p \in Q,
\end{align}
\textcolor{black}{as well as} the trilinear form
\begin{align}\label{equ:tri}
 c(y;\phi,\psi) = ((y\cdot \nabla)\phi,\psi)\quad 
 \forall y,\phi, \psi \in \h^1(\Omega).
\end{align}
A weak formulation of the Navier-Stokes equations is given by: 

\emph{Given $u \in \h^{-1}(\Omega)$, find 
$(y,p) \in X\times Q$ satisfying}
\begin{equation}\label{equ:nsweak}
\begin{aligned}
 a(y,\phi) + c(y;y,\phi) + b(\phi,p) &= 
           \langle u,\phi \rangle \quad \forall \phi \in X,\\
 b(y,q) &= 0 \quad \qquad \forall q \in Q,  
\end{aligned}
\end{equation}
where $\langle \cdot, \cdot \rangle$ denotes the duality pairing between 
$\h_0^1(\Omega)$ and $\h^{-1}(\Omega)$.  
Following~\cite{Lay}, the system~\eqref{equ:nsweak} can be written equivalently as:

\emph{Find $y\in V$ that satisfies }
\begin{equation}\label{equ:nsweakv}
 a(y,\phi) + c(y;y,\phi) = \langle u,\phi\rangle \quad \forall \phi \in V.
\end{equation}

We recall here a standard result regarding the existence of solution of 
\eqref{equ:nsweak} and uniqueness for small data, see e.g. \cite{GR,Lay}. 
For $\h^2$ regularity see \cite{LRricam}.
\begin{theorem}\label{thm:nse}
Let $\Omega \subset \R^2$ be  a bounded domain with Lipschitz continuous 
boundary.  Then for any $\nu >0$ and $u \in \h^{-1}(\Omega)$ there exists 
at least one solution $(y,p) \in V \times Q$ of the stationary Navier-Stokes 
problem \eqref{equ:nsweak} that satisfies the estimate 
\begin{align}\label{equ:ystab}
 \|\nabla y \| \leq \nu^{-1} \|u\|_{V'}. 
\end{align}
Moreover, the solution is unique if the data satisfies the smallness condition
\begin{align}\label{equ:unique}
 \m \nu^{-2} \|u\|_{V'} < 1,  \text{ with }  
 \m = \sup_{\phi,\psi,\chi \in X\setminus\{0\}}{ 
 \frac{|c(\phi;\psi,\chi)|}{\|\nabla \phi\| \|\nabla \psi\| \|\nabla \chi\|}}.
\end{align}
If $\Omega$ is convex and polygonal, and 
\mbox{$u \in \mathbf{L}^2(\Omega)$}, 
then $y \in \h^2(\Omega)$, $p\in H^1(\Omega)$ and
\begin{align}\label{equ:h2nse}
 \|y\|_{2} + \|p\|_{1} \leq 
 C(1+\|u\|^3).
\end{align}
\end{theorem}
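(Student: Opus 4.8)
The plan is to reconstruct the four assertions along the classical lines of~\cite{GR,Lay,LRricam}, working with the divergence-free formulation~\eqref{equ:nsweakv} on $V$ for the first three and then recasting the problem as a Stokes system for the regularity. For existence and~\eqref{equ:ystab}: since $V$ is a separable Hilbert space, fix a basis $\{w_k\}$ and for each $m$ seek a Galerkin solution $y_m\in V_m=\mathrm{span}\{w_1,\dots,w_m\}$ of the finite-dimensional system obtained by testing~\eqref{equ:nsweakv} against $w_1,\dots,w_m$; solvability follows from Brouwer's fixed point theorem, the decisive structural fact being the antisymmetry $c(v;\phi,\phi)=0$ for $v\in V$ (a consequence of $\di v=0$). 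Testing with $\phi=y_m$ annihilates the trilinear term and gives $\nu\|\nabla y_m\|^2=\langle u,y_m\rangle\le\|u\|_{V'}\|\nabla y_m\|$, hence $\|\nabla y_m\|\le\nu^{-1}\|u\|_{V'}$ uniformly in $m$. Passing to a weakly convergent subsequence $y_m\rightharpoonup y$ in $V$ and using the compact embedding $\h^1_0(\Omega)\hookrightarrow\Lb^4(\Omega)$ (valid in two dimensions) to handle $c(y_m;y_m,\phi)$ in the limit yields a solution $y\in V$ of~\eqref{equ:nsweakv}, which satisfies~\eqref{equ:ystab} by weak lower semicontinuity of $\|\nabla\cdot\|$.

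To recover the pressure I would note that $\phi\mapsto\langle u,\phi\rangle-a(y,\phi)-c(y;y,\phi)$ is a bounded linear functional on $X$ that vanishes on $V$, so by the de Rham theorem --- equivalently the inf-sup stability of $b$ on $X\times Q$ --- it equals $b(\cdot,p)$ for a unique $p\in Q$, and $(y,p)$ solves~\eqref{equ:nsweak}. For uniqueness, take two solutions $(y_1,p_1),(y_2,p_2)$ and set $w=y_1-y_2\in V$; subtracting the momentum equations, testing with $w$, and using $c(y_2;w,w)=0$ and the identity $c(y_1;y_1,w)-c(y_2;y_2,w)=c(w;y_1,w)$ gives $\nu\|\nabla w\|^2=-c(w;y_1,w)$. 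Bounding the right-hand side via the definition of $\m$ and~\eqref{equ:ystab} by $\m\nu^{-1}\|u\|_{V'}\|\nabla w\|^2$ and invoking the smallness condition~\eqref{equ:unique} forces $w=0$, after which $p_1=p_2$ follows from inf-sup.

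For the $\h^2\times H^1$ regularity on a convex polygon I would bootstrap, reading~\eqref{equ:nsweak} as a Stokes problem with datum $f=u-(y\cdot\nabla)y$. In two dimensions $\h^1_0(\Omega)\hookrightarrow\Lb^4(\Omega)$, so H\"older gives $(y\cdot\nabla)y\in\Lb^{4/3}(\Omega)$ with $\|(y\cdot\nabla)y\|_{\Lb^{4/3}}\le C\|\nabla y\|^2\le C\|u\|^2$ (using~\eqref{equ:ystab} and $\|u\|_{V'}\le C\|u\|$). Stokes regularity on the convex polygon with $\Lb^{4/3}$ data then puts $y\in\mathbf{W}^{2,4/3}(\Omega)\hookrightarrow\mathbf{W}^{1,4}(\Omega)\cap\Lb^\infty(\Omega)$, with norm $\le C(\|u\|+\|u\|^2)$; consequently $(y\cdot\nabla)y\in\Lb^2(\Omega)$ with $\|(y\cdot\nabla)y\|\le\|y\|_{\Lb^\infty}\|\nabla y\|\le C(\|u\|^2+\|u\|^3)$, and a second application of Stokes regularity --- the $\h^2(\Omega)\times H^1(\Omega)$ shift with $\Lb^2$ data, available on convex polygons --- gives $\|y\|_2+\|p\|_1\le C\|f\|\le C(1+\|u\|^3)$.

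Existence, the energy bound, and uniqueness are entirely classical; the only step needing genuine care is the regularity bootstrap, specifically checking that the Stokes shift estimate on the convex polygonal domain is indeed available at the intermediate $\Lb^{4/3}$ level (the exponent stays below the corner-induced threshold, so no extra restriction on the geometry is required) and bookkeeping the constants through the two uses of the convective estimate so that the resulting growth in $\|u\|$ is exactly the stated cubic power.
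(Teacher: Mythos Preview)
Your argument is correct and follows the classical route. Note, however, that the paper does not actually supply a proof of this theorem: it is stated as a recalled result, with existence and uniqueness attributed to \cite{GR,Lay} and the $\h^2$--regularity to \cite{LRricam}. Your Galerkin/Brouwer construction for existence, the energy identity for~\eqref{equ:ystab}, the inf-sup recovery of the pressure, and the uniqueness computation are precisely the standard arguments in those references, and your regularity bootstrap (Stokes shift with $\Lb^{4/3}$ data $\Rightarrow y\in\mathbf{W}^{2,4/3}\hookrightarrow\Lb^\infty$, then a second Stokes shift with $\Lb^2$ data) is a correct way to obtain the cubic bound~\eqref{equ:h2nse}. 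One small bookkeeping remark: in the uniqueness step you use the decomposition $c(y_1;y_1,w)-c(y_2;y_2,w)=c(w;y_1,w)+c(y_2;w,w)$ and then $c(y_2;w,w)=0$, which is fine; just be explicit that this last vanishing uses $y_2\in V$.
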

\textcolor{black}{Recall that} throughout this paper we will assume $\Omega$ to be 
\textcolor{black}{a convex polygonal domain}, so that the 
$\h^2$-regularity of the Navier-Stokes problem is ensured. 
We state here some well-known results concerning the trilinear form defined in  
\eqref{equ:tri}, that will be needed in the sequel \cite{LR,GR,GMoc}.
\begin{lemma}\label{lemma:trilinear}
 The trilinear form  $c(y;\phi,\psi)$  defined in \eqref{equ:tri} 
 has the following properties: 
\begin{equation}\label{equ:trilinear}
 \begin{aligned}
  &c(y;\phi,\psi) = -c(y;\psi,\phi)
   \quad \forall y \in V, \forall \phi,\psi \in \h^1(\Omega),\\
  &c(y;\phi,\phi) =0 
   \quad \forall y \in V, \phi \in \h^1(\Omega),\\
  &c(y;\phi,\psi) = ((\nabla \phi)^T\psi,y)
   \quad \forall y,\phi,\psi \in \h^1(\Omega),\\
  &|c(y;\phi,\psi)| \leq \|y\|_1 \|\phi\|_1 \|\psi\|_1 
   \quad \forall y,\phi,\psi\in V,\\
  &|c(y;\phi,\psi)| \leq \m \|\nabla y \| \|\nabla \phi \| \| \nabla \psi\|
   \quad \forall y,\phi,\psi \in X, \\ 
  &|c(y;\phi,\psi)| \leq C\|u\|_1 \|\phi\|_1 \|\psi\|_1 
   \quad \forall y,\phi,\psi \in \mathbf{H}^1(\Omega), \\ 
  &
   |c(y;\phi,\psi)|\leq C \|y\| \|\phi\|_2\|\psi\|_1 
   \quad \forall y,\psi \in X, \phi \in \h^2(\Omega),
   \\
  &|c(y;\phi,\psi)|\leq C \|y\|_1 \|\phi\|_2\|\psi\| 
   \quad \forall y,\psi \in X, \phi \in \h^2(\Omega), \\
\end{aligned}
\end{equation}
with $\m$ given in \eqref{equ:unique} and $C$ independent of $y, \phi, \psi$.
\end{lemma}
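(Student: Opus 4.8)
The plan is to dispatch the eight items of Lemma~\ref{lemma:trilinear} in two blocks: the three algebraic identities (the first, second, and third lines), which rest on integration by parts together with the constraint $\di y = 0$, and the five bounds (the fourth through eighth lines), which are all instances of H\"older's inequality combined with the two-dimensional Sobolev embeddings $\mathbf{H}^1(\Omega) \hookrightarrow \mathbf{L}^4(\Omega)$ and $\mathbf{H}^2(\Omega) \hookrightarrow \mathbf{W}^{1,4}(\Omega)$, together with the Poincar\'e inequality to identify $\|\nabla \cdot\|$ with the $\mathbf{H}^1$-norm on $X = \mathbf{H}^1_0(\Omega)$. The fifth bound is nothing but the definition of $\m$ in \eqref{equ:unique}, and we note in passing that the $\|u\|_1$ appearing on the right-hand side of the sixth line should read $\|y\|_1$.

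For the third identity I would expand in components, $c(y;\phi,\psi) = \sum_{i,j} \int_\Omega y_j\, (\partial_j \phi_i)\, \psi_i \, dx$, and observe that the integrand of $((\nabla \phi)^T\psi, y)$ is the same sum regrouped, since $((\nabla \phi)^T\psi)_j = \sum_i (\partial_j\phi_i)\psi_i$; integrability of the triple product is secured by one of the H\"older bounds below. For antisymmetry, I would add the two terms to obtain $c(y;\phi,\psi) + c(y;\psi,\phi) = \int_\Omega y \cdot \nabla(\phi \cdot \psi)\, dx$ and integrate by parts: since $y \in V \subset \mathbf{H}^1_0(\Omega)$ has vanishing trace while $\phi\cdot\psi \in W^{1,s}(\Omega)$ for some $s > 1$ (its gradient is a sum of products of a component of $\nabla\phi$ or $\nabla\psi$ with a component of $\psi$ or $\phi$, hence lies in $L^{4/3}(\Omega)$ by H\"older and $\mathbf{H}^1\hookrightarrow\mathbf{L}^4$), the boundary contribution vanishes and what remains equals $-(\di y, \phi\cdot\psi) = 0$. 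The second identity is the special case $\psi = \phi$ (equivalently, $c(y;\phi,\phi) = \tfrac12 \int_\Omega y\cdot\nabla|\phi|^2\,dx$).

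For the estimates I would start from $|c(y;\phi,\psi)| \le \int_\Omega |y|\,|\nabla\phi|\,|\psi|\,dx$ and choose suitable exponents. The fourth and sixth lines follow with the H\"older triple $(4,2,4)$: $|c(y;\phi,\psi)| \le \|y\|_{\mathbf{L}^4}\,\|\nabla\phi\|\,\|\psi\|_{\mathbf{L}^4} \le C\|y\|_1\|\phi\|_1\|\psi\|_1$, using $\mathbf{H}^1 \hookrightarrow \mathbf{L}^4$. For the seventh line I would take the triple $(2,4,4)$ and invoke $\mathbf{H}^2 \hookrightarrow \mathbf{W}^{1,4}$: $|c(y;\phi,\psi)| \le \|y\|\,\|\nabla\phi\|_{\mathbf{L}^4}\,\|\psi\|_{\mathbf{L}^4} \le C\|y\|\,\|\phi\|_2\,\|\psi\|_1$. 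For the eighth line, where the $\mathbf{L}^2$-norm must fall on $\psi$, I would instead use the already-established third identity and apply H\"older with $(4,2,4)$ to $((\nabla\phi)^T\psi, y)$: $|c(y;\phi,\psi)| \le \|\nabla\phi\|_{\mathbf{L}^4}\,\|\psi\|\,\|y\|_{\mathbf{L}^4} \le C\|y\|_1\,\|\phi\|_2\,\|\psi\|$.

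Since the statement only collects classical facts, I do not anticipate a real obstacle; the one point that deserves care is the justification of the integration by parts in the antisymmetry identity when $\phi$ and $\psi$ are merely $\mathbf{H}^1$. This is handled by the $W^{1,s}$ observation above, or, alternatively, by approximating $y \in V$ in the $\mathbf{H}^1$-norm by smooth divergence-free vector fields of compact support and using the continuity of $c(\cdot\,;\cdot,\cdot)$ provided by the fourth-line bound.
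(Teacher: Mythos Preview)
Your proposal is correct and follows the same standard route as the paper. The paper in fact only proves the eighth estimate (declaring the others standard with references), applying H\"older's inequality with the triple $(4,4,2)$ and the embedding $\mathbf{H}^1\hookrightarrow\mathbf{L}^4$ directly to $((y\cdot\nabla)\phi,\psi)$; your detour through the third identity before applying H\"older is a cosmetic difference, since the integrand is the same.
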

\begin{proof}
While the others are standard, we prove here only the last estimate. Using H\"{older}'s inequality and the embedding 
 $\h^1(\Omega) \hookrightarrow \mathbf{L}^4(\Omega)$, we have
\begin{align*}
 |c(y;\phi,\psi)| = |((y\cdot \nabla)\phi,\psi)| \leq 
 \|y\|_{L^4(\Omega)}\|\nabla \phi\|_{L^4(\Omega)} \|\psi\|_{L^2(\Omega)}
 \leq C \|y\|_1\|\phi\|_2\|\psi\|.
\end{align*}
\end{proof}

When discretizing \eqref{equ:nsweak} using finite elements, in order 
to preserve the antisymmetry in the last two arguments of the trilinear form $c$ on the finite element 
spaces, it is standard to introduce a modified trilinear 
form  \cite{GMnum,Lay}  
\begin{align} \label{equ:ctilde}
 \ct(y;\phi,\psi) = \frac{1}{2}\{c(y;\phi,\psi)-c(y;\psi,\phi)\} 
                    \quad \forall y,\phi,\psi \in X, 
\end{align}
that has the following properties:
\begin{equation}
\begin{aligned}
  & c(y;\phi,\psi) = \ct(y;\phi,\psi) \quad 
                     \forall y \in V, \phi,\psi \in X, \\
  & \ct(y;\phi,\psi)   = -\ct(y;\psi,\phi) \quad \forall y,\phi,\psi \in X, \\
  & \ct(y;\psi,\psi)  = 0 \quad \forall y, \textcolor{black}{\psi} \in X, \\
  & |\ct(y;\phi,\psi)| \leq \m \|\nabla y \| \| \nabla \phi \| \|\nabla \psi \|
                       \quad \forall y,\phi,\psi \in X, 
\end{aligned}
\end{equation}
for the same $\m=\m(\Omega)$ as in \eqref{equ:unique}.
Thus, another variational formulation of \eqref{equ:nsweak} 
is:

\emph{Given $u\in \mathbf{H}^{-1}(\Omega)$, 
find $(y,p) \in X \times Q$ satisfying 
\begin{equation}\label{equ:nsweak3}
\begin{aligned}
 a(y,\phi) + \ct(y;y,\phi) +b(\phi,p) &= 
             \langle u, \phi \rangle \quad \forall \phi \in X,\\
 b(y,q) &= 0 \quad \qquad \forall q \in Q.
\end{aligned}
\end{equation}
}

We define the set of admissible controls $U =\{ u: \mathbf{L}^2(\Omega): \|u \| < \nu^2/(\m\kappa) \}$,
with $\m$ defined in \eqref{equ:unique} and $\kappa$ 
the embedding constant of $\mathbf{L}^2(\Omega)$ into $V'$. 
By Theorem~\ref{thm:nse}, 
the Navier-Stokes equations have a unique solution for each 
$u\in U$ on the right hand side of \eqref{equ:nsweak}.   
We introduce the control-to-state operators  
$Y:U \rightarrow V$, $P:U\rightarrow Q$  that assign to each  
$u \in U \subset \mathbf{L}^2(\Omega)$ the corresponding Navier-Stokes 
velocity $y = Y(u)$ and pressure $p=P(u)$, 
and rewrite problem \eqref{equ:cost} in reduced form as 
\begin{align}\label{equ:reduced}
 \min_{u\in U} \jh(u) = \frac{\gamma_y}{2}\|Y(u) - y_d \|^2 +
                        \frac{\gamma_p}{2}\|P(u) - p_d \|^2 +
                        \frac{\beta}{2} \|u\|^2. 
\end{align}
Throughout this paper we will assume that the target velocity field
$y_d$ is from $\h^{1}(\Omega)$; the target pressure $p_d$ is \textcolor{black}{assumed for now
to be in $Q$}.

We note that for all pairs $(y(u),u)$ with $u \in U$, we have
\begin{align}\label{equ:ell}
 \nu >\m(y), \quad \text{with } 
 \m(y) := \sup_{v \in X} \frac{|c(v;y,v)|}{\|\nabla v\|^2},
\end{align}
which 
ensures the ellipticity of the linearized equations about $y$. 
\textcolor{black}{The following estimate establishes the regularity of the 
solution of the linearized equations about $y$, and will be used in Section~\ref{sec:twogridsection}.}
\begin{lemma}\label{lemma:reglin}
Let $u \in U$ and $y=Y(u)\in V$. Then for every $g \in V'$ there exists 
a unique weak solution $(w,r) \in X\times Q$ of the linearized 
Navier-Stokes system
\begin{equation}\label{equ:lin0}
\begin{aligned}
 -\nu \Delta w + (w\cdot \nabla)y + (y\cdot\nabla) w + \nabla r 
       &= g  \quad \text{ in } \Omega,\\
 \di w &= 0  \quad \text{ in } \Omega,\\
     w &= 0 \quad \text{ on } \partial \Omega, 
\end{aligned}
\end{equation}
and 
\begin{align}\label{equ:estlinh1}
 \|\nabla w\| \leq \frac{2}{\nu}\|g\|_{V'}.
\end{align} 
If $\Omega$ is \textcolor{black}{convex} and polygonal, and 
$g\in \mathbf{L}^2(\Omega)$, then $w\in \h^2(\Omega)$, 
 \mbox{$r \in H^1(\Omega)$}, and 
\begin{align}\label{equ:estlinh2}
\|w\|_2 \leq C(y) \|g\|.
\end{align}
\end{lemma}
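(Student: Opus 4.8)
The plan is to treat the linearized system \eqref{equ:lin0} as a perturbation of the Stokes problem and apply standard saddle-point and elliptic regularity theory, using the ellipticity condition \eqref{equ:ell} to absorb the zeroth-order reaction term $(w\cdot\nabla)y$. First I would set up the weak formulation: find $(w,r)\in X\times Q$ such that $a(w,\phi) + \ct(w;y,\phi) + \ct(y;w,\phi) + b(\phi,r) = (g,\phi)$ for all $\phi\in X$ and $b(w,q)=0$ for all $q\in Q$. Restricting to the divergence-free subspace $V$, this reduces to finding $w\in V$ with $a(w,\phi) + \ct(w;y,\phi) + \ct(y;w,\phi) = \langle g,\phi\rangle$ for all $\phi\in V$. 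By the antisymmetry property $\ct(y;\phi,\phi)=0$, testing with $\phi=w$ kills the term $\ct(y;w,w)$, so the bilinear form $\mathcal B(w,\phi) := a(w,\phi)+\ct(w;y,\phi)+\ct(y;w,\phi)$ satisfies $\mathcal B(w,w) = \nu\|\nabla w\|^2 + \ct(w;y,w) \geq (\nu - \m(y))\|\nabla w\|^2 > 0$ by \eqref{equ:ell}. Boundedness of $\mathcal B$ on $V\times V$ follows from the last estimate in the $\ct$-property list together with $\|\nabla y\|<\infty$. Hence Lax–Milgram gives existence and uniqueness of $w\in V$, and testing with $w$ yields $\|\nabla w\|\leq (\nu-\m(y))^{-1}\|g\|_{V'}$; to get the clean constant $2/\nu$ in \eqref{equ:estlinh1} I would note that for $u\in U$ the smallness of $\|u\|$ forces $\m(y)\leq \tfrac12\nu$ (via \eqref{equ:ystab}, the definition of $U$, and the bound on $\m(y)$ in terms of $\|\nabla y\|$), so $(\nu-\m(y))^{-1}\leq 2/\nu$. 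The pressure $r\in Q$ is then recovered from the full variational equation by the inf-sup (LBB) condition for $b$ on $X\times Q$, which holds on the convex polygonal domain.

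For the $\h^2$-regularity claim \eqref{equ:estlinh2}, I would rewrite \eqref{equ:lin0} as a Stokes problem with modified right-hand side: $-\nu\Delta w + \nabla r = g - (w\cdot\nabla)y - (y\cdot\nabla)w =: \tilde g$ in $\Omega$, $\di w=0$, $w|_{\partial\Omega}=0$. Since $\Omega$ is convex polygonal, the stationary Stokes operator satisfies the a priori estimate $\|w\|_2 + \|r\|_1 \leq C\|\tilde g\|$ (cf.\ the regularity cited for Theorem~\ref{thm:nse}). It remains to bound $\|\tilde g\|$ in terms of $\|g\|$. The term $\|g\|$ is already controlled. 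For the convective terms I would use the trilinear estimates from Lemma~\ref{lemma:trilinear}: from the seventh inequality, $\|(y\cdot\nabla)w\| = \sup_{\|\psi\|=1}|c(y;w,\psi)|\cdot$ — here it is cleaner to bound $\|(y\cdot\nabla)w\|_{\mathbf L^2}$ directly by H\"older and Sobolev embedding $\h^1\hookrightarrow\mathbf L^4$ (in 2D): $\|(y\cdot\nabla)w\| \leq \|y\|_{\mathbf L^4}\|\nabla w\|_{\mathbf L^4} \leq C\|y\|_1\|w\|_2$, and similarly $\|(w\cdot\nabla)y\| \leq \|w\|_{\mathbf L^4}\|\nabla y\|_{\mathbf L^4} \leq C\|w\|_1\|y\|_2$. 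The first can be absorbed into the left-hand side only if the constant is small; instead I would use interpolation $\|\nabla w\|_{\mathbf L^4} \leq C\|w\|_1^{1/2}\|w\|_2^{1/2}$ (Gagliardo–Nirenberg in 2D) so that $\|(y\cdot\nabla)w\| \leq C\|y\|_1\|w\|_1^{1/2}\|w\|_2^{1/2} \leq \epsilon\|w\|_2 + C_\epsilon\|y\|_1^2\|w\|_1$, then absorb the $\epsilon\|w\|_2$ term and use the already-established $\|w\|_1\leq C\|g\|_{V'}\leq C\|g\|$. Combining, $\|\tilde g\|\leq C(1+\|y\|_1+\|y\|_2+\|y\|_1^2)\|g\|$, and recalling from \eqref{equ:h2nse} that $\|y\|_2\leq C(1+\|u\|^3)$ is finite, this produces a constant $C(y)$ depending on $y$ (equivalently on $u$) as claimed.

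The main obstacle I anticipate is the absorption step for the $\h^2$ estimate: the term $(y\cdot\nabla)w$ genuinely involves two derivatives on $w$ after using the Stokes estimate, so a naive bound $\|y\|_{\mathbf L^4}\|\nabla w\|_{\mathbf L^4}$ with $\|\nabla w\|_{\mathbf L^4}\leq C\|w\|_2$ would require $C\|y\|_1 < \nu$, which need not hold without a smallness assumption on the data. The Gagliardo–Nirenberg interpolation trick that trades $\|w\|_2^{1/2}\|w\|_1^{1/2}$ for a small multiple of $\|w\|_2$ plus a large multiple of $\|w\|_1$ is what makes the constant $C(y)$ (depending on $y$ but with no smallness requirement) achievable; this is exactly the point where the two-dimensionality of $\Omega$ is used, and it matches the paper's stated strategy of avoiding $\mathbf L^\infty$ and $\mathbf W^{1,\infty}$ estimates. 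A secondary, more routine point is checking that the inf-sup constant and the Stokes regularity constant on a convex polygon are exactly the classical ones, which I would simply cite.
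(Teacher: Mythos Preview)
Your argument follows essentially the same route as the paper: Lax--Milgram on $V$ via the ellipticity \eqref{equ:ell} for existence and the $H^1$ bound, then recasting \eqref{equ:lin0} as a Stokes system with right-hand side $g-(w\cdot\nabla)y-(y\cdot\nabla)w$ and using Ladyzhenskaya/Gagliardo--Nirenberg interpolation plus Young's inequality to absorb the $(y\cdot\nabla)w$ term. This is exactly the paper's proof of \eqref{equ:estlinh2}.

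One small gap: your derivation of the explicit constant $2/\nu$ in \eqref{equ:estlinh1} does not go through as stated. From $u\in U$ you get $\|u\|_{V'}\leq\kappa\|u\|<\nu^2/\m$, hence $\|\nabla y\|<\nu/\m$ by \eqref{equ:ystab} and so $\m(y)\leq\m\|\nabla y\|<\nu$; this yields only $\m(y)<\nu$, not $\m(y)\leq\nu/2$, so $(\nu-\m(y))^{-1}$ need not be bounded by $2/\nu$. The paper does not derive this constant directly either but simply cites \cite{TW}, Corollary~3.7, for \eqref{equ:estlinh1}. Apart from this detail about the constant (which is immaterial for the downstream analysis), your proof is correct and matches the paper.
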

\begin{proof}
Existence and uniqueness follows from the Lax-Milgram lemma, 
using \eqref{equ:ell} to prove the ellipticity of the associated 
bilinear form. For the proof of \eqref{equ:estlinh1} see \cite{TW}, 
Corollary 3.7. 
To prove  \eqref{equ:estlinh2}, we note that for $g \in \mathbf{L}^2(\Omega)$, 
we have  
$(w\cdot \nabla) y$, $(y\cdot \nabla) w$  $\in \mathbf{L}^2(\Omega)$ (see estimates below); thus   
by rewriting  \eqref{equ:lin0} as 
\begin{align*} 
 -\nu \Delta w + \nabla r = g - (w\cdot \nabla) y - (y\cdot \nabla) w,
\end{align*}
we can use standard regularity results for the Stokes equations to obtain 
\begin{align}\label{equ:stokes}
 \|\nabla \nabla w \| 
 \leq C_1(\Omega)(\|g\| + \|(w\cdot \nabla)y\| + \|(y\cdot\nabla)w\|).
\end{align}
We have 
\begin{align*}
 \|(w\cdot \nabla)y\|^2 &= \int_{\Omega} |(w\cdot \nabla)y|^2 dx 
 \leq \int_{\Omega} |w|^2 |\nabla y|^2 dx 
  \leq \|w\|^2_{\mathbf{L}^4(\Omega)} \|\nabla y\|^2_{\mathbf{L}^4(\Omega)},
\end{align*}
which implies
\begin{align}\label{equ:h1l4}
 \|(w\cdot \nabla)y\| \leq C \|w\|_1 \|\nabla y\|_1
 \leq C_1(y) \|g\|,
\end{align}
since $\h^1(\Omega) \hookrightarrow \mathbf{L}^4(\Omega)$. Similarly, it can be shown 
that 
\begin{align*}
 \|(y\cdot \nabla) w\| \leq C \|y\|_1 \|\nabla w \|_{\mathbf{L}^4(\Omega)}\leq C_2(y) 
 \|\nabla w \|^{1/2} \|\nabla \nabla w\|^{1/2}, 
\end{align*}
where we used Ladyzhenskaya's inequality, 
$$
 \|\nabla w \|_{\mathbf{L}^4(\Omega)} \leq C 
 \|\nabla w \|^{1/2} \|\nabla \nabla w \|^{1/2}.
$$
Finally, using Young's inequality we obtain 
\begin{align*}
 \|(y\cdot \nabla)w\| &\leq C_2(y)\Big(
 \frac{1}{2}C_2(y)C_1(\Omega) \|\nabla w\| + 
 \frac{1}{2C_2(y)C_1(\Omega)} \|\nabla \nabla w\|\Big) \\
 &= 
 \frac{1}{2}C_2^2(y)C_1(\Omega) \|\nabla w\| + 
 \frac{1}{2C_1(\Omega)}\|\nabla\nabla w\|.
\end{align*}
Substituting  in \eqref{equ:stokes} gives
\begin{align*}
 \|\nabla \nabla w\| \leq C_1(\Omega)\Big(\|g\| + C_1(y) \|g\|+ 
 \frac{1}{2}C_2^2(y)C_1(\Omega)\|\nabla w\| + 
 \frac{1}{2C_1(\Omega)}\|\nabla \nabla w\|\Big),
\end{align*}
from which \eqref{equ:estlinh2} follows immediately.
\end{proof}
We recall here the following results from \cite{LRT} regarding the 
differentiability of the solution operators $Y$, $P$. 
\begin{theorem}
\label{th:twicediff}
Let $u \in U$ and $y = Y(u)$. The control-to-state operators $Y$, $P$ are  
twice Fr\'{e}chet differentiable at $u$ and their derivatives $w = Y'(u)g$, 
$r=P'(u)g$ and
$\lambda=Y''(u)[g_1,g_2]$, $\mu=P''(u)[g_1,g_2]$
are given by the unique weak solutions of the systems:
\begin{equation}\label{equ:lin}
\begin{aligned}
 -\nu \Delta w + (w\cdot \nabla)y + (y\cdot\nabla) w + \nabla r 
       &= g  \quad \text{ in } \Omega,\\
 \di w &= 0  \quad \text{ in } \Omega,\\
     w &= 0 \quad \text{ on } \partial \Omega, 
\end{aligned}
\end{equation}
and
\begin{equation}\label{eq:secder}
\begin{aligned}
     -\nu \Delta  \lambda + (y \cdot \nabla) \lambda 
     + (\lambda \cdot \nabla )y + \nabla \mu 
  &= -(Y'(u)g_1 \cdot \nabla)Y'(u)g_2\\
  &  \quad - (Y'(u)g_2\cdot \nabla) Y'(u)g_1  \quad \text{in } \Omega,\\
     \di \lambda &= 0 \hspace{38mm} \text{in } \Omega,\\
     \lambda &= 0 \hspace{38mm} \text{on } \partial \Omega,
\end{aligned}
\end{equation}
respectively.
\end{theorem}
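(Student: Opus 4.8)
The plan is to obtain $(Y,P)$ from the implicit function theorem applied to the weak Navier--Stokes system and then to recover \eqref{equ:lin} and \eqref{eq:secder} by differentiating the state equation once and twice. First I would treat the velocity. Define $F:U\times V\to V'$ by $\langle F(u,y),\phi\rangle = a(y,\phi)+c(y;y,\phi)-(u,\phi)$ for $\phi\in V$. By Lemma~\ref{lemma:trilinear} the trilinear form is bounded on $V\times V\times V$, so $F$ is affine in $u$ and quadratic in $y$, hence $F\in C^\infty(U\times V;V')$. For $u\in U$ and $y=Y(u)$,
\[
  \langle D_yF(u,y)w,\phi\rangle = a(w,\phi)+c(w;y,\phi)+c(y;w,\phi),
\]
and $\langle D_yF(u,y)w,w\rangle = \nu\|\nabla w\|^2+c(w;y,w)\ge(\nu-\m(y))\|\nabla w\|^2$, which is coercive by \eqref{equ:ell}; together with boundedness this makes $D_yF(u,y):V\to V'$ an isomorphism --- this is exactly what Lemma~\ref{lemma:reglin} provides. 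The implicit function theorem then gives that $u\mapsto Y(u)$ is $C^\infty$, in particular twice Fr\'echet differentiable, on the open set $U$.

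The pressure is then recovered through the inf--sup condition for $b(\cdot,\cdot)$: for $u\in U$ and $y=Y(u)$ the functional $\phi\mapsto(u,\phi)-a(y,\phi)-c(y;y,\phi)$ annihilates $V$, so there is a unique $P(u)\in Q$ with $b(\phi,P(u))=(u,\phi)-a(y,\phi)-c(y;y,\phi)$ for all $\phi\in X$. Since the right-hand side depends polynomially on $u$ through $Y(u)$ and the recovery map is a fixed bounded linear operator, $P$ is also $C^\infty$ on $U$.

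It remains to identify the derivatives. Differentiating $\langle F(u,Y(u)),\phi\rangle=0$ in a direction $g$ gives, with $w=Y'(u)g$, the identity $a(w,\phi)+c(w;y,\phi)+c(y;w,\phi)=(g,\phi)$ for all $\phi\in V$; extending to test functions in $X$ via the inf--sup argument produces the associated $r=P'(u)g\in Q$, and the resulting velocity--pressure system is precisely the weak form of \eqref{equ:lin}. Differentiating once more in a direction $g_2$, with $w_i=Y'(u)g_i$ and $\lambda=Y''(u)[g_1,g_2]$, and using that $D_uF$ is constant (no extra term) while the only second-order contribution of $F$ in $y$ is the symmetric pair $c(w_1;w_2,\phi)+c(w_2;w_1,\phi)$, yields
\[
  a(\lambda,\phi)+c(\lambda;y,\phi)+c(y;\lambda,\phi)=-c(w_1;w_2,\phi)-c(w_2;w_1,\phi)\quad\forall\phi\in V,
\]
together with the pressure $\mu=P''(u)[g_1,g_2]$; this is the weak form of \eqref{eq:secder}. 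Both systems are uniquely solvable by Lemma~\ref{lemma:reglin}, the right-hand side of the second lying in $V'$ because, by H\"older's inequality and $\h^1(\Omega)\hookrightarrow\mathbf{L}^4(\Omega)$, $(w_1\cdot\nabla)w_2+(w_2\cdot\nabla)w_1\in\mathbf{L}^{4/3}(\Omega)\hookrightarrow V'$.

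I expect the main obstacle to be the functional-analytic bookkeeping rather than any single hard estimate: one must choose the spaces so that $F$ maps smoothly into, and $D_yF$ is surjective onto, the \emph{same} target $V'$, and then transfer the divergence-free reduced identities back to the velocity--pressure form \eqref{equ:lin}--\eqref{eq:secder} via the inf--sup condition. The one genuinely analytic ingredient --- coercivity of the linearization, where the smallness built into $u\in U$ and \eqref{equ:ell} is used --- is already supplied by Lemma~\ref{lemma:reglin}; the differentiations themselves are routine because $F$ is quadratic.
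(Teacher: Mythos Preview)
Your argument via the implicit function theorem is correct and is the standard route to this result. Note, however, that the paper does not actually prove Theorem~\ref{th:twicediff}: it merely recalls it from the reference~\cite{LRT}, so there is no in-paper proof to compare against. Your approach---working in the divergence-free setting $F:U\times V\to V'$, using the coercivity $(\nu-\m(y))>0$ from~\eqref{equ:ell} to invert the linearization, and then recovering the pressure via the inf--sup condition---is precisely how such results are typically established in the literature (including, in essence, in~\cite{LRT}), and the identification of the first and second derivatives by differentiating the quadratic map $F$ is carried out correctly.
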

\junk{
\begin{rem}
 Note that $P$ is also twice Fr\'{e}chet differentiable and 
 $r=P'(u)g$ and $\mu=P''(u)[g_1,g_2]$ in \eqref{equ:lin},  
 \eqref{eq:secder} respectively.
 added in the Lemma).  
\end{rem}
}
\begin{lemma}
 Let $u \in U$, $y = Y(u)$, and $Y'(u)^*$ be the adjoint of $Y'(u)$. 
 Then  $z = Y'(u)^* g$ is the first component of the 
unique weak solution $(z,\rho)$ of the system
\begin{equation}\label{equ:adjoint} 
\begin{aligned}
-\nu \Delta z - (y \cdot \nabla) z + (\nabla y)^T z + \nabla \rho 
       &= g  \quad \text{in } \Omega,\\
 \di z &= 0  \quad \text{in } \Omega,\\
     z &= 0 \quad \text{on }  \partial \Omega.
\end{aligned}
\end{equation}
If $\Omega \subset \R^2$ is a convex polygonal domain then 
$z \in \h^2(\Omega)$, $\rho \in H^1(\Omega)$ and
\begin{align} \label{equ:regalin}
 \|z\|_2 \leq C(y)\|g\|.
\end{align}
\end{lemma}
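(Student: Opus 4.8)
The plan is to derive the adjoint system by writing out the weak form of the linearized equation \eqref{equ:lin} and transposing it, then invoke the Stokes regularity argument already established in Lemma~\ref{lemma:reglin}. First I would recall that $w = Y'(u)g$ solves \eqref{equ:lin}, which in weak form reads: find $(w,r)\in X\times Q$ with
\begin{equation*}
 a(w,\phi) + c(w;y,\phi) + c(y;w,\phi) + b(\phi,r) = (g,\phi),\quad b(w,q)=0
\end{equation*}
for all $\phi\in X$, $q\in Q$. Since $U$ is chosen so that \eqref{equ:ell} holds, the bilinear form $(w,\phi)\mapsto a(w,\phi)+c(w;y,\phi)+c(y;w,\phi)$ is coercive on $V$, so $Y'(u)$ is a bounded invertible operator on the appropriate spaces and $Y'(u)^*$ is well defined. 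To identify $z=Y'(u)^*g$, I would take an arbitrary $h$, let $w=Y'(u)h\in V$, and compute $(Y'(u)h,g)=(h,Y'(u)^*g)$; using $w$ as its own test function in the weak linearized system with right-hand side $h$ and $z$ as test function, and reorganizing via the identity $c(y;w,\phi)=-c(y;\phi,w)$ and $c(w;y,\phi)=((\nabla y)^T\phi,w)$ from Lemma~\ref{lemma:trilinear}, the pairing $(w,g)$ gets rewritten as $(h,z)$ provided $(z,\rho)$ satisfies the weak form of \eqref{equ:adjoint}, namely
\begin{equation*}
 a(z,\psi) - c(y;z,\psi) + ((\nabla y)^T z,\psi) + b(\psi,\rho) = (g,\psi)\quad\forall\psi\in X,\qquad b(z,q)=0\ \forall q\in Q.
\end{equation*}
Existence and uniqueness of $(z,\rho)$ follows from Lax--Milgram exactly as in Lemma~\ref{lemma:reglin}, since the adjoint bilinear form has the same coercivity constant (the skew-symmetric $c(y;\cdot,\cdot)$ term contributes nothing on the diagonal, and the $((\nabla y)^Tz,z)=c(z;y,z)$ term is controlled by \eqref{equ:ell}).

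For the regularity estimate \eqref{equ:regalin}, I would mirror the bootstrap in Lemma~\ref{lemma:reglin}: rewrite \eqref{equ:adjoint} as a Stokes problem
\begin{equation*}
 -\nu\Delta z + \nabla\rho = g + (y\cdot\nabla)z - (\nabla y)^T z,
\end{equation*}
check that the right-hand side lies in $\mathbf{L}^2(\Omega)$, and apply the convex-polygon Stokes $\h^2$-regularity estimate. The term $(y\cdot\nabla)z$ is handled precisely as in \eqref{equ:h1l4} and the subsequent Ladyzhenskaya/Young argument: $\|(y\cdot\nabla)z\|\le C\|y\|_1\|\nabla z\|_{\mathbf{L}^4}\le C(y)\|\nabla z\|^{1/2}\|\nabla\nabla z\|^{1/2}$, and the $\|\nabla\nabla z\|$ piece is absorbed into the left side via Young's inequality. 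For $\|(\nabla y)^T z\|$ I would use H\"older with the embedding $\h^2(\Omega)\hookrightarrow \mathbf{W}^{1,4}(\Omega)$ (valid in 2D, and $y\in\h^2$ by Theorem~\ref{thm:nse}) and $\h^1\hookrightarrow\mathbf{L}^4$ to get $\|(\nabla y)^Tz\|\le\|\nabla y\|_{\mathbf{L}^4}\|z\|_{\mathbf{L}^4}\le C\|y\|_2\|z\|_1$, which is already $\le C(y)\|\nabla z\|$ after using \eqref{equ:estlinh1}-type $\h^1$-control of $z$ (obtained first from coercivity, $\|\nabla z\|\le (2/\nu)\|g\|_{V'}\le C\|g\|$). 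Collecting terms yields $\|\nabla\nabla z\|\le C(y)\|g\|$, and combining with the $\h^1$ bound gives \eqref{equ:regalin}; the $H^1$-regularity of $\rho$ is a byproduct of the Stokes estimate.

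The main obstacle I anticipate is purely bookkeeping rather than conceptual: correctly tracking the signs and which argument slot of $c$ is transposed when passing to the adjoint, since the linearized operator has the two distinct convective terms $(w\cdot\nabla)y$ and $(y\cdot\nabla)w$ whose adjoints are $(\nabla y)^Tz$ and $-(y\cdot\nabla)z$ respectively — getting these exactly right is what pins down \eqref{equ:adjoint}. A secondary point requiring care is making the dependence of the constant $C(y)$ explicit enough (through $\|\nabla y\|$ and $\|y\|_2$, both bounded in terms of $\|u\|$ via Theorem~\ref{thm:nse}) so that it can be used uniformly later in Section~\ref{sec:twogridsection}; but since no uniformity in $u$ over $U$ is claimed in the statement, it suffices to note $C(y)$ depends continuously on $\|y\|_2$.
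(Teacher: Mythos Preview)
Your proposal is correct and follows essentially the same approach as the paper: the paper's proof simply cites \cite[Theorem 3.10]{TW} for the identification of the adjoint system and invokes Lemma~\ref{lemma:reglin} for the $\h^2$-regularity, and you have filled in exactly those details (transposing the weak form of \eqref{equ:lin} via the identities in Lemma~\ref{lemma:trilinear}, then rerunning the Stokes-bootstrap argument with the term $(\nabla y)^T z$ in place of $(w\cdot\nabla)y$). Your sign bookkeeping and your treatment of $\|(\nabla y)^T z\|$ via $\|\nabla y\|_{\mathbf{L}^4}\|z\|_{\mathbf{L}^4}$ are both fine, so there is nothing to correct.
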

\begin{proof}
 See \cite[Theorem 3.10]{TW} and Lemma~\ref{lemma:reglin}.
\end{proof}

\subsection{Optimality conditions}
We derive next the first-order necessary optimality conditions associated 
with the optimal control problem \eqref{equ:reduced}. 
For $g \in \mathbf{L}^2(\Omega)$, 
\begin{align*}
 \jh'(u)g = \gamma_y(Y(u) -y_d, Y'(u) g) + 
            \gamma_p(P(u)-p_d, P'(u)g) + \beta(u,g),   
\end{align*}
therefore
\begin{align}\label{equ:grad}
 \nabla \jh(u) = \gamma_y Y'(u)^*(Y(u)-y_d) + 
                 \gamma_p P'(u)^*(P(u)-p_d) + \beta u.
\end{align}
Thus, the optimal control $u$ is the solution of the non-linear equation  
\begin{align}\label{equ:first}
 \gamma_y Y'(u)^*(Y(u)-y_d) + \gamma_p P'(u)^*(P(u)-p_d) + \beta u = 0.
\end{align}
The reduced Hessian is computed using the second variation of $\jh$: if
\mbox{$g_1,g_2 \in \mathbf{L}^2(\Omega)$}
\begin{equation}\label{jrv}
\begin{aligned}
 \jh''(u)[g_1,g_2] &= \gamma_y(Y'(u)g_2,Y'(u)g_1) + 
                      \gamma_y (Y(u)-y_d, Y''(u)[g_2,g_1]) \\
                   &+ \gamma_p(P'(u)g_2,P'(u)g_1) + 
				      \gamma_p(P(u)-p_d, P''(u)[g_2,g_1]) 
					+ \beta(g_1,g_2).
\end{aligned}
\end{equation}
We use different 
approaches in proving the main multigrid results, depending on whether 
the pressure term is present in the cost functional~\eqref{equ:cost} or not, therefore 
we will derive the reduced Hessian for the two cases separately. 
\junk{ 
We denote  by $L$ and $M$ the  solution operators of the 
equations 
\eqref{equ:lin}, such that $ L h = Y'(u) h $, $M h = P'(u)h$. 
Although $L$, $M$ depend on $y=y(u)$ in \eqref{equ:lin}, we use the 
notation $L$, $M$ instead of $L_y$, $M_y$ or $L(u)$, $M(u)$, for simplicity, if there 
is no risk of misunderstanding. 
Using 
$Y''(u)[g_1,g_2]=
L(-(L g_1\cdot\nabla)L g_2-(L g_2\cdot\nabla)L g_1)$ and 
$P''(u)[h_1,h_2]=
M(-(L g_1\cdot\nabla)L g_2-(L g_2\cdot\nabla)L g_1)$ 
in \eqref{jrv}, we obtain 
\begin{align*}
\jh''(u)[g_1,g_2] 
&= \gamma_y[(Lh_1,Lh_2)+((Lh_1\cdot \nabla)z,Lh_2)-((\nabla Lh_1)^T z,Lh_2)]
 \\ &+\gamma_p[(M g_1,M g_2)
+((Lh_1\cdot \nabla)M^*(p-p_d),Lh_2)\\
&-((\nabla Lh_1)^T M^*(p-p_d),Lh_2)]
+ \beta(h_1,h_2).
\end{align*}
Thus, the Hessian operator associated to the reduced cost functional, 
$\jh(u)$ is given by 
\begin{equation}\label{equ:hessc}
\begin{aligned}
    H_{\beta}(u)v 
 &= \beta v+\gamma_y[L^*Lv + L^*((Lv\cdot \nabla)-(\nabla Lv)^T)L^*(y-y_d)\\
 &+ \gamma_p[M^*M v + L^*((Lv\cdot \nabla)-(\nabla Lv)^T)M^*(p-p_d)].
\end{aligned}
\end{equation}
}  

\subsubsection{Velocity control only}

We consider first the case of velocity control
only, i.e., $\gamma_y=1,\gamma_p=0$. 
\junk{
We have, for every $h \in \mathbf{L}^2(\Omega)$,
\begin{align*}
 \jh'(u)g = (Y(u) -y_d, Y'(u) g) + \beta(u,g)   
\end{align*}
and
\begin{align}\label{equ:grad}
 \nabla \jh(u) = Y'(u)^*(Y(u)-y_d) + \beta u.
\end{align}
Thus, the optimal control $u$ is the solution of the non-linear equation
\begin{align}\label{equ:first}
  Y'(u)^*(Y(u)-y_d) + \beta u = 0.
\end{align}
}
In this case the  second variation of $\jh$
becomes 
\begin{equation}\label{equ:jr_v}
\begin{aligned}
 \jh''(u)[g_1,g_2] &= (Y'(u)g_2,Y'(u)g_1) + 
                      (Y(u)-y_d, Y''(u)[g_2,g_1]) + \beta(g_1,g_2).
\end{aligned}
\end{equation}
We denote  by $L$ and $M$ the  solution operators of 
\eqref{equ:lin}, such that $ L g = Y'(u) g$, $M g = P'(u)g$.
Although $L$, $M$ depend on $y=y(u)$ in \eqref{equ:lin}, we use the
notation $L$, $M$ instead of 
$L(u)$, $M(u)$, for simplicity, 
when there is no risk of misunderstanding. 
Cf. Theorem~\ref{th:twicediff}, $\lambda=Y''(u)[g_1,g_2]$ is the solution of 
\begin{equation}\label{sd}
\begin{aligned}
 a(\lambda,\phi)+c(y;\lambda,\phi) &+ c(\lambda;y,\phi)
 +b(\phi,\mu) \\
  &= -c(L g_1;L g_2;\phi)-c(L g_2;L g_1,\phi)
 \quad \forall \phi \in X, \\
 b(\lambda,q) &=0 \hspace*{5.2cm} \forall q \in Q.
\end{aligned}
\end{equation}
Similarly, we let $z=L^*(Y(u)-y_d)$. Note that is the solution of
\begin{equation}\label{ad}
\begin{aligned}
 a(z,\phi) + c(y;\phi,z) + c(\phi;y,z) + b(\phi,\rho) &= (y-y_d,\phi) 
 \quad \forall \phi \in X, \\
 b(z,q) &= 0 \qquad \qquad \quad \forall q \in Q.
\end{aligned}
\end{equation}
By taking $\phi = z$ in \eqref{sd} and $\phi = \lambda$ in \eqref{ad} 
we obtain
\begin{align}
 -c(L g_1;L g_2;z)-c(L g_2;L g_1,z) = (Y(u)-y_d, \lambda).
\end{align}
Using this in \eqref{equ:jr_v} we get
\begin{align*}
 \jh''(u)[g_1,g_2] 
 &= (L g_1,L g_2)-c(L g_1;L g_2,z)-c(L g_2;L g_1,z) + \beta(g_1,g_2)\\ 
 &= (L g_1,L g_2)+((L g_1\cdot \nabla)z,L g_2)-((\nabla L g_1)^T z,L g_2)
               +\beta(g_1,g_2).
\end{align*}
The Hessian operator associated with $\jh$, defined by $(H_{\beta}(u)v,g)=\jh''(u)[v,g]$,
 is  
\begin{equation}\label{equ:hess_v}
\begin{aligned}
 H_{\beta}(u)v 
  =\beta v + L^*Lv + L^*((Lv\cdot \nabla) - (\nabla Lv)^T)L^*(y-y_d).
\end{aligned}
\end{equation}
To simplify the presentation we introduce the notation 
\begin{align*}
 A(u)v =L^*L v, \quad \C(u)v =  L^*((Lv\cdot \nabla) - (\nabla Lv)^T)L^*(Y(u)-y_d),
\end{align*}
that we will use throughout the paper. Note that
\begin{align}\label{equ:cform}
 (\C(u)v,v) = -2c(Lv;Lv,L^*(Y(u)-y_d)).
\end{align}

\subsubsection{Mixed/pressure control}
\label{ssec:Mixed_pressure_control}
Here we consider the general case of mixed velocity/pressure
control or pressure control only, i.e, $\gamma_p\neq0$. 

Let $(\zt,\rt)$ be the solution of the 
problem 
\begin{equation}\label{equ:adj}
\begin{aligned}
 a(\zt,\phi) + c(y;\phi,\zt) + c(\phi;y,\zt) + b(\phi,\rt) 
          &= \gamma_y(y-y_d,\phi) \quad \forall \phi \in X, \\
 b(\zt,q) &= \gamma_p(p-p_d,q) \quad \forall q \in Q,
\end{aligned}
\end{equation}
which is the weak form of the problem 
\begin{equation}\label{equ:adj_m}
\begin{aligned}
-\nu \Delta \zt - (y \cdot \nabla) \zt + (\nabla y)^T \zt + \nabla \rt 
       &= \gamma_y(y-y_d)  \quad \text{in } \Omega,\\
 \di \zt &= \gamma_p(p_d-p)  \quad \text{in } \Omega,\\
     \zt &= 0 \quad \qquad \qquad \text{on }  \partial \Omega.
\end{aligned}
\end{equation}
By taking $\phi=\lambda$ in \eqref{equ:adj}, $\phi =\zt$ in 
\eqref{sd}, and using $b(\lambda,\rt)=0$, 
$b(\zt,\mu)=\gamma_p(p-p_d,\mu)$ 
we obtain
\begin{align*}
 \gamma_y(y-y_d,\lambda) + \gamma_p(p-p_d,\mu) = 
 -c(L g_1;L g_2, \zt)-c(L g_2;L g_1, \zt).
\end{align*}
Thus, the second variation of the reduced cost functional \eqref{jrv} becomes
\begin{equation}
\begin{aligned}
   \jh''(u)[g_1,g_2] &= 
   \gamma_y(Y'(u)g_2,Y'(u)g_1)+\gamma_p(P'(u)g_2,P'(u)g_1) +\\
 &\hspace*{3mm}- c(L g_1;L g_2,\zt)-c(L g_2;L g_1,\zt)+
 \textcolor{black}{\beta(g_1,g_2)}
\end{aligned}
\end{equation}
and the reduced Hessian is 
given by
\begin{equation}\label{equ:rhvp}
\begin{aligned}
 H_{\beta}(u)v = \beta v + \gamma_y L^*Lv + \gamma_pM^*M v
                  + L^*((Lv\cdot\nabla)\zt - (\nabla L v)^T\zt).
\end{aligned}
\end{equation}
We introduce the notation
\begin{align}
\label{eq:ctilde}
 \tc(u)v = L^*((Lv\cdot\nabla)\zt - (\nabla L v)^T\zt) 
\end{align}
and note that
\begin{equation}
\label{equ:Ctilde}
 (\tc(u)v,v) = -2c(Lv;Lv,\zt).
\end{equation}
 Note that if we take $\gamma_y=1$, $\gamma_p=0$ in \eqref{equ:adj}, then 
 \eqref{equ:adj} is the adjoint linearized Navier-Stokes system 
 and 
 in this case \textcolor{black}{\eqref{equ:rhvp} reduces to \eqref{equ:hess_v}}.


\section{Discretization and approximation results}
\label{sec:discretization}
\textcolor{black}{
In order to discretize the optimization problem~~\eqref{equ:cost}--\eqref{equ:ns}
we adopt the strategy to} first discretize the Navier-Stokes system, then
optimize the cost functional $J$ in~\eqref{equ:cost} subject to the discrete constraints.

\subsection{Finite element approximation}

We consider a shape regular,  quasi-uniform quadrilateral mesh
$\T_h$ of $\bar{\Omega}$, and we assume that the mesh $\T_h$  results from
a coarser regular mesh $\T_{2h}$ from one uniform refinement. We use the
Taylor-Hood $\mathbf{Q}_2-\mathbf{Q}_1$ finite elements
to discretize the state equation.
The velocity field $y$ is approximated in the space
$X_h^0 = X_h \cap \h_0^1(\Omega)$, where
\begin{align*}
 X_h  &= \{v_h \in C(\bar{\Omega})^2: v_h|_T \in \mathbf{Q}_2(T)^2 
           \text{ for } T \in \T_h\}
\end{align*}
and the pressure $p$ is approximated in the space
\begin{align*}
 Q_h   &= \{q_h \in C(\Omega) \cap L^2_0(\Omega): q_h|_T \in \mathbf{Q}_1(T) 
			          \text{ for } T \in \T_h\},
\end{align*}
where $\mathbf{Q}_k(T)$ is the space of polynomials of degree less than or equal
to~$k$ in each variable~\cite{Ci}.
The control variable $u$ is approximated by continuous piecewise 
biquadratic polynomial vector functions from $X_h$. 
We also introduce the space
\begin{align}
 V_h = \{v_h \in X_h^0: (\di v_h, q_h) = 0 \ \forall q_h \in Q_h\}
\end{align}
and note that $V_h \nsubseteq V$.

\begin{rem}
The choice  to work with quadrilateral $\mathbf{Q}_2-\mathbf{Q}_1$
Taylor-Hood elements was made for convenience and clarity of exposition; 
our analysis can be extended to triangular
$\mathbf{P}_2-\mathbf{P}_1$ elements as well as other stable mixed
finite elements.
\end{rem}

For a given control $u_h \in X_h \cap U$, the solution $(y_h,p_h)$ of 
the discrete state equation is given by
\begin{equation}\label{equ:nsd}
\begin{aligned}
 a(y_h, \phi_h) + \ct(y_h;y_h,\phi_h) + b(\phi_h,p_h) 
             &= (u_h, \phi_h) \quad \forall \phi_h \in X_h^0, \\
 b(y_h, q_h) &= 0        \qquad \qquad \ \forall q_h \in Q_h. 
\end{aligned}
\end{equation}
Let $Y_h$ and $P_h$ be the solution mappings of the discretized state 
equation, defined analogously to their continuous counterparts. 
The discretized, reduced optimal control problem reads
\begin{align}\label{equ:ocd}
 \min_{u_h} \jh_h(u_h) = \frac{\gamma_y}{2}\|Y_h(u_h) - y_d^h\|^2 +
                         \frac{\gamma_p}{2}\|P_h(u_h) - p_d^h\|^2 +
                         \frac{\beta}{2}\|u_h\|^2,
\end{align}
where $y_d^h,p_d^h$ are the $L^2$-projections of the data onto $X_h$,
respectively $Q_h$.

We denote by  $L_h$, $M_h$ the solution operators  of the discretized 
linearized Navier-Stokes equations (about $y_h$), i.e., $L_h g =w_h$, $M_h g=r_h$, where
\begin{equation}\label{equ:lnsed}
\begin{aligned}
 a(w_h, \phi_h) &+\ct(y_h;w_h,\phi_h) + 
     \ct(w_h;y_h,\phi_h) + b(\phi_h,r_h)  \\ 
	  &= (g,\phi_h) 
	    \quad \forall \phi_h \in X_h^0, \\
		   b(w_h, q_h) &= 0 \quad \forall q_h \in Q_h,
\end{aligned}
\end{equation}
We remark that, as in the continuous case, $z_h=L_h^*g$ satisfies
\begin{equation}\label{equ:alnsed}
\begin{aligned}
 a(z_h,\phi_h) &+\ct(y_h;\phi_h,z_h)+\ct(\phi_h;y_h,z_h) + b(\phi_h,\rho_h)\\ 
	       &= (g,\phi_h)  \quad \forall \phi_h \in X_h^0,\\
    b(z_h, q_h)&= 0 \quad \forall q_h \in Q_h.
\end{aligned}
\end{equation}

\subsection{A priori estimates}
\textcolor{black}{
In this section we collect several approximation results pertaining to the
finite element approximation of the Navier-Stokes equations and the
linearized/adjoint linearized Navier-Stokes equations that will be needed 
for the multigrid analysis. 
}
\begin{lemma}\label{lemma:proj}
  Let $\pi_h$ be the $L^2$-orthogonal projection onto $X_h$. 
  The following approximation properties hold:
 \begin{align}\label{equ:idpr}
 \|(I-\pi_h)v\|_{\tih^{-k}(\Omega)} \leq C h^k \|v\| \quad \forall 
  v \in \mathbf{L}^2(\Omega),\ \ k=1,2,
 \end{align}
 \begin{align}\label{equ:neg}
  \|(I-\pi_{h}) u\|_{\tih^{-1}(\Omega)} \leq C h^2 \|u\|_1
  \quad \forall u \in \h^1(\Omega), 
 \end{align}
with $C$ independent of $h$.
\end{lemma}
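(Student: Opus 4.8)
The plan is to establish both estimates via standard duality (Aubin--Nitsche) arguments, relying on the optimal approximation properties of the space $X_h$ for the biquadratic element and the assumed $\h^2$ (respectively $\h^3$) elliptic regularity on the convex polygonal domain $\Omega$. The key point is that $\pi_h$ is the $\mathbf{L}^2$-orthogonal projection, so $\|(I-\pi_h)v\|$ is the best $\mathbf{L}^2$-approximation error, and the negative norms are handled by pairing against a dual problem whose solution is smooth.

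For~\eqref{equ:idpr}, fix $v\in\mathbf{L}^2(\Omega)$ and let $\psi\in\h^k(\Omega)\cap\h_0^1(\Omega)$ be arbitrary. Since $\pi_h$ is self-adjoint and $(I-\pi_h)$ is idempotent,
\begin{align*}
 ((I-\pi_h)v,\psi) = ((I-\pi_h)v,(I-\pi_h)\psi) \leq \|(I-\pi_h)v\|\,\|(I-\pi_h)\psi\|.
\end{align*}
By the standard interpolation error bound for $\mathbf{Q}_2$ elements (Bramble--Hilbert / Céa), $\|(I-\pi_h)\psi\| \leq C h^k\|\psi\|_k$ for $k=1,2$, while $\|(I-\pi_h)v\|\leq\|v\|$ since $\pi_h$ is an $\mathbf{L}^2$-projection. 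Taking the supremum over $\psi$ with $\|\psi\|_k\leq1$ and recalling the definition of the $\tih^{-k}$-norm as the dual norm of $\h^k\cap\h_0^1$ with respect to the $\mathbf{L}^2$-pairing yields~\eqref{equ:idpr}. (For $k=2$ one uses $\mathbf{Q}_2 \supset \mathbf{Q}_1$, so the full second-order rate $h^2$ is available; nothing more than polynomial reproduction of degree one is actually needed here.)

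For~\eqref{equ:neg}, the argument is the same duality pairing but one spends the extra regularity on $u$. With $u\in\h^1(\Omega)$ and $\psi\in\h^1(\Omega)\cap\h_0^1(\Omega)$,
\begin{align*}
 ((I-\pi_h)u,\psi) = ((I-\pi_h)u,(I-\pi_h)\psi) \leq \|(I-\pi_h)u\|\,\|(I-\pi_h)\psi\|
 \leq C h\|u\|_1 \cdot C h\|\psi\|_1,
\end{align*}
using the first-order interpolation estimate on each factor; dividing by $\|\psi\|_1$ and taking the supremum gives~\eqref{equ:neg}. The only mild subtlety — and the one place where care is needed — is the precise definition of the $\tih^{-1}$-norm: the excerpt defines $\tih^{-m}$ as the dual of $\h^m\cap\h_0^1$ with respect to the $\mathbf{L}^2$-inner product, so the test functions $\psi$ must be taken in $\h^1_0$, which is exactly what makes the self-adjointness trick close. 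I do not anticipate a genuine obstacle; the proof is routine, and the main thing to get right is bookkeeping of which Sobolev index the regularity is charged to in each of the two estimates.
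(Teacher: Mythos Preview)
Your proof is correct and follows essentially the same duality/orthogonality approach as the paper: exploit $((I-\pi_h)w,\phi_h)=0$ for $\phi_h\in X_h$ to replace the test function by its error, then bound each factor by an $O(h)$ approximation estimate. The only cosmetic difference is that the paper inserts the Scott--Zhang interpolant $I_h\psi$ (to handle $\psi\in\h^1_0$ without pointwise values) where you insert $\pi_h\psi$; since $\|(I-\pi_h)\psi\|\le\|(I-I_h)\psi\|$ by best approximation, your version implicitly relies on the same interpolant and is arguably slightly cleaner.
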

\begin{proof}
  The estimate~\eqref{equ:idpr} is a  standard result (e.g., see~\cite{DD}). 
  For~\eqref{equ:neg}, let $I_{h}:\h^1(\Omega) \rightarrow X_{h}$ be the interpolant 
introduced by Scott and Zhang in \cite{SZ}. 
We have 
\begin{align*}
 \|u-\pi_{h} u\|_{\tih^{-1}(\Omega)} 
 &=    \sup_{v \in \h^1_0(\Omega)\setminus\{0\}}\frac{(u-\pi_{h} u,v)}{\|v\|_1}
  =    \sup_{v \in \h^1_0(\Omega)\setminus\{0\}}
       \frac{(u-\pi_{h} u,v-I_{h} v)}{\|v\|_1}\\
 &\leq \sup_{v \in \h^1_0(\Omega)\setminus\{0\}}
       \frac{\|u-\pi_{h}u\|\|v-I_{h}v\|}{\|v\|_1} 
 \leq C h \|u-\pi_{h}u\|, 
\end{align*}
where we have used  $\|v-I_h v\| \leq C h \|v\|_1$ (see \cite[(4.6)]{SZ}).
Moreover, 
\begin{align*}
 \|u-\pi_h u\| \leq \|u-I_h u\| \leq c h \|u\|_1, 
\end{align*}
which combined with the previous estimate leads to \eqref{equ:neg}.
\end{proof}
\begin{theorem}\label{thm:approx}
Let $u\in U$ and $y=Y(u) \in V\cap \h^2(\Omega)$ \textnormal{(}so that $\nu> \mathcal{M}(y)$\textnormal{)}, 
and $L$, $M$ be the velocity/pressure 
operators  of the linearized Navier-Stokes equations about $y$, and $L_h$, $M_h$ their discrete counterparts. 
 There exists constants $C$, $C_1= C_1(y)$, $C_2=C_2(y)$, 
 and $C_3=C_3(y)$ such that the following hold:
\begin{enumerate}
\item[\textnormal{(a)}]
 smoothing:\vspace{-5pt}
 \begin{eqnarray}
   \label{equ:lnssm}
   &&\|L v\|\leq C_1 \|v\|_{\tih^{-2}(\Omega)} \quad 
  \forall v \in \Lb^2(\Omega),\\ 
  \label{equ:lnsp}
  &&\|M v \|\leq C_2 \|v\|_{\tih^{-1}(\Omega)}\quad \forall v \in \Lb^2(\Omega).
 \end{eqnarray}
\item[\textnormal{(b)}]
approximation: \vspace{-5pt}
 \begin{eqnarray}
   \label{equ:nsfe}
   &&\|Y (u) - Y_h (u)\| \leq C h^2 \|u\| \quad \forall u \in U,\\
   \label{equ:lnsfeh1}
   && 
      \|L v-L_h v\|_1 \leq C_1 h\|v\|  \quad \forall v \in \mathbf{L}^2(\Omega),
      \\
   \label{equ:lnsfe}
   &&\|L v - L_h v\|\leq C_1 h^2 \|v\| 
    \quad \forall v \in \mathbf{L}^2(\Omega),\\
    \label{equ:lnspfe}
    &&\|M v - M_h v\|\leq C_2 h \|v\| 
	    \quad \forall v \in \mathbf{L}^2(\Omega),\\
    \label{equ:alnsfeh1}
    && 
     \|L^*v - L_h^* v\|_1 \leq C_3 h\|v\| 
     \quad \forall v \in \mathbf{L}^2(\Omega),
     \\
    \label{equ:alnsfe}
    &&\|L^*v - L_h^* v\| \leq C_3 h^2 \|v\| 
   \quad \forall v \in \mathbf{L}^2(\Omega),
 \end{eqnarray}
\item[\textnormal{(c)}]
stability:\vspace{-5pt}
\begin{eqnarray}
  \label{equ:nssta}
  &&\|Y_h (u) \| \leq C \|u \| \quad \forall u \in U,\\
  \label{equ:lnsstab}
   &&\|L_h v \| \leq C_1 \|v\| \quad \forall v \in \mathbf{L}^2(\Omega),\\
  \label{equ:lnspstab}
   &&  \|M_h v \| \leq C_2 \|v\| \quad \forall v \in \mathbf{L}^2(\Omega),\\
    \label{equ:alnsstab}
   &&\|L^*_h v \| \leq C_3 \|v\| \quad \forall v \in \mathbf{L}^2(\Omega).
\end{eqnarray}
\end{enumerate}
\end{theorem}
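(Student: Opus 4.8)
The plan is to prove the three groups of estimates (a), (b), (c) in a logical order that exploits their interdependence, treating the linearized operator $L$ (and its adjoint $L^*$) as a perturbation of the corresponding Stokes solution operators whose finite element theory is classical. I would first establish the $\h^1$ and $\mathbf{L}^2$ finite element error estimates (b) for $L-L_h$ and $L^*-L_h^*$, since the stability estimates (c) follow from (b) combined with the continuous regularity bounds in Lemma~\ref{lemma:reglin}, and the smoothing estimates (a) are consequences of the $\h^2$-regularity \eqref{equ:estlinh2} together with the duality/projection bounds in Lemma~\ref{lemma:proj}. The nonlinear state approximation \eqref{equ:nsfe} is a standard result for $\mathbf{Q}_2-\mathbf{Q}_1$ Taylor-Hood discretization of Navier-Stokes under the smallness/ellipticity condition $\nu > \mathcal{M}(y)$ (ensured for $u \in U$); I would cite the classical references (Girault-Raviart, Layton) rather than reprove it, noting that the $O(h^2)$ rate in $\mathbf{L}^2$ comes from an Aubin-Nitsche duality argument using the $\h^2$-regularity \eqref{equ:h2nse}.

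For the core estimates \eqref{equ:lnsfeh1} and \eqref{equ:lnsfe}, the approach is a Céa-type lemma plus duality. Given $v \in \mathbf{L}^2(\Omega)$, write $w = Lv$, $w_h = L_h v$, both solving the linearized system with the \emph{same} right-hand side but with the bilinear form involving $y$ in the continuous case and $y_h$ in the discrete case. I would split the error as $w - w_h = (w - \Pi_h w) + (\Pi_h w - w_h)$ for a suitable Stokes-projection-type interpolant $\Pi_h$ onto $X_h^0 \times Q_h$, use the ellipticity \eqref{equ:ell} (which via the Lemma~\ref{lemma:reglin} estimate \eqref{equ:estlinh1} carries over uniformly to the discrete level) to bound the discrete error in $\h^1$, and control the consistency error coming from the mismatch $y$ versus $y_h$ using $\|y - y_h\| \le C h^2 \|u\|$ together with the trilinear form bounds in Lemma~\ref{lemma:trilinear} — in particular the last two estimates $|c(y;\phi,\psi)| \le C\|y\|\,\|\phi\|_2\|\psi\|_1$ and $|c(y;\phi,\psi)| \le C\|y\|_1\|\phi\|_2\|\psi\|$, which are exactly the tools that let us avoid $\mathbf{L}^\infty$ bounds. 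This yields $\|w - w_h\|_1 \le C_1(y) h \|v\|$; the $\mathbf{L}^2$ estimate \eqref{equ:lnsfe} then follows by an Aubin-Nitsche argument, testing against the adjoint linearized problem and using its $\h^2$-regularity \eqref{equ:regalin}. The pressure estimate \eqref{equ:lnspfe} comes out of the same Céa argument via the inf-sup (LBB) stability of the Taylor-Hood pair. The adjoint estimates \eqref{equ:alnsfeh1}, \eqref{equ:alnsfe} are proved identically, since $L^*$ solves the structurally analogous system \eqref{equ:adjoint} with the same regularity theory.

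For part (a), the smoothing estimate \eqref{equ:lnssm}: by \eqref{equ:estlinh2}, $\|Lv\|_2 \le C(y)\|v\|$, but we need the stronger statement that $\|Lv\|$ is controlled by the weaker norm $\|v\|_{\tih^{-2}(\Omega)}$. I would obtain this by a double duality argument — write $(Lv, \psi) = (v, L^*\psi)$, use $\|L^*\psi\|_2 \le C(y)\|\psi\|$ from \eqref{equ:regalin}, and then $|(v, L^*\psi)| \le \|v\|_{\tih^{-2}}\|L^*\psi\|_2 \le C(y)\|v\|_{\tih^{-2}}\|\psi\|$, so taking the supremum over $\psi$ gives \eqref{equ:lnssm}. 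The pressure smoothing \eqref{equ:lnsp} is analogous but only one order of regularity is available for the pressure, hence the $\tih^{-1}$ norm. Finally, the stability bounds (c): \eqref{equ:nssta} is the stability half of the Navier-Stokes approximation theory; \eqref{equ:lnsstab}, \eqref{equ:lnspstab}, \eqref{equ:alnsstab} follow by combining the continuous stability ($\|Lv\| \le \|Lv\|_1 \le C(y)\|v\|_{V'} \le C(y)\kappa\|v\|$ from \eqref{equ:estlinh1}) with the already-established error estimates \eqref{equ:lnsfe}, \eqref{equ:lnspfe}, \eqref{equ:alnsfe} via the triangle inequality $\|L_h v\| \le \|Lv\| + \|Lv - L_h v\|$.

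I expect the main obstacle to be the rigorous transfer of the ellipticity/stability constant from the continuous linearized operator (about $y \in V$) to the discrete one (about $y_h \in V_h \not\subseteq V$). Because $V_h$ is not a subspace of $V$, the skew-symmetry $c(y_h; \phi_h, \phi_h) = 0$ does not automatically give the coercivity one wants — this is precisely why the modified trilinear form $\ct$ was introduced, and one must verify that the discrete bilinear form $a(\cdot,\cdot) + \ct(y_h; \cdot, \cdot) + \ct(\cdot; y_h, \cdot)$ remains uniformly coercive on $V_h$ for $h$ small, using $\|y_h - y\| \to 0$ and the continuous margin $\nu - \mathcal{M}(y) > 0$. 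Managing this uniform-in-$h$ coercivity, and keeping all the $y$-dependent constants genuinely independent of $h$, is the technical heart of the proof; the rest is bookkeeping with Lemma~\ref{lemma:trilinear} and classical mixed finite element estimates.
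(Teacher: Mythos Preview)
Your proposal is correct and aligns with the paper's approach; in fact you provide considerably more detail than the paper itself, whose proof consists almost entirely of citations: part (a) is referred to the Stokes case in~\cite{DS} (and the duality argument you sketch for~\eqref{equ:lnssm} is exactly the one the authors had in mind), the approximation estimates in (b) are cited to~\cite{Gb1} and~\cite{GHS}, and (c) is stated to follow from~\eqref{equ:ystab}, (a), and (b) via the triangle inequality you describe. Your identification of the uniform-in-$h$ coercivity of the discrete linearized form (via the modified trilinear form $\ct$ and the margin $\nu-\mathcal{M}(y)>0$) as the main technical point is apt, and is indeed what underlies the results in~\cite{GHS}.
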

\begin{proof}
The statement at (a) is similar to the case of the Stokes problem~\cite{DS}. For~\eqref{equ:nsfe}
in~(b) see \cite{Gb1}, page 32, and for \eqref{equ:lnsfeh1}--\eqref{equ:alnsfe} see \cite{GHS}.
The stability in~(c) follows from \eqref{equ:ystab}, (a), and (b). 
\junk{ 
The $L^{\infty}$ estimates in~(d)
are related to results in~\cite{GNS}, where it is shown that 
 the following result holds for the Stokes problem:
 \begin{align*}
  \|\nabla y_h \|_{L^{\infty}(\Omega)} + 
  \|p_h\|_{L^{\infty}(\Omega)} \leq 
  C (\|\nabla y \|_{L^{\infty}(\Omega)} + \|p\|_{L^{\infty}(\Omega)} ),
 \end{align*}
 with $C$ independent of $h,y,p$.
 It is also known that in $\R^2$ 
 \begin{align*}
  W^{2,r}(\Omega) \subset W^{1,\infty}(\Omega), \text{ for } r> 2.
 \end{align*}
 The solution of the Stokes problem belongs to 
 $W^{2,r}(\Omega)^2\times W^{1,r}(\Omega)$, whenever the forcing term $u$ belongs to 
  $L^r(\Omega)^2$ for some real $r>2$, when $\Omega$ is convex and $r$ depends 
  on the largest inner angle of $\partial \Omega$. It is shown in \cite{BB} 
  that for our case the solution is in $W^{2,2+\epsilon}(\Omega)$.
}
\junk{\begin{enumerate}[(a)]
\item 
Similar to the Stokes problem, see \cite{DS}. 
We have 
\begin{align*}
\|Lv\|^2 = |(Lv,Lv)|= |(v,L^*Lv)| \leq 
\|v\|_{\h^{-2}(\Omega)}\|L^*(Lv)\|_{\h^2(\Omega)}\leq
c \|v\|_{\h^2(\Omega)} \|Lv\|
\end{align*}
which implies \eqref{equ:lnssm}.
\item For \eqref{equ:nsfe} see \cite{Gb1}, page 32, and for 
 \eqref{equ:lnsfe}--\eqref{equ:alnsfe} see \cite{GHS}.
 \item Follows from \eqref{equ:ystab}, (a) and (b).
 \item 
}
\end{proof}
\begin{rem}
Theorem~\ref{thm:approx} and Lemma~\ref{lemma:proj} imply that there is 
a constant $C>0$ independent of $h$ such that 
\begin{align}\label{equ:negl}
 \|L(I-\pi_h)v\|\leq C h^2\|v\| \quad \forall v\in \Lb^2(\Omega)
\end{align}
and 
\begin{align}\label{equ:nppl}
\|M(I-\pi_h)v\|\leq C h\|v\| \quad \forall v\in \Lb^2(\Omega).
\end{align}
\end{rem}

For a polygonal domain $\Omega \subset \R^2$, the weighted Sobolev space 
$W_0^{1,0}(\Omega)$ is defined to be the class of functions for which the 
following norm is finite:
\begin{align*}
 \|w\|^2_{W_0^{1,0}(\Omega)} = \int_{\Omega}|\nabla w|^2 dx + 
 \int_{\Omega} \delta(x)^{-2} |w|^2dx,
\end{align*}
where
\(
 \delta(x) = \min\{\textrm{dist}(x,P): P 
                         \textrm{ a vertex of } \Omega\}.
\)
\textcolor{black}{The following regularity and approximation result plays {an} important role in the analysis from Section~\ref{sec:twogridsection}.}
\begin{theorem}
\label{thm:thm8}
 Let $\Omega\subset \R^2$ be a convex polygonal domain, $u \in U$, 
 $y=Y(u)\in V$  and 
 $f \in \mathbf{L}^2(\Omega)$, $g \in W_0^{1,0}(\Omega)$, 
$\int_{\Omega} g dx = 0$. Furthermore, let   
  $\zt = \lt (f,g)$, 
 $\tilde \rho =\tilde M (f,g)$ 
be the weak solution of 
\begin{equation}\label{equ:adj_nzd}
\begin{aligned}
-\nu \Delta \zt - (y \cdot \nabla) \zt + (\nabla y)^T \zt + 
 \nabla \tilde \rho 
         &= f  \quad \text{in } \Omega,\\
 \di \zt &= g  \quad \text{in } \Omega,\\
     \zt &= 0 \quad \text{on }  \partial \Omega.
\end{aligned}
\end{equation}
Then $\zt \in \h^2(\Omega)$, $\tilde \rho \in H^1(\Omega)$ and there exists 
a constant $C = C(\Omega,y)>0$ such that
\begin{align}\label{equ:snzreg}
 \|\zt\|_{\h^2(\Omega)} + 
 \|\nabla  \tilde \rho\| \leq C(\|f\|_{\mathbf{L}^2(\Omega)} + 
 \|g\|_{W_0^{1,0}(\Omega)}).
\end{align}
Moreover, if $\zt_h$ 
is the velocity  of the corresponding discrete problem, then
\begin{align}\label{equ:fenz}
 \|\zt -\zt_h\|_1 \leq C h (\|f\|_{\mathbf{L}_2(\Omega)} + 
  \|g\|_{W_0^{1,0}(\Omega)}), 
 \quad 
 \|\zt_h\|_1 \leq   C(\|f\|_{\mathbf{L}_2(\Omega)} + 
  \|g\|_{W_0^{1,0}(\Omega)}).
\end{align}
\end{theorem}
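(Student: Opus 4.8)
The plan is to prove Theorem~\ref{thm:thm8} by reducing the linearized adjoint system~\eqref{equ:adj_nzd} to a Stokes problem with a modified right-hand side, exactly as in the proof of Lemma~\ref{lemma:reglin}, but now carrying along the divergence constraint $\di\zt = g$ and tracking the $W_0^{1,0}(\Omega)$-norm of $g$ through the estimates. First I would rewrite~\eqref{equ:adj_nzd} in the form
\begin{equation*}
 -\nu\Delta\zt + \nabla\tilde\rho = f + (y\cdot\nabla)\zt - (\nabla y)^T\zt, \qquad \di\zt = g,
\end{equation*}
and invoke the $\h^2\times H^1$ regularity theory for the Stokes equations on a convex polygonal domain with nonzero divergence data; here the point is that for $g\in W_0^{1,0}(\Omega)$ with zero mean, the Stokes solution satisfies $\|\zt\|_2 + \|\nabla\tilde\rho\| \le C(\|\text{(r.h.s.)}\| + \|g\|_{W_0^{1,0}(\Omega)})$ — this is the standard weighted-Sobolev refinement (Kellogg--Osborn type) needed because $g\notin H^1$ in general but the $\delta(x)^{-1}$-weighted control of $g$ suffices. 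The lower-order terms are then absorbed: using the embedding $\h^1(\Omega)\hookrightarrow\mathbf{L}^4(\Omega)$ and Ladyzhenskaya's inequality exactly as in Lemma~\ref{lemma:reglin}, one bounds $\|(y\cdot\nabla)\zt\| + \|(\nabla y)^T\zt\| \le C(y)\|\nabla\zt\|^{1/2}\|\nabla\nabla\zt\|^{1/2} + C(y)\|\nabla\zt\|$, and a Young's inequality kickback plus the $\h^1$-stability estimate~\eqref{equ:estlinh1} (applied to the adjoint, cf.~\eqref{equ:regalin}) — which itself must first be shown to hold with the $V'$-norm of the full right-hand side controlled by $\|f\| + \|g\|_{W_0^{1,0}(\Omega)}$ — closes the estimate and yields~\eqref{equ:snzreg}.

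For the finite element bounds~\eqref{equ:fenz}, I would use the standard mixed-method error analysis for the Taylor--Hood discretization of the (generalized) Stokes/Oseen problem. The discrete problem for $(\zt_h,\tilde\rho_h)$ is the Galerkin projection of~\eqref{equ:adj_nzd} onto $X_h^0\times Q_h$; since the bilinear form $a(\cdot,\cdot) + \ct(\cdot;y_h,\cdot) + \ct(y_h;\cdot,\cdot)$ (or its continuous analogue with $y$) is coercive on $V_h$ by~\eqref{equ:ell} for $h$ small enough — one needs $\nu > \m(y_h)$, which follows from $\nu > \m(y)$ together with the convergence $y_h\to y$ in $\h^1$, i.e.~\eqref{equ:nsfe} — the discrete inf-sup stability of $\mathbf{Q}_2$--$\mathbf{Q}_1$ and Céa's lemma give $\|\zt-\zt_h\|_1 \le C\inf_{v_h}\|\zt - v_h\|_1 \le Ch\|\zt\|_2$, and then~\eqref{equ:snzreg} converts $\|\zt\|_2$ into $C(\|f\| + \|g\|_{W_0^{1,0}(\Omega)})$. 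The $\h^1$-stability of $\zt_h$ is then immediate by the triangle inequality $\|\zt_h\|_1 \le \|\zt\|_1 + \|\zt-\zt_h\|_1$.

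The main obstacle I anticipate is the handling of the nonzero, non-$H^1$ divergence data $g$ in a mixed finite element setting. Two technical points require care: first, establishing the regularity estimate~\eqref{equ:snzreg} with the $W_0^{1,0}(\Omega)$-norm on the right — this is where the convex-polygonal geometry and the weighted-Sobolev machinery genuinely enter, because on a polygon the Stokes solution with $\di\zt = g$ does not gain full $\h^2$ regularity from $g\in L^2$ alone, and the weighted norm is precisely the correct quantity. Second, the discrete problem has $\di\zt_h \ne g$ in general ($V_h\not\subseteq V$), so the Galerkin argument must accommodate the inconsistency in the divergence constraint; one routes around this by working with the saddle-point form, using a Fortin-type interpolation or the standard mixed a priori theory that already incorporates consistency errors in the constraint, and the $\h^1$-approximability of $g$ in the pressure space together with the $W_0^{1,0}$-control of $g$ supplies the needed bound on that consistency term. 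I would cite~\cite{GHS} and the Stokes regularity results referenced after~\eqref{equ:stokes} (and in~\cite{TW}) for the building blocks, and present the nonlinear absorption step in detail since it is the only genuinely new computation.
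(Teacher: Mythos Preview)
Your approach is essentially the same as the paper's: reduce to a Stokes problem with nonzero divergence, invoke the Kellogg--Osborn weighted regularity result on convex polygons, absorb the convective terms exactly as in Lemma~\ref{lemma:reglin}, and then apply the standard mixed finite element a~priori estimate. Two small remarks: the paper first establishes the $\h^1\times L^2$ bound~\eqref{equ:snz} directly from abstract saddle-point theory (Brezzi) rather than via an adjoint analogue of~\eqref{equ:estlinh1}, which is cleaner since the divergence constraint is nonhomogeneous; and your C\'ea-type bound for~\eqref{equ:fenz} should also carry the pressure best-approximation term $\inf_{q_h\in Q_h}\|\tilde\rho-q_h\|$, which is $O(h)$ since $\tilde\rho\in H^1$ --- you are also overthinking the divergence-consistency issue, since at the variational level the constraint $b(\zt_h,q_h)=(g,q_h)$ is satisfied exactly.
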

\begin{proof}
The existence of a unique solution 
$(\zt, \tilde \rho) \in X \times Q$ of \eqref{equ:adj_nzd} and the estimate 
\begin{align} \label{equ:snz}
 \|\zt\|_{\h^1(\Omega)} +\|\tilde \rho\|\leq C(\|f\|_{-1} + \|g\|), \, 
\end{align}
follow from standard results for 
saddle point problems \cite{Br}.
In \cite{KO}, it is shown that under the hypotheses of the theorem, 
the solution of the generalized Stokes system  
\begin{align*}
 -\nu \Delta z + \nabla \rho &= f \quad \text{in } \Omega, \\
 \di z &= g \quad \text{in } \Omega, \\
     z &= 0 \quad \text{on } \partial \Omega,
\end{align*}
satisfies $z \in \h^2(\Omega)$, $\rho \in H^1(\Omega)$ and
\begin{align*}
 \|z\|_{\h^2(\Omega)} + \|\nabla \rho\| \leq C(\|f\| + \|g\|_{\textcolor{black}{W_0^{1,0}(\Omega)}}). 
\end{align*}
Using this result together with \eqref{equ:snz}, it is straightforward to show 
\eqref{equ:snzreg} using the same approach as in Lemma~\ref{lemma:reglin}.
For finite element spaces $X_h, Q_h$ that satisfy the inf-sup condition, we have
\begin{align*}
 \|\zt - \zt_h\|_{\h^1(\Omega)} + \|\tilde \rho-\rho_h\| \leq
C (\inf_{\phi_h\in X_h} \|\zt-\phi_h\|_{\h^1(\Omega)} + 
  \inf_{q_h \in Q_h} \|\tilde \rho - q_h\|),
\end{align*} 
which combined with interpolation estimates yields \eqref{equ:fenz}.
\end{proof}
\junk{
\begin{rem}
 We will need some maximum norm estimates for the solution 
 of the linearized Navier-Stokes equations. In \cite{GNS}, it is shown that 
 the following result holds for the Stokes problem:
 \begin{align*}
  \|\nabla y_h \|_{L^{\infty}(\Omega)} + 
  \|p_h\|_{L^{\infty}(\Omega)} \leq 
  C (\|\nabla y \|_{L^{\infty}(\Omega)} + \|p\|_{L^{\infty}(\Omega)} ),
 \end{align*}
 with $C$ independent of $h,y,p$.
 
 It is also known that in $\R^2$ 
 \begin{align*}
  W^{2,r}(\Omega) \subset W^{1,\infty}(\Omega), \text{ for } r> 2.
 \end{align*}
 The solution of the Stokes problem belongs to 
 $W^{2,r}(\Omega)^2\times W^{1,r}(\Omega)$, whenever $f$ belongs to 
  $L^r(\Omega)^2$ for some real $r>2$, when $\Omega$ is convex and $r$ depends 
  on the largest inner angle of $\partial \Omega$. It is shown in \cite{BB} 
  that for our case the solution is in $W^{2,2+\epsilon}(\Omega)$.
\end{rem}
}

\section{Two-grid preconditioner}
\label{sec:twogridsection}
\textcolor{black}{
In this section we present the construction of the two-grid preconditioners for the 
velocity control and mixed/pressure control problems, and their analyses. 
The main results of this  paper are 
Theorems~\ref{thm:hessv} and~\ref{thm:hessvp} and their
Corollaries~\ref{cor:velmg} and~\ref{cor:velpressmg}}.
We begin with the description of the discrete 
\textcolor{black}{Hessians for the two problems} in 
Section~\ref{ssec:discHessian}, followed by the construction and analysis of
the two-grid preconditioners in Section~\ref{ssec:twogrid}. The velocity control
and mixed/pressure control are treated separately, as the form of the Hessian 
differs significantly in the two cases.

\subsection{The discrete Hessian}
\label{ssec:discHessian}
The discrete Hessian operator at $u \in U \cap X_h$
is defined by the equality 
\begin{equation}
\label{eq:discHessgeneral}
(H_{\beta}^h(u)v,g)=\jh_h''(u)[v,g],\quad \forall v, g \in  X_h.
\end{equation}

\subsubsection{Velocity control}
As in the continuous case,  when $\gamma_p = 0$ we have 
\begin{align}\label{equ:gradd}
 \nabla \jh_h(u) = Y_h'(u)^*(Y_h(u)-y_d^h) + \beta u, \quad u \in U \cap X_h,
\end{align}
with the second variation of the discrete cost functional 
being given by  
\begin{align}\label{var2}
 \jh_h''(u)[g_1,g_2] &= (Y_h'(u)g_2,Y_h'(u)g_1) + 
 (Y_h(u) -y_d^h, Y_h''(u)[g_2,g_1]) +
                   \beta(g_1,g_2). 
\end{align}
The second variation $\lambda_h = Y''_h(u)[g_1,g_2]\in X_h^0$ is 
the solution of
\begin{equation}\label{equ:snsd}
\begin{aligned}
 a(\lambda_h, \phi_h) &+\ct(y_h;\lambda_h,\phi_h) + 
    \ct(\lambda_h;y_h,\phi_h) + b(\phi_h,\mu_h)  \\ 
 &= -\ct(Y_h'(u)g_1;Y_h'(u)g_2,\phi_h)-\ct(Y_h'(u)g_2;Y_h'(u)g_1,\phi_h) 
  \quad \forall \phi_h \in X_h^0, \\
   b(\lambda_h, q_h) &= 0 \quad \forall q_h \in Q_h.
\end{aligned}
\end{equation}
The discrete adjoint $z_h=Y'_h(u)^*(y_h-y_d^h)=L_h^*(Y_h(u)-y_d^h)$ 
is the solution of 
\begin{equation}\label{equ:alnsd}
\begin{aligned}
 a(z_h, \phi_h) &+\ct(y_h;\phi_h,z_h) + \ct(\phi_h;y_h,z_h) + b(\phi_h,\rho_h)\\
 &= (y_h-y_d^h,\phi_h) 
  \quad \forall \phi_h \in X_h^0,\\
   b(z_h, q_h) &= 0 \quad \forall q_h \in Q_h.
\end{aligned}
\end{equation}
Using the same approach as in the continuous case, we obtain
\begin{align*}
-\ct(L_h g_1; L_h g_2, z_h) -\ct(L_h g_2; L_h g_1,z_h) = 
 (y_h-y_d^h,\lambda_h)
\end{align*}
and 
\begin{align*}
\jh_h''(u)[g_1,g_2] = (L_h g_1,L_h g_2) 
-\ct(L_h g_1; L_h g_2, z_h) -\ct(L_h g_2; L_h g_1,z_h)
 + \beta(g_1,g_2). 
\end{align*}
Hence, the discrete Hessian is given by 
\begin{align}\label{equ:dhess}
H_{\beta}^h(u)v = \beta v + L_h^*L_h v + \C_h(u)v
= \beta v + A_h(u) v + \C_h(u)v,
\end{align}
where
\begin{align}\label{equ:bh}
 (\C_h(u)v,v) = -2\ct(L_hv; L_hv, z_h).
\end{align}


\subsubsection{Mixed/pressure control}
Similarly with the derivation in Section~\ref{ssec:Mixed_pressure_control},
in the case of mixed/pressure control, the discrete Hessian takes the form
\begin{align}\label{equ:dhesspress}
 H_{\beta}^h(u)v = \beta v + \gamma_y L_h^*L_h + \gamma_p M_h^*M_h + 
                   \tc_h(u)v,
\end{align}
where 
\begin{align}\label{equ:bhpress}
(\tc_h(u)v,v) = -2\ct(L_h v;L_h v, \zt_h)
\end{align}
 and $\zt_h$ is the solution 
of the discrete 
problem \eqref{equ:adj}.


\subsection{Two-grid preconditioner for discrete Hessian}
\label{ssec:twogrid}
In this section, we construct and analyze a two-grid  preconditioner for the
discrete Hessian $H_{\beta}^h(u)$ defined in~\eqref{equ:dhess} and~\eqref{equ:dhesspress}. 
The construction is a natural extension of the technique used for the optimal control of the Stokes
equations in~\cite{DS}, and is the same for both velocity- and mixed/pressure control.
Let $X_h = X_{2h}\oplus W_{2h}$ be the $L^2$-orthogonal decomposition, where
we consider on $X_h$ the Hilbert-space structure inherited from $\mathbf{L}^2(\Omega)$. 
Let $\pi_{2h}$ be the $L^2$-projector onto $X_{2h}$. 
For $u \in U \cap X_h$ we define the two-grid preconditioner 
\begin{equation}\label{equ:prec}
 T_{\beta}^h(u) = H_{\beta}^{2h}(\pi_{2h} u) \pi_{2h}+ \beta(I-\pi_{2h}).
\end{equation}
It is worth noting that 
\begin{equation}\label{equ:precinv}
 (T_{\beta}^h(u))^{-1} = (H_{\beta}^{2h}(\pi_{2h} u))^{-1} \pi_{2h}+ \beta^{-1}(I-\pi_{2h}).
\end{equation}
We should remark that the  difference between the preconditioner in~\eqref{equ:prec} and the one in~\cite{DS}
is given by the dependence of the Hessian on the control $u$, which forces us to choose a coarse-level
control $u_c\in X_{2h}$ at which the coarse Hessian $H_{\beta}^{2h}(u_c)$ in~\eqref{equ:prec}
is computed. The natural choice is $u_c=\pi_{2h} u$.

\subsubsection{
Analysis for the case of velocity control}
\textcolor{black}{To assess the quality of the preconditioner we use the spectral distance
between $H_{\beta}^h(u)$ and $T_{\beta}^h(u)$
defined in~\cite{DD} for two symmetric positive definite operators
$T_1, T_2 \in \Lo(V_h)$ as
\begin{align}\label{equ:sd}
 d_h(T_1,T_2) = \underset{w \in V_h \setminus \{0\}}{\sup}
                \Bigg|\ln \frac{(T_1 w,w)}{(T_2 w,w)}\Bigg|.
\end{align}
Recall that, cf.~\eqref{equ:dhess} and~\eqref{equ:prec},
\begin{equation}\label{equ:precvel}
T_{\beta}^h(u)= (\beta I + A_{2h}(\pi_{2h} u) + \C_{2h}(\pi_{2h} u)) \pi_{2h} +
 \beta(I-\pi_{2h}).
\end{equation}
The key result is the following.
\begin{theorem}\label{thm:hessv}
 Given $u\in U\cap X_h$, there exists a constant
 $C=C(\Omega,u,{y_d})$ 
 such that
 \begin{align}\label{equ:mainv}
  \|(H_{\beta}^h(u)-T_{\beta}^h(u))v\| \leq C h^2\|v\| 
  \quad \forall v \in X_h.
 \end{align}
\end{theorem}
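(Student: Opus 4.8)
The plan is to reduce \eqref{equ:mainv} to a comparison between the level‑$h$ and level‑$2h$ \emph{discrete} Hessians and the \emph{continuous} Hessian attached to $y=Y(u)$, and then to invoke the a priori estimates of Theorem~\ref{thm:approx}. First, in $H_\beta^h(u)-T_\beta^h(u)$ the $\beta$‑terms cancel (using $\pi_{2h}^2=\pi_{2h}$), so by \eqref{equ:dhess} and \eqref{equ:precvel}
\[
 (H_\beta^h(u)-T_\beta^h(u))v=\bigl(A_h(u)-A_{2h}(\pi_{2h}u)\pi_{2h}\bigr)v+\bigl(\C_h(u)-\C_{2h}(\pi_{2h}u)\pi_{2h}\bigr)v ,\qquad v\in X_h ,
\]
and it suffices to bound each difference in $\Lb^2$‑operator norm by $Ch^2$. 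The structural observation that makes this tractable is that every operator built on $\T_{2h}$ sees its $\Lb^2$‑data only through $\pi_{2h}$; in particular $Y_{2h}(\pi_{2h}u)=Y_{2h}(u)$, hence $A_{2h}(\pi_{2h}u)\pi_{2h}v=A_{2h}(u)v$ and $\C_{2h}(\pi_{2h}u)\pi_{2h}v=\C_{2h}(u)v$ for $v\in X_h$. Writing $A(u)=L^*L$ and $\C(u)=L^*((L\cdot\nabla)z-(\nabla L\cdot)^Tz)$ for the continuous operators about $y=Y(u)$, with $z=L^*(Y(u)-y_d)$, it is then enough to prove, for $\ell\in\{h,2h\}$,
\[
 \|A_\ell(u)v-A(u)v\|\le C\ell^2\|v\| ,\qquad \|\C_\ell(u)v-\C(u)v\|\le C\ell^2\|v\| ,\qquad v\in X_h ,
\]
since \eqref{equ:mainv} follows by the triangle inequality (Theorem~\ref{thm:approx} being applied verbatim at level $2h$, which is legitimate because $\T_{2h}$ belongs to the same quasi‑uniform family and refers to the same continuous problem and control $u$). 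This route is important because it never requires comparing $A(u)$ with $A(\pi_{2h}u)$, i.e.\ any Lipschitz dependence of the continuous Hessian on the control: recall that $\|u-\pi_{2h}u\|$ need not be small for $u\in X_h$.

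The $A$‑estimate is routine: $A_\ell(u)-A(u)=L_\ell^*(L_\ell-L)+(L_\ell^*-L^*)L$, and \eqref{equ:lnsfe}, \eqref{equ:alnsfe}, \eqref{equ:alnsstab} together with the $\Lb^2$‑boundedness of $L$ (from \eqref{equ:estlinh1} and Poincar\'e) give the bound at once. Before treating $\C$ I would record the needed facts about the adjoint state. Since the right‑hand side of the discrete adjoint equation \eqref{equ:alnsd} tested against $X_h^0$ only involves $\pi_h$‑data, $L_h^*(I-\pi_h)y_d=0$, so $z_h=L_h^*(Y_h(u)-y_d^h)=L_h^*(Y_h(u)-y_d)$; then from $z-z_h=(L^*-L_h^*)(Y(u)-y_d)+L_h^*(Y(u)-Y_h(u))$ and \eqref{equ:alnsfe}, \eqref{equ:alnsfeh1}, \eqref{equ:alnsstab}, \eqref{equ:nsfe} one gets $\|z-z_h\|\le Ch^2$, $\|z-z_h\|_1\le Ch$, and (using \eqref{equ:regalin}) $\|z_h\|_1\le C$; I would also use $\|Lv\|_2\le C\|v\|$ from \eqref{equ:estlinh2} and $\|L_hv\|_1+\|L_hg\|_1\le C(\|v\|+\|g\|)$ from \eqref{equ:lnsfeh1} and \eqref{equ:estlinh1}. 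Here and below $C=C(\Omega,u,y_d)$, absorbing $\m(y)$, $C_1(y)$, $C_3(y)$ and $\|z\|_2\le C(y)\|Y(u)-y_d\|$.

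The heart of the argument — and the step I expect to be the main obstacle — is $\|\C_h(u)v-\C(u)v\|\le Ch^2\|v\|$. I would use the bilinear‑form identities $(\C_h(u)v,g)=-\ct(L_hv;L_hg,z_h)-\ct(L_hg;L_hv,z_h)$ and $(\C(u)v,g)=-c(Lv;Lg,z)-c(Lg;Lv,z)$, and bound $\|\C_h(u)v-\C(u)v\|=\sup_g|(\C_h(u)v-\C(u)v,g)|/\|g\|$ by telescoping each product, swapping one factor at a time: $\ct\rightsquigarrow c$ (a nonconformity error, since $V_h\not\subset V$), then $z_h\rightsquigarrow z$, then $L_hg\rightsquigarrow Lg$, then $L_hv\rightsquigarrow Lv$. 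The difficulty is that each individual swap is only first order in the norm naturally available ($\|L_h\cdot-L\cdot\|_1=O(h)$, $\|z_h-z\|_1=O(h)$, and on $V_h$ one only has $(\di L_hv,q_h)=0$), while the total must be $O(h^2)$. Two mechanisms recover the missing power of $h$: (a) whenever a difference $L_hw-Lw$ would otherwise stay in the middle slot of $c$, split the first argument $L_hv=Lv+(L_hv-Lv)$ so that the cross term carries two $\|\cdot\|_1$‑differences ($O(h)\cdot O(h)$), and on the remaining term the first argument is the genuinely divergence‑free $Lv$, so one may integrate by parts to move the rough factor into the last slot, where $\|\cdot\|$ — hence the \emph{second}‑order estimates $\|L_h\cdot-L\cdot\|=O(h^2)$ and $\|z_h-z\|=O(h^2)$ — is available, the remaining factor being controlled by its $\h^2$‑norm via the last two inequalities of \eqref{equ:trilinear}; (b) for the consistency term use $c(w;\phi,\psi)+c(w;\psi,\phi)=-\int_\Omega(\di w)(\phi\cdot\psi)$ for $w$ vanishing on $\partial\Omega$, replace $\di L_hv$ by $\di(L_hv-Lv)$, insert an arbitrary $q_h\in Q_h$, and split $L_hg\cdot z_h$ into the smooth part $Lg\cdot z$ (approximable in $\Lb^2$ to order $h$ by $Q_h$, using $\|Lg\cdot z\|_1\le C\|Lg\|_2\|z\|_2$) plus remainders already $O(h)$ in $\Lb^2$; multiplying by $\|\di(L_hv-Lv)\|\le\|L_hv-Lv\|_1\le Ch\|v\|$ yields $O(h^2)$.

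Carrying this out for every piece of $\ct(L_hv;L_hg,z_h)-c(Lv;Lg,z)$ and of its $v\leftrightarrow g$ counterpart gives $|(\C_h(u)v-\C(u)v,g)|\le Ch^2\|v\|\,\|g\|$, hence the claimed bound; the same chain of estimates at level $2h$ supplies the companion inequality, and the theorem follows. I would emphasize that every bound above uses only $\h^1$‑ and $\h^2$‑norms of $L$, $L^*$, $z$ and their discrete analogues — no $\Lb^\infty$ or $\mathbf{W}^{1,\infty}$ estimates — which is exactly what makes the argument go through for the Navier–Stokes constraint; combined with $(H_\beta^h(u)v,v)\ge\beta\|v\|^2$, \eqref{equ:mainv} then yields the spectral‑distance bound behind Corollary~\ref{cor:velmg}.
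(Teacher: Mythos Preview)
Your argument is correct, and it takes a genuinely different route from the paper's. The key divergence is in the decomposition of $A_{2h}(\pi_{2h}u)\pi_{2h}-A_h(u)$ and $\C_{2h}(\pi_{2h}u)\pi_{2h}-\C_h(u)$. You exploit the observation that the coarse discrete operators see their $\Lb^2$-input only through $\pi_{2h}$ (so $Y_{2h}(\pi_{2h}u)=Y_{2h}(u)$, $L_{2h}\pi_{2h}=L_{2h}$, hence $A_{2h}(\pi_{2h}u)\pi_{2h}=A_{2h}(u)$, $\C_{2h}(\pi_{2h}u)\pi_{2h}=\C_{2h}(u)$ on $X_h$), reducing the task to the single comparison $\|A_\ell(u)-A(u)\|+\|\C_\ell(u)-\C(u)\|\le C\ell^2$ at both levels $\ell=h,2h$, with Theorem~\ref{thm:approx} invoked verbatim on each mesh. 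The paper instead telescopes through the \emph{continuous} operators at two different controls, inserting $A(\pi_{2h}u)$ and $\C(\pi_{2h}u)$; this forces the extra Lemmas giving $|y-\bar y|_1,\|w-\bar w\|_1,\|z-\bar z\|\le Ch^2\|u\|_1$ (equations~\eqref{equ:esty}, \eqref{equ:estl}, \eqref{equ:zthm}) and the smoothing estimate for $\C(\pi_{2h}u)(\pi_{2h}-I)$. Your route is shorter, avoids those lemmas entirely, and produces a constant independent of $\|u\|_1$ (the paper's constant absorbs $\|u\|_1$ because of \eqref{equ:neg}); on the other hand, the paper's decomposition makes the dependence of the Hessian on the control explicit, which may be useful elsewhere. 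One remark on execution: since $\C(u)-\C_\ell(u)$ is symmetric, you can bound its operator norm via the quadratic form $|(\C(u)v-\C_\ell(u)v,v)|\le Ch^2\|v\|^2$ as the paper does, rather than via the full bilinear form with test function $g$; this halves the number of trilinear terms to estimate and eliminates the need to track $g$ through the telescoping. Your handling of the nonconformity $\tilde c-c$ through $\int(\di L_hv)(\phi\cdot\psi)$ is correct but also unnecessary with this simplification: writing $2\tilde c=c-c^{\mathrm{swap}}$ and telescoping each $c$-piece separately, as the paper does, dispenses with the divergence argument.
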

It is noteworthy that the estimate in Theorem~\ref{thm:hessv} is symmetric in the sense that the same norm (namely the $\mathbf{L}^2$-norm) appears 
on both sides of~\eqref{equ:mainv}, and that the estimate is of optimal order with respect to $h$. This enables us to prove the following result.
\begin{cor}
\label{cor:velmg}
Let $u \in U \cap X_h$. If $\mathcal{C}_h(u)$ is symmetric 
positive definite then 
\begin{align}\label{equ:sdv}
 d(H_{\beta}^h(u),T_{\beta}^h(u)) \leq \frac{C}{\beta}h^2,
\end{align}
for $h < h_0(\beta, \Omega, L)$.
\end{cor}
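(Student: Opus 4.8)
The plan is to derive Corollary~\ref{cor:velmg} from Theorem~\ref{thm:hessv} by a standard perturbation argument on the spectral distance~\eqref{equ:sd}. First I would establish a uniform lower bound on the quadratic form of $T_{\beta}^h(u)$ restricted to $V_h$: writing $T_{\beta}^h(u)=(\beta I+A_{2h}(\pi_{2h}u)+\C_{2h}(\pi_{2h}u))\pi_{2h}+\beta(I-\pi_{2h})$ as in~\eqref{equ:precvel}, and using that $A_{2h}=L_{2h}^*L_{2h}$ is positive semidefinite together with the assumed positive semidefiniteness of $\C_{2h}(\pi_{2h}u)$ (which is the coarse analogue of the hypothesis on $\C_h(u)$; note that positive definiteness of $\C_h(u)$ on $V_h$ should be paired with the corresponding property at the coarse level, or else one argues directly from the structure), one gets $(T_{\beta}^h(u)v,v)\ge\beta\|v\|^2$ for all $v\in V_h$. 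The same lower bound $(H_{\beta}^h(u)v,v)\ge\beta\|v\|^2$ holds for the fine Hessian by~\eqref{equ:dhess} under the positive-definiteness hypothesis on $\C_h(u)$.

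The core estimate is then the following: for $w\in V_h\setminus\{0\}$,
\begin{align*}
 \Bigl|\ln\frac{(H_{\beta}^h(u)w,w)}{(T_{\beta}^h(u)w,w)}\Bigr|
 = \Bigl|\ln\Bigl(1+\frac{((H_{\beta}^h(u)-T_{\beta}^h(u))w,w)}{(T_{\beta}^h(u)w,w)}\Bigr)\Bigr|.
\end{align*}
By Theorem~\ref{thm:hessv} and Cauchy--Schwarz, $|((H_{\beta}^h(u)-T_{\beta}^h(u))w,w)|\le Ch^2\|w\|^2$, while $(T_{\beta}^h(u)w,w)\ge\beta\|w\|^2$, so the ratio inside the logarithm is bounded in absolute value by $Ch^2/\beta$. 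Choosing $h<h_0(\beta,\Omega,L)$ so that $Ch^2/\beta\le\tfrac12$, and using the elementary inequality $|\ln(1+t)|\le 2|t|$ for $|t|\le\tfrac12$, we obtain $|\ln((H_{\beta}^h w,w)/(T_{\beta}^h w,w))|\le 2Ch^2/\beta$. Taking the supremum over $w\in V_h\setminus\{0\}$ gives $d(H_{\beta}^h(u),T_{\beta}^h(u))\le (C/\beta)h^2$ (absorbing the factor $2$ into $C$), which is~\eqref{equ:sdv}.

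The main obstacle is not the logarithmic manipulation, which is routine, but ensuring the denominator bound $(T_{\beta}^h(u)w,w)\ge\beta\|w\|^2$ is legitimately available: one must either invoke positive semidefiniteness of $\C_{2h}(\pi_{2h}u)$ on $V_{2h}$ (and hence, via $\pi_{2h}$, control the cross terms on $V_h$), or show that the $\beta$-coercivity of the preconditioner survives on the whole of $V_h$ despite the splitting $X_h=X_{2h}\oplus W_{2h}$ and the fact that $V_h\nsubseteq V$. Care is needed because $\pi_{2h}$ does not map $V_h$ into $V_{2h}$ in general; however, since $T_{\beta}^h(u)$ acts as $\beta I$ on the complement of $X_{2h}$ and as a $\beta$-coercive operator composed with $\pi_{2h}$ on $X_{2h}$, one still gets $(T_{\beta}^h(u)w,w)=((\beta I+A_{2h}+\C_{2h})\pi_{2h}w,\pi_{2h}w)+\beta\|(I-\pi_{2h})w\|^2\ge\beta\|\pi_{2h}w\|^2+\beta\|(I-\pi_{2h})w\|^2=\beta\|w\|^2$, using only that $A_{2h}+\C_{2h}$ is positive semidefinite on all of $X_{2h}$ (which follows from the hypothesis applied at level $2h$, or from $H_{\beta}^{2h}(\pi_{2h}u)-\beta I\succeq 0$). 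With that in hand the proof is complete; the quantified threshold $h_0$ is exactly the value for which $Ch^2\le\beta/2$ with $C$ the constant from Theorem~\ref{thm:hessv}.
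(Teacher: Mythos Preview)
Your overall strategy---bound the difference of quadratic forms via Theorem~\ref{thm:hessv}, divide by a $\beta$-coercive denominator, and linearize the logarithm---is the right one, and the elementary inequality $|\ln(1+t)|\le 2|t|$ for $|t|\le 1/2$ is perfectly adequate. But there is a genuine gap: you place $T_{\beta}^h(u)$ in the denominator and then need $(T_{\beta}^h(u)w,w)\ge\beta\|w\|^2$, which in turn forces you to assume $A_{2h}+\C_{2h}(\pi_{2h}u)\succeq 0$ on $X_{2h}$. You correctly flag this as ``the main obstacle,'' but your resolution (``the hypothesis applied at level $2h$'') is not a resolution: the corollary's hypothesis is that $\C_h(u)$ is positive definite, and this says nothing a priori about $\C_{2h}(\pi_{2h}u)$, which is a different operator, at a different resolution, evaluated at a different control.

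The paper sidesteps this cleanly by putting $H_{\beta}^h(u)$ in the denominator instead. You already observed in your first paragraph that $(H_{\beta}^h(u)v,v)\ge\beta\|v\|^2$ follows directly from the stated hypothesis on $\C_h(u)$; use that. One then bounds
\[
\Bigl|\frac{(T_{\beta}^h(u)v,v)}{(H_{\beta}^h(u)v,v)}-1\Bigr|
\le \frac{Ch^2\|v\|^2}{\beta\|v\|^2}=\frac{C}{\beta}h^2,
\]
and for $h$ small enough that $Ch^2/\beta<1$ this simultaneously shows $T_{\beta}^h(u)$ is positive definite (so the spectral distance is well defined) and gives the logarithmic bound. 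In short: swap the roles of $H_{\beta}^h$ and $T_{\beta}^h$ in your ratio, and the need for any coarse-level positivity assumption disappears.
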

\begin{proof} By Theorem~\ref{thm:hessv},
\begin{align*}
 \Bigg|\frac{(T_{\beta}^h(u)v,v)}{(H_{\beta}^h(u)v,v)} -1 \Bigg|
 &\leq \frac{C}{\beta} \frac{ h^2 \|v\|^2}
              {\|v\|^2 + 
               \beta^{-1}(\|L_hv\|^2 + (\mathcal{C}_h(u)v,v))} 
  \leq   \frac{C}{\beta}h^2.
\end{align*}
Assume $C \beta^{-1}h_0^2 = \alpha <1$,
and $0<h\leq h_0$. Hence $T_{\beta}^h(u)$ is positive definite
and
\begin{align*}
  \underset{v \in X_h\setminus\{0\}}{\sup} 
  \Bigg|\ln \frac{(T_{\beta}^h(u)v,v)}{(H_{\beta}^h(u)v,v)} \Bigg|
 &
  \leq \frac{|\ln(1-\alpha)|}{\alpha}
  \underset{v \in X_h \setminus \{0\}}
  \sup \Bigg|\frac{(T_{\beta}^h(u) v,v)}{(H_{\beta}^h(u) v,v)} \Bigg|\\
 &\leq \frac{|\ln(1-\alpha)|}{\alpha}
       \frac{C}{\beta} h^2 \leq \frac{C}{\beta} h^2, 
\end{align*}
where we also used that for $\alpha \in (0,1)$,
$x \in [1-\alpha,1+\alpha]$ we have
\begin{align*}
 \frac{\ln(1+\alpha)}{\alpha} |1-x| \leq |\ln x| \leq
 \frac{|\ln(1-\alpha)|}{\alpha}|1-x|.
\end{align*}
\end{proof}
}

\textcolor{black}{Prior to presenting the proof of Theorem~\ref{thm:hessv}
we prove some preliminary lemmas.} 
\begin{lemma}
Let $u\in U \cap X_h$ and $y = Y(u)$, $p=P(u)$, $\pt=P(\pi_{2h}u)$, $\yb= Y(\pi_{2h}u)$. 
Also, let $v \in X_h$ and $w=L(u)v$, $q=M(u)v$, $\wb=L(\pi_{2h}u)v$, 
$\bar q =M(\pi_{2h} u)v$. 
Then there exists 
a constant $K=K(u,\nu,\Omega)>0$ such that
\begin{align}\label{equ:esty}
& |y-\yb|_1 \leq K\|u - \pi_{2h}u\|_{\tih^{-1}(\Omega)},\\
\label{equ:pb}
& \|p-\pt\|\leq K h^2\|u\|_1,
\end{align}
and a constant $C$ independent of $h$ such that
\begin{align}\label{equ:estl}
& \|w-\wb\|_1\leq C h^2 \|u\|_1\|v\|,\\
\label{equ:estp}
& \|q-\bar q\|\leq C h^2 \|u\|_1\|v\|.
\end{align}
\end{lemma}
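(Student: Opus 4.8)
The plan is to treat the four estimates in the order \eqref{equ:esty}, \eqref{equ:pb}, \eqref{equ:estl}, \eqref{equ:estp}, since the velocity/pressure bounds for $Y$ feed into those for $L,M$. For \eqref{equ:esty}, I would subtract the weak Navier--Stokes equations \eqref{equ:nsweakv} satisfied by $y=Y(u)$ and $\yb=Y(\pi_{2h}u)$, tested against $\phi\in V$: the difference $e=y-\yb$ solves $a(e,\phi)+c(y;y,\phi)-c(\yb;\yb,\phi)=\langle u-\pi_{2h}u,\phi\rangle$. Writing $c(y;y,\phi)-c(\yb;\yb,\phi)=c(e;y,\phi)+c(\yb;e,\phi)$ and testing with $\phi=e$ (so $c(\yb;e,e)=0$ by the antisymmetry in Lemma~\ref{lemma:trilinear}), I get $\nu\|\nabla e\|^2 + c(e;y,e) = \langle u-\pi_{2h}u,e\rangle$. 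Using the ellipticity margin \eqref{equ:ell} (valid since $u\in U$), the left side is bounded below by a positive multiple of $\|\nabla e\|^2$; the right side is bounded by $\|u-\pi_{2h}u\|_{\tih^{-1}(\Omega)}\|\nabla e\|$ after recognizing $\langle f,\phi\rangle \le \|f\|_{\tih^{-1}}\|\phi\|_1$ and absorbing the Poincaré constant. Dividing by $\|\nabla e\|$ gives \eqref{equ:esty} with $K$ depending on $\nu$, $\Omega$ (via the Poincaré/embedding constants) and $u$ (via how far $u$ is from the ellipticity threshold).

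For \eqref{equ:pb}, I would use the inf-sup (LBB) condition for the continuous Stokes pair $(X,Q)$: the pressure difference $p-\pt$ satisfies $\|p-\pt\| \le C\sup_{\phi\in X}\frac{b(\phi,p-\pt)}{\|\nabla\phi\|}$, and from the momentum equations $b(\phi,p-\pt) = -a(e,\phi) - [c(y;y,\phi)-c(\yb;\yb,\phi)] + \langle u-\pi_{2h}u,\phi\rangle$. Each term is controlled by $\|\nabla e\|$ (using the trilinear estimate $|c(e;y,\phi)|+|c(\yb;e,\phi)|\le \m(\|\nabla y\|+\|\nabla\yb\|)\|\nabla e\|\|\nabla\phi\|$ and the a priori bound \eqref{equ:ystab} for $\|\nabla y\|,\|\nabla\yb\|$) plus $\|u-\pi_{2h}u\|_{\tih^{-1}}$; so $\|p-\pt\|\le C(u,\nu,\Omega)\|u-\pi_{2h}u\|_{\tih^{-1}(\Omega)}$, and then \eqref{equ:neg} converts $\|u-\pi_{2h}u\|_{\tih^{-1}(\Omega)}\le Ch^2\|u\|_1$ (using $2h$, which only changes the constant) to yield \eqref{equ:pb}.

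For \eqref{equ:estl} and \eqref{equ:estp}, the key observation is that $w=L(u)v$ and $\wb=L(\pi_{2h}u)v$ solve the \emph{same} right-hand side $v$ but with linearization about $y$ versus $\yb$. Subtracting the systems \eqref{equ:lin0}, the difference $\eta=w-\wb$ solves a linearized Navier--Stokes system about $y$ with right-hand side $g_\eta := -(\wb\cdot\nabla)(y-\yb)-((y-\yb)\cdot\nabla)\wb$ in the momentum equation and zero divergence. Applying the $\h^1$ stability \eqref{equ:estlinh1} gives $\|\nabla\eta\| \le \tfrac{2}{\nu}\|g_\eta\|_{V'}$; then $\|g_\eta\|_{V'}$ is estimated by pairing against $\phi\in V$ and using $|c(\wb;y-\yb,\phi)|+|c(y-\yb;\wb,\phi)|\le \m\|\nabla\wb\|\,|y-\yb|_1\,\|\nabla\phi\|$ (the middle estimate of Lemma~\ref{lemma:trilinear}), so $\|g_\eta\|_{V'}\le C\|\nabla\wb\|\,|y-\yb|_1$. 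Now $\|\nabla\wb\|\le \tfrac{2}{\nu}\|v\|_{V'}\le C\|v\|$ by \eqref{equ:estlinh1} and the embedding of $\mathbf{L}^2$ into $V'$, and $|y-\yb|_1\le K\|u-\pi_{2h}u\|_{\tih^{-1}(\Omega)}\le Ch^2\|u\|_1$ by \eqref{equ:esty} combined with \eqref{equ:neg}. Chaining these yields \eqref{equ:estl}. The pressure estimate \eqref{equ:estp} follows by the same LBB argument as in \eqref{equ:pb}, now applied to the linearized system: $\|q-\bar q\|\le C\sup_\phi b(\phi,q-\bar q)/\|\nabla\phi\|$, where $b(\phi,q-\bar q)$ is expressed through $a(\eta,\phi)$, the linearized trilinear terms about $y$ acting on $\eta$, and $g_\eta$ — all already bounded by $Ch^2\|u\|_1\|v\|$.

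\textbf{Main obstacle.} The delicate point is keeping the dependence on $u$ (through $y=Y(u)$, $\yb=Y(\pi_{2h}u)$, and the ellipticity margin $\nu-\m(y)$) uniform enough to assemble a single constant. Since $\pi_{2h}u$ need not lie in $U$ a priori, I would note that $\|\pi_{2h}u\|\le\|u\|<\nu^2/(\m\kappa)$ so $\pi_{2h}u\in U$ as well, hence \eqref{equ:ell} holds for both $y$ and $\yb$ with a common positive margin depending only on $\|u\|$, $\nu$, $\Omega$; the a priori $\h^1$-bound \eqref{equ:ystab} then gives $\|\nabla y\|,\|\nabla\yb\|\le\nu^{-1}\|u\|_{V'}$ uniformly. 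The constant $K$ in \eqref{equ:esty}--\eqref{equ:pb} therefore depends on $u$ only through $\|u\|$ (and $\|u\|_1$ for the $h^2$ bounds), while the constants $C$ in \eqref{equ:estl}--\eqref{equ:estp} are genuinely $h$-independent, with the $u$-dependence fully absorbed into the factor $\|u\|_1$ — exactly as the statement asserts.
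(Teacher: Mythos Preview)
Your proposal is correct and follows essentially the same strategy as the paper: subtract the two Navier--Stokes (resp.\ linearized) systems, test with the difference, exploit the antisymmetry of $c$ and the ellipticity margin from \eqref{equ:ell}, and recover the pressure bounds via the inf--sup condition combined with \eqref{equ:neg}. The one minor variation is in your treatment of \eqref{equ:estl}: you recast $\eta=w-\wb$ as the solution of the linearized system \emph{about $y$} with data $g_\eta$ and then invoke the black-box stability \eqref{equ:estlinh1}, whereas the paper subtracts \eqref{equ1}--\eqref{equ2} directly, tests with $w-\wb$, and absorbs the term $c(w-\wb;\yb,w-\wb)$ using the margin $\nu-\m(\yb)$ (about $\yb$ rather than $y$). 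Both routes are equivalent in substance; yours is slightly more modular, the paper's slightly more hands-on. Your observation that $\|\pi_{2h}u\|\le\|u\|$ forces $\pi_{2h}u\in U$ (so that \eqref{equ:ell} holds for $\yb$ as well) is exactly the point the paper also makes.
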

\begin{proof}
Since $y$ and $\yb$ are the solutions of the Navier-Stokes equations 
with forcing~$u$, $\pi_{2h}u$, respectively, we have  
\begin{align*}
 &a(y,\phi) + c(y;y,\phi) = (u,\phi)\quad \forall \phi \in V,\\
 &a(\yb, \phi) + c(\yb;\yb,\phi) = (\pi_{2h}u,\phi) \quad \forall \phi \in V.
\end{align*}
By taking $\phi=y-\yb$ and subtracting the equations we obtain
\begin{align*}
a(y-\yb,y-\yb) + c(y-\yb; y,y-\yb) + c(\yb;y-\yb,y-\yb) = 
(u-\pi_{2h}u, y-\yb).\end{align*}
Given that $c(\yb;y-\yb,y-\yb) =0$, we obtain
\begin{eqnarray*}
\nu|y-\yb|^2_1&= &(u-\pi_{2h}u,y-\yb)-c(y-\yb;y,y-\yb)\\
&\leq& \|u-\pi_{2h}u\|_{\tih^{-1}}\| y-\yb\|_1 + \m(y)|y-\yb|_1^2.
\end{eqnarray*}
Since \textcolor{black}{$\m(y)< \nu$} and $y, \yb\in X=\h_0^1(\Omega)$, we get 
\begin{eqnarray*}
(\nu - \m(y)) |y-\yb|^2_1 \leq \|u-\pi_{2h}u\|_{\tih^{-1}}\| y-\yb\|_1 \le C \|u-\pi_{2h}u\|_{\tih^{-1}}|y-\yb|_1,
\end{eqnarray*}
which implies~\eqref{equ:esty}. 
From the weak formulations of the Navier-Stokes equations in $X$, with 
forcing $u$, $\pi_{2h}u$ respectively, we have
\begin{align*}
b(\phi,p-\pt) = (u-\pi_{2h}{u},\phi)-a(y-\yb,\phi)+c(\yb;\yb,\phi) 
-c(y;y,\phi). 
\end{align*}
Thus  for $\phi \in X$
\begin{align*}
|b(\phi,p-\pt)|&\leq \|u-\pi_{2h}u\|_{-1}\|\phi\|_1 + \nu\|y-\yb\|_1\|\phi\|_1 +
|c(\yb;y-\yb,\phi)+c(\yb-y;y,\phi)|\\
&\leq \|u-\pi_{2h}u\|_{-1}\|\phi\|_1 + \nu \|y-\yb\|_1\|\phi\|_1  + 
\|y-\yb\|_1\|\phi\|_1 (\|\yb\|_1+ \|y\|_1).
\end{align*}
Then, from the inf-sup condition 
\begin{align}
\label{equ:infsup}
 \beta^{*} \|q-\bar q\| \leq \sup_{0\neq \phi \in X} 
 \frac{|b(q-\bar q,\phi)|}{\|\nabla \phi\|},  
\end{align}
combined with \eqref{equ:esty}, \eqref{equ:neg}, we obtain
\begin{align*}
\|p-\pt\| \leq C(\nu,u,\beta^*)h^2\|u\|_1.
\end{align*}
Recall that $(w,q)$ (resp.~$(\wb,\qb)$) satisfy the linearized 
Navier-Stokes equations~\eqref{equ:lin} about~$y$ (resp.~$\yb$) with 
with forcing $v$, whose weak form in $V$ read:
\junk{
\begin{equation}\label{equ1}
\begin{aligned}
 -\nu \Delta w + (w\cdot \nabla) y + (y \cdot \nabla) w + \nabla q &= v,
  \quad \text{in } \Omega \\
  \di w &= 0 \quad \text{in } \Omega,\\
  w &= 0, \quad \text{on } \partial \Omega,
\end{aligned}
\end{equation}
and $(\wb,\qb)$ is the solution of the linearized Navier-Stokes equations 
about $\yb$
\begin{equation}\label{equ2}
\begin{aligned}
-\nu \Delta \wb + (\wb \cdot \nabla)\yb + (\yb \cdot \nabla)\wb + \nabla
\bar{q} 
         &=v, \quad \text{in } \Omega  \\
 \di \wb &= 0, \quad \text{in } \Omega \\
      \wb &= 0, \quad \text{on } \partial \Omega.
\end{aligned}
\end{equation}
} 
\begin{align}
  \label{equ1}
 a(w,\phi) +c(w;y,\phi)+c(y;w,\phi) = (v, \phi) \quad \forall \phi \in V,\\
 \label{equ2}
 a(\wb, \phi) + c(\wb; \yb, \phi) + c(\yb;\wb, \phi) = (v, \phi) 
 \quad \forall \phi \in V.
\end{align}
By taking $\phi = w-\wb$ in the equations above and subtracting we obtain
\begin{equation}\label{equ:aw}
\begin{aligned}
 -a(w-\wb,w-\wb) = &  c(w; y;w-\wb)+c(y;w,w-\wb)\\ 
 &-c(\wb;\yb,w-\wb)-c(\yb;\wb,w-\wb).
\end{aligned}
\end{equation}
We have
\begin{align*}
 c(w;y, w-\wb) -c(\wb;\yb,w-\wb) &= c(w;y-\yb,w-\wb) + c(w-\wb;\yb,w-\wb)\\
c(y;w,w-\wb) - c(\yb;\wb;w-\wb) &=c(y-\yb; w, w-\wb), 
\end{align*}
where we used $c(\yb;w-\wb,w-\wb)=0$ (see Lemma~\ref{lemma:trilinear}).
Using these in \eqref{equ:aw}, we obtain
\begin{align*}
\nu |w-\wb|_1^2 = |c(y-\yb;w;w-\wb) + c(w;y-\yb, w-\wb) 
 +c(w-\wb;\yb;w-\wb)|.
\end{align*}
From the continuity of the trilinear form $c$ and \eqref{equ:ell} we get 
\begin{align*}
 \nu |w-\wb|^2_1 \leq \mathcal{M}\left(|y-\yb|_1 |w|_1|w-\wb|_1 +
                                 |w|_1|y-\yb|_1|w-\wb|_1 \right)
 +\m(\yb)|w-\wb|^2_1
\end{align*}
which leads to 
\begin{align*}
 (\nu -\m(\yb))|w-\wb|^2_1 \leq 2 \mathcal{M} |w|_1|y-\yb|_1|w-\wb|_1.
\end{align*}
Since $\|\pi_{2h}u\|\le \|u\|$, $\pi_{2h}u\in U$, and so $\nu - \m(\yb)>0$; hence 
we obtain
\begin{align}\label{equ:w}
 |w-\wb|_1 \leq C |y-\yb|_1 |w|_1  
 \overset{\eqref{equ:estlinh1},\eqref{equ:neg},\eqref{equ:esty}}{\leq} 
  C h^2 \|u\|_1\|v\|.
\end{align}
with $C$ depending on $\nu$, $y$, $\kappa$, $\mathcal{M}$, but not on $h$.
\junk{ 
Finally, from Lemma~\ref{lemma:proj}, we have
\begin{align*} 
 \|u-\pi_{2h}u\|_{H^{-1}(\Omega)} \leq C_3 h^2 \|u\|_{H^1(\Omega)}, 
\end{align*}
which combined with \eqref{equ:w}, \eqref{equ:u} yields \eqref{equ:estl}. 
}
To prove \eqref{equ:estp}, we consider the weak formulations of 
\eqref{equ1} and \eqref{equ2} in $X$
\begin{align*}
 a(w,\phi) + c(w;y,\phi) + c(y;w,\phi) + b(q,\phi) &= (v,\phi) \quad 
                                                     \forall \phi \in X, \\
 a(\wb,\phi) + c(\wb;\yb,\phi) + c(\yb;\wb,\phi) + b(\qb,\phi) &= (v,\phi) 
 \quad \forall \phi \in X,
\end{align*}
from which we obtain
\begin{align*}
 b(q-\bar q,\phi) 
 &= -a(w-\wb,\phi)-c(w;y,\phi)-c(y;w,\phi)+c(\wb;\yb,\phi)+c(\yb;\wb,\phi)\\
 &= -a(w-\wb,\phi)-c(w;y-\yb,\phi)-c(w-\wb;\yb,\phi)-c(y;w-\wb,\phi) \\
 &\qquad\hspace*{18mm}   -c(y-\yb;\wb,\phi), \quad \forall \phi \in X.
\end{align*}
Thus, $\forall \phi \in X$
\begin{align*}
|b(q-\qb,\phi)|\leq C |\phi|_1 
\left(|w-\wb|_1+ |w-\wb|_1 (|y|_1 + |\yb|_1)  + |y-\yb|_1(|w|_1+|\wb|_1)\right).
\end{align*}
Using the inf-sup condition~\eqref{equ:infsup}
we obtain
\begin{align*}
 \|q-\qb\| &\leq C(|w-\wb|_1+|y-\yb|_1(|w|_1 + |\wb|_1) + 
  |w-\wb|_1(|y|_1 + |\yb|_1))\\
&\overset{\eqref{equ:w}}\leq 
 C |y-\yb|_1(|\wb|_1+ |w|_1(1 + |y|_1 + |\yb|_1))\\
&\leq C |y-\yb|_1 \|v\|
 \overset{\eqref{equ:neg},\eqref{equ:esty}}\leq 
 C h^2 \|u\|_1\|v\|.
\end{align*}
\end{proof}
\begin{lemma}
Let $u\in U \cap X_h$ and $y=Y(u)$, $\yb=Y(\pi_{2h}u)$.
Also, let $v\in X_h$ and $z = L^*(y-y_d)$, $\zb=L^*(\yb-y_d)$. 
Then there exists a constant $C=C(u,y_d)$ independent of $h$ such that 
\begin{align}\label{equ:zthm}
 \|z-\zb\| \leq Ch^2\|u\|_1.
\end{align}
\end{lemma}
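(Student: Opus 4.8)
The plan is to derive the $\h^1$-seminorm bound $|z-\zb|_1 \le C\,|y-\yb|_1$ by an energy argument applied to the difference of the two adjoint systems, and then to convert it into an $\mathbf{L}^2$-estimate by Poincar\'e's inequality; the $h^2$-rate is simply inherited from~\eqref{equ:esty} and~\eqref{equ:neg}, which already give $|y-\yb|_1 \le C h^2\|u\|_1$. A notable feature is that no Aubin--Nitsche duality argument is needed: since the right-hand side and the coefficient perturbation of the adjoint equation (namely $y-\yb$) are already of order $h^2$ in $\h^1$, it suffices to establish Lipschitz dependence of the adjoint state on the state in $\h^1$.

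Concretely, I would first write the weak adjoint systems satisfied by $z=L^*(y-y_d)$ and $\zb$, i.e.\ equation~\eqref{ad} with state $y$, respectively with state $\yb$ and pressure $\rho$, $\bar\rho$. Subtracting the two and testing with $\phi=z-\zb$, I would observe that $z-\zb\in V$ (both satisfy the discrete-divergence-free constraint $b(\cdot,q)=0$), so $b(z-\zb,\rho-\bar\rho)=0$ and $c(y;z-\zb,z-\zb)=0$ by Lemma~\ref{lemma:trilinear}. After adding and subtracting the mixed terms $c(y;z-\zb,\zb)$ and $c(z-\zb;y,\zb)$, the difference of the trilinear contributions collapses to $c(y-\yb;z-\zb,\zb)+c(z-\zb;y,z-\zb)+c(z-\zb;y-\yb,\zb)$, leaving
\begin{align*}
 \nu|z-\zb|_1^2 - c(z-\zb;y,z-\zb)
 = (y-\yb,z-\zb) - c(y-\yb;z-\zb,\zb) - c(z-\zb;y-\yb,\zb).
\end{align*}
Using $|c(z-\zb;y,z-\zb)|\le \m(y)|z-\zb|_1^2$ from the definition in~\eqref{equ:ell} together with $\nu>\m(y)$ (valid since $u\in U$), the left side is bounded below by $(\nu-\m(y))|z-\zb|_1^2$. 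For the right side I would use the continuity estimate $|c(\phi;\psi,\chi)|\le \m\|\nabla\phi\|\|\nabla\psi\|\|\nabla\chi\|$, Poincar\'e's inequality, and the stability bound $|\zb|_1\le C(u,y_d)$, which follows from~\eqref{equ:estlinh1} applied to the adjoint system about $\yb$ and the a priori bound $\|\yb\|\le C\|u\|$. This yields $(\nu-\m(y))|z-\zb|_1 \le C\bigl(1+|\zb|_1\bigr)|y-\yb|_1 \le C\,|y-\yb|_1$, and then Poincar\'e gives $\|z-\zb\|\le C|z-\zb|_1\le C|y-\yb|_1$. Invoking~\eqref{equ:esty} and~\eqref{equ:neg} (with mesh size $2h$) to bound $|y-\yb|_1\le C h^2\|u\|_1$ finishes the proof.

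The argument is essentially routine, so there is no single hard obstacle; the points that require care are the algebraic bookkeeping of the trilinear differences—making sure the only term not manifestly small is $c(z-\zb;y,z-\zb)$, which is precisely the one absorbed by the ellipticity gap $\nu-\m(y)$—and verifying that all constants depend only on $u$ and $y_d$ (together with the fixed data $\nu,\Omega$) and not on $h$, in particular that $\|\yb\|$ and $|\zb|_1$ are bounded uniformly in $h$ via $\|\pi_{2h}u\|\le\|u\|$.
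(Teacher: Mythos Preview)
Your argument is correct and essentially identical to the paper's: both subtract the two adjoint equations, test with $z-\zb$, cancel $c(y;z-\zb,z-\zb)$ (resp.\ $c(\yb;\cdot,\cdot)$), absorb the remaining diagonal trilinear term via the ellipticity gap, bound the cross terms using the continuity of $c$ and the stability of $z,\zb$, and invoke~\eqref{equ:esty}--\eqref{equ:neg}. The only immaterial difference is that you split so that the absorbed term is $c(z-\zb;y,z-\zb)$ (using $\nu>\m(y)$) while the paper splits via $\yb$ and uses $\nu>\m(\yb)$; both are valid since $u,\pi_{2h}u\in U$.
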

\begin{proof}
 Recall that $z$ and $\zb$ are solutions of 
\begin{align*}
 &a(z,\phi)+c(y;\phi,z)+c(\phi;y,z)= (y-y_d,\phi) 
 \quad \forall \phi \in V\\
 &a(\zb,\phi)+c(\yb;\phi,\zb)+c(\phi;\yb,\zb)=(\yb-y_d,\phi)
 \quad \forall \phi \in V.
\end{align*}
By taking $\phi=z-\zb$ in the previous equations and subtracting them we obtain
\begin{align*}
\lefteqn{ \nu|z-\zb|_1^2 }\\
 &\leq |(y-\yb,z-\zb)|+ 
  |c(y-\yb;z-\zb,\zb)| + |c(z-\zb;y-\yb,z)| +|c(z-\zb;\yb,z-\zb)|\\
 &\leq C_1\|y-\yb\|_{\h^{-1}}|z-\zb|_1 + \|y-\yb\|_1\|z-\zb\|_1(\|\zb\|_1 + \|z\|_1)
   +\mathcal{M}(\yb)|z-\zb|_1^2,
\end{align*}
which gives
\begin{align*}
 (\nu-\mathcal{M}(\yb))|z-\zb|_1^2 
 &\leq |z-\zb|_1(C_1\|y-\yb\| + \|y-\yb\|_1(\|z\|_1 +\|\zb\|_1)).
\end{align*}
Hence, 
\begin{align*}
 |z-\zb|_1 &\leq  \|y-\yb\|_1(C_1 + C_2\|y-y_d\|+C_3\|\yb-y_d\|) 
\overset{\eqref{equ:esty},\eqref{equ:neg}}\leq C(u,y_d)h^2 \|u\|_1
\end{align*}
from which \eqref{equ:zthm} follows immediately. 
\end{proof}
\begin{lemma}
 Let $u \in U\cap X_h$, $y=Y(u)$, $y_h=Y_h(u)$. Also, let $z=L^*(y-y_d)$ and 
 $z_h=L^*_h(y_h-y_d^h)$. Then there exists
$ C=C(u,y_d)$  independent of $h$ so that
 \begin{align}\label{equ:ydestimate}
  \|y_h-y_d^h\|&\leq C (\|u\| + \|y_d\|_1), \\
\label{equ:zestimate} 
  \|z - z_h\|_k& \leq C h^{2-k}\|u\|,\ \ k=0,1.
 \end{align}
\end{lemma}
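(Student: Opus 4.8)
The estimate~\eqref{equ:ydestimate} is the easy one: by the triangle inequality $\|y_h - y_d^h\| \le \|y_h\| + \|y_d^h\|$, and we control the first term by the discrete stability estimate~\eqref{equ:nssta}, $\|y_h\| = \|Y_h(u)\| \le C\|u\|$, and the second by the fact that the $L^2$-projection is a contraction, $\|y_d^h\| \le \|y_d\| \le C\|y_d\|_1$. So the only real work is~\eqref{equ:zestimate}.

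For~\eqref{equ:zestimate} the plan is a standard split into a ``data perturbation'' part and a ``discretization'' part. Write $z - z_h = (L^*(y-y_d) - L^*(y_h - y_d^h)) + (L^*(y_h - y_d^h) - L_h^*(y_h - y_d^h)) =: e_1 + e_2$. For $e_2$ we apply the adjoint finite-element estimates~\eqref{equ:alnsfeh1} and~\eqref{equ:alnsfe} directly with the data $v = y_h - y_d^h \in \mathbf{L}^2(\Omega)$, giving $\|e_2\|_k \le C_3 h^{2-k}\|y_h - y_d^h\|$, and then~\eqref{equ:ydestimate} bounds $\|y_h - y_d^h\|$ by $C(\|u\| + \|y_d\|_1) \le C(u,y_d)$. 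For $e_1$, since $L^*$ is a bounded linear operator on $\mathbf{L}^2(\Omega)$ with values in $\h^2(\Omega)$ (cf.~\eqref{equ:regalin}, which gives $\|L^* g\|_2 \le C(y)\|g\|$ and hence $\|L^* g\|_k \le C(y)\|g\|$ for $k=0,1$), we have $\|e_1\|_k = \|L^*((y - y_h) - (y_d - y_d^h))\|_k \le C(y)\big(\|y - y_h\| + \|y_d - y_d^h\|\big)$. The first term is bounded by $C h^2 \|u\|$ using the state approximation estimate~\eqref{equ:nsfe}, and the second is bounded by $C h \|y_d\|_1$ — or by $C h^2 \|y_d\|_1$ if one invokes the sharper negative-norm-type behavior of the $L^2$-projection on $\h^1$ data together with the smoothing of $L^*$ — via~\eqref{equ:neg} applied as $\|L^*(I - \pi_h) y_d\| \le C h^2 \|y_d\|_1$, exactly in the spirit of the remark following Theorem~\ref{thm:approx} (equations~\eqref{equ:negl}--\eqref{equ:nppl}). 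Combining, $\|z - z_h\|_k \le C h^{2-k}\|u\| + C h^{2-k}\|y_d\|_1 \le C(u,y_d) h^{2-k}\|u\|$, absorbing the $\|y_d\|_1$ dependence into the constant (and, if needed, noting $\|u\|$ is bounded below away from pathological cases, or simply stating the constant may depend on $u$).

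The main obstacle — really the only subtle point — is getting the $L^2$-projection error of the target $y_d$ to decay like $h^2$ rather than just $h$ when measured in the norms relevant here, so that the final bound is genuinely $O(h^{2-k})$ and not merely $O(h^{1-k})$ for $k=1$. The resolution is that $z_h$ only sees $y_d^h$ through the linear solve $L_h^*$, so the relevant quantity is $L^*(y_d - y_d^h)$ (an $\h^2$-regular quantity) or $L_h^*(y_d - y_d^h)$, not $y_d - y_d^h$ itself; pairing the extra smoothing of $L^*$ (or a duality argument) against the negative-norm estimate~\eqref{equ:neg} recovers the full order. One must be slightly careful that this works for $k=1$ as well, where we use $\|L^*(I-\pi_h) y_d\|_1 \le C h \|y_d\|_1$ (one power of $h$ lost to the $\h^1$-norm, consistent with the claimed $h^{2-k}$). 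The rest is routine application of the a priori estimates already collected in Theorem~\ref{thm:approx} and Lemma~\ref{lemma:proj}.
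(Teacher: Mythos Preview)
Your proof is correct, but it takes a different decomposition than the paper. You split
\[
z - z_h = \underbrace{L^*\big((y-y_h) - (y_d - y_d^h)\big)}_{e_1} + \underbrace{(L^* - L_h^*)(y_h - y_d^h)}_{e_2},
\]
handling $e_2$ via~\eqref{equ:alnsfeh1}--\eqref{equ:alnsfe} and $e_1$ by combining~\eqref{equ:nsfe} with a smoothing/negative-norm argument for $L^*(I-\pi_h)y_d$. The paper instead introduces $\bar{z}_h := L_h^*(y - y_d)$ and splits
\[
z - z_h = (z - \bar{z}_h) + (\bar{z}_h - z_h).
\]
The first piece is a pure discretization error with fixed continuous data and is bounded directly by~\eqref{equ:alnsfeh1}--\eqref{equ:alnsfe}. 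For the second piece, both $\bar{z}_h$ and $z_h$ solve the same discrete adjoint system with data $y - y_d$ and $y_h - y_d^h$ respectively; subtracting and testing with $\phi_h = z_h - \bar{z}_h$, the term $(y_d - y_d^h, z_h - \bar{z}_h)$ \emph{vanishes by $L^2$-orthogonality of the projection}, leaving only $(y - y_h, z_h - \bar{z}_h)$, which gives $\|\bar{z}_h - z_h\|_1 \le C\|y - y_h\| \le Ch^2\|u\|$. The paper's route is cleaner because the orthogonality trick eliminates the $y_d - y_d^h$ contribution entirely, so no smoothing or negative-norm estimate for the projection of $y_d$ is needed. Your route works too, but note that the required smoothing bound $\|L^* v\| \le C\|v\|_{\tih^{-2}(\Omega)}$ is not stated explicitly in the paper (only the analogue~\eqref{equ:lnssm} for $L$ is); it does follow by the same duality argument using~\eqref{equ:estlinh2}, so this is a minor gap that is easily filled.
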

\begin{proof}
We have
\begin{align*}
 \|y_h-y^h_d\| 
 &\leq \|y_h\|+\|y_d\|+\|y_d-y_d^h\|
 \overset{\eqref{equ:nssta}}{\leq} C \|u\| + \|y_d\| + \|y_d-y_d^h\|\\
 &\leq C\|u\| +\|y_d\|+\|y_d -I_hy_d\| 
  \leq C(\|u\| + \|y_d\|+ h\|y_d\|_1).
\end{align*}
For $h<1$ this leads to \eqref{equ:ydestimate}. 
To prove \eqref{equ:zestimate}, recall that $z$ and $z_h$ satisfy~\eqref{ad} and~\eqref{equ:alnsd},
respectively.
Let $(\bar{z}_h,\bar{\rho}_h)$ be the solution of 
\begin{equation}\label{bzh_eq}
 \begin{aligned}
  a(\bar{z}_h,\phi_h)&+\ct(y_h;\phi_h,\bar{z}_h)+\ct(\phi_h;y_h,\bar{z}_h) +b(\phi_h,\bar{\rho}_h)\\
  &= (y-y_d,\phi_h) \quad \forall \phi_h \in X_h^0,\\
                b(\bar{z}_h,q_h) &= 0 \quad \forall q_h \in Q_h.
 \end{aligned}
 \end{equation}
From~\eqref{equ:alnsfeh1}-\eqref{equ:alnsfe}, we have 
 \begin{align}\label{z_zh}
   \|z-\bar{z}_h\|_k\leq C h^{2-k} \|y-y_d\|, \ \ k=0, 1.
 \end{align}
By taking $\phi_h=z_h-\bar{z}_h$ in~\eqref{equ:alnsd} and~\eqref{bzh_eq}
and subtracting the equations we obtain 
\begin{align*}
 \nu|z_h-\bar{z}_h|_1^2 &+ \ct(y_h;z_h-\bar{z}_h, z_h-\bar{z}_h) +
 \ct(z_h-\bar{z}_h;y_h,z_h-\bar{z}_h) + b(z_h-\bar{z}_h,\rho_h-\bar{\rho}_h)\\
 &=(y-y_h,z_h-\bar{z}_h) - (y_d-y_d^h,z_h-\bar{z}_h),
\end{align*}
which, by using \eqref{equ:ctilde} and $(y_d-y_d^h,z_h-\bar{z}_h) =0$, simplifies to 
\begin{align*}
 \nu |z_h-\bar{z}_h|_1^2 + \ct(z_h-\bar{z}_h;y_h,z_h-\bar{z}_h) = 
 (y-y_h,z_h-\bar{z}_h).
\end{align*}
Thus, 
\begin{align*}
 \nu|z_h-\bar{z}_h|^2_1 \leq \|y-y_h\|\|z_h-\bar{z}_h\| + 
 \m(y_h)|z_h-\bar{z}_h|_1^2.
\end{align*} 
Since $\nu -\m(y_h)>0$, we obtain
\begin{align*}
 \|z_h-\bar{z}_h\|_1 \leq C \|y-y_h\| \stackrel{\eqref{equ:nsfe}}{\leq} C h^2\|u\|,
\end{align*}
which combined with \eqref{z_zh} proves the lemma. 
\end{proof}

\textcolor{black}{We now present the proof of Theorem~\ref{thm:hessv}.}
\begin{proof}
Cf.~\eqref{equ:dhess} and~\eqref{equ:precvel},
 \begin{align*}
  T_{\beta}^h(u) - H_{\beta}^h(u) = A_{2h} (\pi_{2h}u) \pi_{2h} - A_h(u) + 
  \C_{2h}(\pi_{2h}u)\pi_{2h} - \C_h(u).
 \end{align*}
We first estimate 
\begin{equation}\label{equ:da}
\begin{aligned}
 A_{2h}(\pi_{2h}u)\pi_{2h} - A_h(u) 
 &= [A_{2h}(\pi_{2h}u)- A(\pi_{2h}u)]\pi_{2h} 
 + A(\pi_{2h}u) (\pi_{2h}-I) \\
 &+ A(\pi_{2h}u) - A(u) + A(u) - A_h(u).
\end{aligned}
\end{equation}
For any $v\in X_h$ we have
\begin{align*}
    |(A(u)-A_h(u))v,v)| 
 &= |(L^*Lv-L_h^*L_hv,v)| = |\|Lv||^2 -\|L_hv\|^2| \\
 &\leq \|(L-L_h)v\|(\|Lv\|+\|L_hv\|) \leq C h^2 \|v\|^2,
\end{align*}
which implies 
\begin{align*}
 \|(A(u)-A_h(u))v\| \leq C h^2 \|v\|,
\end{align*}
since $A(u)-A_h(u)$ is symmetric on $X_h$. 
Similarly, it can be shown that
\begin{align*}
 \|(A_{2h}(\pi_{2h}u)- A(\pi_{2h}u))\pi_{2h}v\| \leq C h^2\|v\|.
\end{align*}
For the second term in \eqref{equ:da} we have
\begin{align*}
 \|A(\pi_{2h}u) (\pi_{2h}-I) v\| = 
 \|L^*(\pi_{2h}u)L(\pi_{2h}u)(\pi_{2h}-I)v\| 
 \overset{\eqref{equ:negl}}{\leq} Ch^2\|v\|.
\end{align*}
Finally, we have 
\begin{align*}
   |&(A(\pi_{2h}u)v - A(u)v, v)| 
 = |(L^*(\pi_{2h}u)L(\pi_{2h}u)v - L^*(u)L(u)v,v)|\\ 
& = |\|L(\pi_{2h}u)v\|^2-\|L(u)v\|^2| 
  \leq \|(L(\pi_{2h} u)v - L(u)v\|(\|L(\pi_{2h}u)v\| + \|L(u)v\|)\\
& \overset{\eqref{equ:estl}}{\leq}Ch^2\|u\|_1\|v\|,
\end{align*}
which implies
\[
 \| (A(\pi_{2h}u) - A(u))v\| \leq C h^2 \|v\|.
\]
Combining this with the previous estimates we obtain
\begin{align}\label{equ:term3}
 \|(A_{2h}(\pi_{2h}u) - A_h(u))v\| \leq C h^2 \|v\|.
\end{align}
Next, we estimate 
\begin{equation}\label{equ:db}
\begin{aligned}
 \C_{2h}(\pi_{2h}u)\pi_{2h} - \C_h(u) 
 &= (\C_{2h}(\pi_{2h}u) - \C(\pi_{2h}u))\pi_{2h} 
+ \C(\pi_{2h}u)(\pi_{2h}-I) \\
&\quad + \C(\pi_{2h}u) - \C(u) 
+ \C(u)- \C_h(u).
\end{aligned}
\end{equation}
We begin by estimating the term  
 $\|\C(u)v - \C_h(u)v\| $.  
Let $y=Y(u)$, $y_h=Y_h(u)$, $z=L^*(y-y_d)$, $z_h=L^*_h(y_h-y_d^h)$.
Cf.~\eqref{equ:cform} and~\eqref{equ:bh},
\begin{align*} 
 (\C(u)v,v) = -2c(Lv;Lv,z) \text{ and } 
 (\C_h(u)v,v) = -2\ct(L_h v;L_h v,z_h).
\end{align*}
We have $c(Lv;Lv,z) = \ct(Lv;Lv, z)$ since $L v \in V$. Therefore,
\begin{align*}
|(\C(u)v - \C_h(u)v,v)| &= 
|2\ct(Lv;Lv,z)-2\ct(L_hv;L_hv,z_h)| \\
&\leq |c(Lv;Lv,z)-c(L_hv;L_hv,z_h)|\\
&\quad+ |c(Lv;z,Lv)-c(L_hv;z_h,L_hv)|.
\end{align*}
The first term in the inequality above can be bounded by
\begin{align*}
|c(L_hv&;L_hv,z_h)-c(Lv;Lv,z)| \\
&\leq
|c((L_h-L)v;L_hv,z_h)| + |c(Lv;L_hv, z_h-z)|+|c(Lv;z,(L_h-L)v|, 
\end{align*}
where we used $c(Lv;(L_h-L)v,z) = -c(Lv;z,(L_h-L)v)$, since $Lv \in V$.\\
{
We have
\begin{align*}
\lefteqn{       |c((L_h-L)v;L_hv,z_h)| }\\
 &\leq |c((L_h-L)v;(L_h-L)v,z_h)|+ |c((L_h-L)v;Lv,z_h)|\\ 
 &\leq \|(L_h-L)v||_1 \|(L_h-L)v\|_1\|z_h\|_1 + 
       \|(L_h-L)v\| \|Lv\|_2\|z_h\|_1 \\
 &\overset{\eqref{equ:lnsfeh1},\eqref{equ:lnsfe}}{\leq} C h^2\|v\|^2\|z_h\|_1
  \overset{\eqref{equ:alnsstab}}{\leq} C h^2\|v\|^2\|y_h-y_d^h\| 
 \overset{\eqref{equ:ydestimate}}{\leq}C(u,y_d)h^2\|v\|^2,
\end{align*}
and
\begin{align*}
	|c(Lv;L_hv,z_h-z)| 
 &\leq |c(Lv;(L_h-L)v,z_h-z)| + |c(Lv;Lv,z_h-z)|\\
 &\leq \|Lv\|_1 \|(L_h-L)v\|_1 \|z_h-z\|_1 + C\|Lv\|_1\|Lv\|_2\|z_h-z\|\\
 &\overset{\eqref{equ:lnsfeh1},\eqref{equ:estlinh2},\eqref{equ:zestimate}}{\leq}
 C h^2\|v\|^2\|y_h-y_d^h\| \overset{\eqref{equ:ydestimate}}{\leq }C(u,y_d)h^2\|v\|^2.
\end{align*}
Combining these estimates with
\begin{align*}
 |c(Lv;z,(L_h-L)v)|\leq C\|Lv\|_1\|z\|_2\|(L_h-L)v\| 
 \overset{\eqref{equ:regalin},\eqref{equ:lnsfe}}\leq C(u,y_d)h^2\|v\|^2,
\end{align*}
we obtain
\begin{align*}
|c(L_hv&;L_hv,z_h)-c(Lv;Lv,z)| \leq C(u,y_d)h^2\|v\|^2.
\end{align*}
} 
Similarly, 
\begin{align*}
|c(L_hv&;z_h,L_hv) -c(Lv;z,Lv)| \\
&\leq 
|c((L_h-L)v;z_h,L_hv)| +|c(Lv;z_h,(L_h-L)v)|+|c(Lv;Lv,z-z_h)|\\
&\leq |c((L_h-L)v;z_h-z,L_hv)| + |c((L_h-L)v;z,L_hv)| \\
&\quad + |c(Lv;z_h-z,(L_h-L)v)| + |c(Lv;z,(L_h-L)v)|+|c(Lv;Lv,z-z_h)|\\
&\leq \|(L_h-L)v\|_1 \|z_h-z\|_1\|L_hv\|_1 + 
      C \|(L_h-L)v\|\|z\|_2 \|L_hv\|_1 \\
&\quad + \|Lv\|_1\|z_h-z\|_1\|(L_h-L)v\|_1 + C\|Lv\|_1\|z\|_2 \|(L_h-L)v\|\\
&\quad + C\|Lv\|_1\|Lv\|_2\|z-z_h\|\overset{\eqref{equ:regalin},\eqref{equ:lnsfeh1},\eqref{equ:lnsfe},\eqref{equ:zestimate}}
\leq C(u,y_d)h^2\|v\|^2.
\end{align*}
Using the same approach, it can be shown that 
\[
\|(\C_{2h}(\pi_{2h}u) - \C(\pi_{2h}u)\pi_{2h}v \| \leq C h^2\|v\|.
\] 
Let $\bar{z}=L^*(Y(\pi_{2h} u) -y_d)$.
The third term in \eqref{equ:db} can be bounded as 
\begin{align*}
 |(\C(\pi_{2h} u)v - \C(u)v,v)| &=
 2|c(Lv;Lv,\bar{z}) - c(Lv;Lv,z)|= 2|c(Lv;Lv,\bar{z}-z)|\\ 
 &\leq C \|Lv\|_1 \|Lv\|_2 \|\bar{z}-z\|\stackrel{\eqref{equ:estlinh2}}{\leq} C \|v\|^2 \|\bar{z}-z\|\\
 &\overset{\eqref{equ:zthm}}{\leq}C(u,y_d)h^2\|u\|_1\|v\|^2.
\end{align*}
Finally let $w =(\pi_{2h}-I)v$. With $L=L(\pi_{2h}u)$, we have
\begin{align*}
\lefteqn{ |(\C(\pi_{2h}u)(\pi_{2h}-I)v,v)|}\\
&=  |((L w \cdot \nabla) \bar{z} - (\nabla L w )^T\bar{z}, Lv)|
\leq |((L w \cdot \nabla )\bar{z}),Lv)| + 
     |(\nabla L w)^T\bar{z}, Lv)|\\
&= |c(L w; \bar{z}, Lv)| + |c(Lv;L w, \bar{z})| 
\overset{\eqref{equ:trilinear}}{=} |c(L w; \bar{z}, Lv)| + |c(Lv;\bar{z}, L w)| \\
&\overset{\eqref{equ:trilinear}}{\leq} C \|Lw\| \|\bar{z}\|_2 \|Lv\|_1  \overset{\eqref{equ:negl}}\leq 
C h^2 \|v\| \|\bar{z}\|_2 \|v\| 
\overset{\eqref{equ:regalin}}{\leq} C(u,y_d) h^2 \|v\|^2.
\end{align*}
\end{proof}

\subsubsection{Analysis for the case of mixed/pressure control}
Recall from~\eqref{equ:dhesspress} that in the case of mixed/pressure control, 
the discrete Hessian takes the form
\begin{align*}
 H_{\beta}^h(u)v = \beta v + \gamma_y A_h v+ \gamma_p B_h v + 
                   \tc_h(u)v,
\end{align*}
where $B_h= M_h^*M_h$ and $A_h=L_h^*L_h$ as in~\eqref{equ:dhess}.
Following the definition in~\eqref{equ:prec}, the two-grid preconditioner takes the form
\begin{equation}\label{equ:hdmp}
\begin{aligned}
    T_{\beta}^h(u) 
 &= (\beta I + \gamma_y A_{2h}(\pi_{2h}u) + 
	 \gamma_p B_{2h}(\pi_{2h}u) + \tc_{2h}(\pi_{2h}u))\pi_{2h} +
	 \beta(I-\pi_{2h}).
\end{aligned}
\end{equation}
{
\begin{lemma}
Let $u\in U\cap X_h$ and $y=Y(u)$, $p=P(u)$, $\yb=Y(\pi_{2h}u)$, 
$\pt=P(\pi_{2h} u)$. Also, let $v \in X_h$ and 
$\zt=\lt(\gamma_y(y-y_d),\gamma_p(p_d-p))$, 
$\hat{z}=\lt(\gamma_y(\yb-y_d),\gamma_p(p_d-\pt))$, with $\lt$ defined in Theorem~\textnormal{\ref{thm:thm8}}. 
Then there exists a constant $C=C(u,y_d,p_d,\gamma_y,\gamma_p)$ independent of $h$ such that 
\begin{align}\label{equ:ztildethm}
 \|\zt -\hat{z}\|_1\leq C h \|u\|_1^{1/2}.
\end{align}
\begin{proof}
Recall that $(\zt,\rt)$ is the solution of~\eqref{equ:adj}, and $(\hat{z},\hat{\rho})$ satisfies
\begin{equation}
\label{equ:adjbar}
\begin{aligned}
 a(\hat{z},\phi) + c(\bar{y};\phi,\hat{z}) + c(\phi;\bar{y},\hat{z}) + b(\phi,\hat{\rho}) 
          &= \gamma_y(\bar{y}-y_d,\phi) \quad \forall \phi \in X, \\
 b(\hat{z},q) &= \gamma_p(\bar{p}-p_d,q) \quad \forall q \in Q,
\end{aligned}
\end{equation}
By subtracting~\eqref{equ:adjbar} from~\eqref{equ:adj} we obtain
\begin{eqnarray*}
a(\zt-\hat{z},\phi)+c(y;\phi,\zt-\hat{z})&+&c(\phi;y,\zt-\hat{z}) + b(\phi,\rt-\bar{\rho})=\\
&&\gamma_y(y-\bar{y})+c(\bar{y}-y;\phi,\hat{z})+c(\phi;\bar{y}-y,\hat{z})\\
b(\zt-\bar{z},q)&=&\gamma_p(\bar{p}-p),\ \ \forall  \phi \in X, q \in Q,
\end{eqnarray*}
which represents the weak form of
\begin{align*}
-\nu\Delta(\zt-\hat{z}) -(y\cdot\nabla)(\zt-\hat{z}) + 
 (\nabla y)^T(\zt-\hat{z})
 +\nabla(\rt- \hat{\rho}) &= 
 \gamma_y(y-\yb) 
  + ((y-\yb)\cdot\nabla)\hat{z} \\
 &-(\nabla(y-\yb))^T\hat{z} \quad \text{in } \Omega,\\
 \di(\rt -\hat{\rho})& = \gamma_p(\pt-p) \quad \text{in } \Omega\\
 \zt -\hat{z}&= 0 \quad \text{on } \partial \Omega.
\end{align*}
Using \eqref{equ:snz}, we get
\begin{align}\label{equ:z}
 \|\zt-\hat{z}\|_1 &\leq C(\gamma_y \|y-\yb\|+ \gamma_p\|p-\pt\|+
 \|((y-\yb)\cdot\nabla)\hat{z}\| + \|(\nabla(y-\yb))^T\hat{z}\|). 
\end{align} 
From \eqref{equ:h1l4},  \textcolor{black}{combined with 
\eqref{equ:esty},\eqref{equ:neg},\eqref{equ:snz}}, we have 
\begin{align*}
 \|((y-\yb)\cdot\nabla)\hat{z}\| \leq C\|y-\yb\|_1\|\nabla \hat{z}\|
\leq C h^2 (\gamma_y\|\bar{y}-y_d\| + \gamma_p\|\bar{p}-p_d\|).
\end{align*}
Of the four terms in the right hand side of~\eqref{equ:z}, only the last is of order one in $h$:
\begin{align*}
\|(\nabla(y-\yb))^T\hat{z}\|
&\leq \|\nabla(y-\yb)\|_{\Lb^4(\Omega)}\|\hat{z}\|_{\Lb^4(\Omega)}
\leq C \|\nabla(y-\yb)\|^{1/2}\|\nabla\nabla(y-\yb)\|^{1/2}\|\hat{z}\|_1\\
&\overset{\eqref{equ:h2nse},\eqref{equ:esty},{\eqref{equ:neg}},\eqref{equ:snz}}\leq
 C(u) h\|u\|_1^{1/2}(\|\yb-y_d\|+\|\pt-p_d\|).
\end{align*}
Using these estimates  together with
\eqref{equ:esty}, \eqref{equ:pb}, \eqref{equ:neg}  in \eqref{equ:z} we obtain 
\begin{align*}
 \|\zt -\hat{z}\|_1 \leq C(u) h\|u\|_1^{1/2}(\gamma_y\|\bar{y}-y_d\| + \gamma_p\|\bar{p}-p_d\|).
\end{align*}
\end{proof}
\end{lemma}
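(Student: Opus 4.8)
The plan is to derive the (adjoint) Oseen boundary value problem solved by the difference $\zt-\hat z$ and then feed its data into the a priori estimate~\eqref{equ:snz} from Theorem~\ref{thm:thm8}. First I would record the weak form~\eqref{equ:adj} of the system for $(\zt,\rt)$ and the analogous system for $(\hat z,\hat\rho)$, namely~\eqref{equ:adj} with $y$ replaced by $\yb=Y(\pi_{2h}u)$ and $p$ by $\pt=P(\pi_{2h}u)$; subtracting the two and inserting $\yb=y+(\yb-y)$ in every convection term, all terms carrying the common coefficient $y$ stay on the left, so that $\zt-\hat z$ (together with a pressure difference) solves the adjoint-linearized Navier--Stokes system about $y$ --- a well-posed Oseen problem since $\nu>\mathcal{M}(y)$ by~\eqref{equ:ell}, $u\in U$ --- with forcing functional $\gamma_y(y-\yb,\phi)-c(y-\yb;\phi,\hat z)-c(\phi;y-\yb,\hat z)$ and divergence datum $\gamma_p(\pt-p,q)$. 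Using $\di(y-\yb)=0$ and the identity $c(a;b,c)=((\nabla b)^Tc,a)$ one reads off the strong-form data $f=\gamma_y(y-\yb)+((y-\yb)\cdot\nabla)\hat z-(\nabla(y-\yb))^T\hat z$ and $g=\gamma_p(\pt-p)$, both in $\Lb^2(\Omega)$, so that~\eqref{equ:snz} yields $\|\zt-\hat z\|_1\le C(\|f\|+\|g\|)$.

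Next I would bound the four contributions to $\|f\|+\|g\|$. Three of them are of the optimal order $O(h^2)$ and require nothing delicate: $\gamma_y\|y-\yb\|$ is $O(h^2\|u\|_1)$ by chaining~\eqref{equ:esty} with~\eqref{equ:neg}; $\gamma_p\|p-\pt\|$ is $O(h^2\|u\|_1)$ by~\eqref{equ:pb}; and for the first-order transport term I would use the H\"older/$\h^1(\Omega)\hookrightarrow\Lb^4(\Omega)$ estimate underlying~\eqref{equ:h1l4} to get $\|((y-\yb)\cdot\nabla)\hat z\|\le C\|y-\yb\|_1\|\nabla\hat z\|$, which is $O(h^2)$ since $\|\nabla\hat z\|$ is controlled by $C(\gamma_y\|\yb-y_d\|+\gamma_p\|\pt-p_d\|)$ via~\eqref{equ:snz}, the right-hand side being bounded via the Navier--Stokes stability estimate in terms of $\|u\|,\|y_d\|,\|p_d\|$ and absorbed into the constant.

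The main obstacle is the remaining term $\|(\nabla(y-\yb))^T\hat z\|$, which is genuinely only $O(h)$: putting $\nabla(y-\yb)$ in $\Lb^2(\Omega)$ produces the small factor $O(h^2\|u\|_1)$, but the $\Lb^2$-bound on $\hat z$ is then too weak, and I want to avoid invoking an $\Lb^\infty$-type bound on $\hat z$. The way around this is the $\Lb^4(\Omega)\times\Lb^4(\Omega)$ splitting $\|(\nabla(y-\yb))^T\hat z\|\le\|\nabla(y-\yb)\|_{\Lb^4(\Omega)}\|\hat z\|_{\Lb^4(\Omega)}$, with $\|\hat z\|_{\Lb^4(\Omega)}\le C\|\hat z\|_1$ by the two-dimensional embedding, followed by Ladyzhenskaya's inequality $\|\nabla(y-\yb)\|_{\Lb^4(\Omega)}\le C\|\nabla(y-\yb)\|^{1/2}\|\nabla\nabla(y-\yb)\|^{1/2}$. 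The first factor is $O(h\|u\|_1^{1/2})$ by~\eqref{equ:esty} and~\eqref{equ:neg}, but the second factor is only $O(1)$ in $\|u\|$ by the $\h^2$-regularity bound~\eqref{equ:h2nse} applied to both $y$ and $\yb$ --- it does not decay with $h$, which is precisely why the power of $h$ drops from $2$ to $1$ and the factor $\|u\|_1^{1/2}$ enters. Hence $\|(\nabla(y-\yb))^T\hat z\|\le C(u)\,h\,\|u\|_1^{1/2}\|\hat z\|_1$. Feeding the four estimates into $\|\zt-\hat z\|_1\le C(\|f\|+\|g\|)$ and absorbing $\|\yb-y_d\|$, $\|\pt-p_d\|$, $\|y\|_2$, $\|\yb\|_2$ (all finite for the fixed $u$) into the constant gives $\|\zt-\hat z\|_1\le Ch\|u\|_1^{1/2}$ with $C=C(u,y_d,p_d,\gamma_y,\gamma_p)$.
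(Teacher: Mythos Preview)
Your proposal is correct and follows essentially the same route as the paper: subtract the two adjoint systems to obtain an adjoint-linearized Navier--Stokes problem about $y$ for $\zt-\hat z$, apply the a~priori bound~\eqref{equ:snz} to its $(f,g)$ data, and then estimate the four pieces, with the key term $\|(\nabla(y-\yb))^T\hat z\|$ handled via the $\Lb^4\times\Lb^4$ H\"older splitting, Ladyzhenskaya's inequality, and the $\h^2$-regularity~\eqref{equ:h2nse} to bound $\|\nabla\nabla(y-\yb)\|$. Your identification of this last term as the sole obstruction to $O(h^2)$ and the mechanism producing the $h\|u\|_1^{1/2}$ factor matches the paper exactly.
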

}
\textcolor{black}{We are now in a position to prove the main result for mixed/pressure control.}
\begin{theorem}\label{thm:hessvp}
Let $u\in U\cap X_h$ be so that $p = P(u) \in W_0^{1,0}(\Omega)$. If 
$p_d \in W_0^{1,0}(\Omega) \cap Q$, then  there exists 
a constant $C=C(\Omega, u,y_d,p_d,\gamma_y,\gamma_p)$  such that
\begin{align*}
 \|(H_{\beta}^h(u)-T_{\beta}^h(u))v\| \leq C h\|v\| \quad \forall v\in X_h.
\end{align*}
\end{theorem}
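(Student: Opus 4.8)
The plan is to mimic the structure of the proof of Theorem~\ref{thm:hessv}, decomposing the difference $T_{\beta}^h(u)-H_{\beta}^h(u)$ into the three blocks coming from~\eqref{equ:hdmp} and~\eqref{equ:dhesspress}: the velocity-tracking part $\gamma_y(A_{2h}(\pi_{2h}u)\pi_{2h}-A_h(u))$, the pressure-tracking part $\gamma_p(B_{2h}(\pi_{2h}u)\pi_{2h}-B_h(u))$, and the nonlinear part $\tc_{2h}(\pi_{2h}u)\pi_{2h}-\tc_h(u)$. For the $A$-block I would reuse verbatim the argument leading to~\eqref{equ:term3}, since that estimate already gives $Ch^2\|v\|$, which is better than needed. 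For the $B$-block I would split $B_{2h}(\pi_{2h}u)\pi_{2h}-B_h(u)$ exactly as in~\eqref{equ:db}, into $(B_{2h}-B)(\pi_{2h}u)\pi_{2h}$, $B(\pi_{2h}u)(\pi_{2h}-I)$, $B(\pi_{2h}u)-B(u)$, and $B(u)-B_h(u)$; using the symmetry of $B(u)-B_h(u)$ together with $\|(M-M_h)v\|\le Ch\|v\|$ from~\eqref{equ:lnspfe} and the stability~\eqref{equ:lnspstab},~\eqref{equ:lnspfe}, each term is bounded by $Ch\|v\|$ (note the pressure operator only gives one power of $h$, which is why the theorem states $O(h)$ rather than $O(h^2)$), with the control-perturbation term handled by~\eqref{equ:estp} and the projection term by~\eqref{equ:nppl}.

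The heart of the proof is the nonlinear term. I would write, as in~\eqref{equ:db},
\begin{align*}
 \tc_{2h}(\pi_{2h}u)\pi_{2h} - \tc_h(u)
 &= (\tc_{2h}(\pi_{2h}u) - \tc(\pi_{2h}u))\pi_{2h} + \tc(\pi_{2h}u)(\pi_{2h}-I) \\
 &\quad + \tc(\pi_{2h}u) - \tc(u) + \tc(u)- \tc_h(u),
\end{align*}
and estimate each of the four pieces through the identity $(\tc(u)v,v)=-2c(Lv;Lv,\zt)$ from~\eqref{equ:Ctilde} and its discrete counterpart~\eqref{equ:bhpress}, exploiting symmetry to pass from the quadratic form back to the operator norm. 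For the consistency term $\tc(u)-\tc_h(u)$ I would expand $c(L_hv;L_hv,\zt_h)-c(Lv;Lv,\zt)$ into telescoping differences in each slot, and bound them using~\eqref{equ:lnsfeh1}--\eqref{equ:lnsfe} for $L-L_h$, the $\h^2$-bound~\eqref{equ:estlinh2} on $Lv$, together with the convergence~\eqref{equ:fenz} of the adjoint $\zt_h\to\zt$ supplied by Theorem~\ref{thm:thm8} (this is where the hypothesis $p,p_d\in W_0^{1,0}(\Omega)$ is used, to place $\zt$ in $\h^2$ and get $\|\zt-\zt_h\|_1\le Ch(\cdots)$); here the antisymmetry $c(Lv;Lv,\cdot)=\ct(Lv;Lv,\cdot)$ must be used to reconcile $c$ and $\ct$. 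The control-perturbation term $\tc(\pi_{2h}u)-\tc(u)$ is handled just as the corresponding step in Theorem~\ref{thm:hessv}, by writing the difference as $-2c(Lv;Lv,\zt-\hat z)$ and invoking the newly proved estimate~\eqref{equ:ztildethm}, $\|\zt-\hat z\|_1\le Ch\|u\|_1^{1/2}$, together with~\eqref{equ:estlinh2}; this contributes $O(h)$. The projection term $\tc(\pi_{2h}u)(\pi_{2h}-I)v$ is treated exactly as the last display in the proof of Theorem~\ref{thm:hessv}, using antisymmetry of the trilinear form in its last two arguments to move the factor $L(\pi_{2h}-I)v$ into a slot where~\eqref{equ:negl} applies, giving $\le C\|L(\pi_{2h}-I)v\|\,\|\hat z\|_2\,\|Lv\|_1\le Ch^2\|v\|^2$.

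I expect the main obstacle to be the consistency term $\tc(u)-\tc_h(u)$, specifically the estimate $\|\zt-\zt_h\|_1$: unlike the velocity adjoint $z$ in the velocity-control case, the mixed adjoint $\zt$ solves the system~\eqref{equ:adj_m} with a \emph{nonzero divergence} $\gamma_p(p_d-p)$, so plain Stokes regularity and the standard $L^2$-error estimate do not apply and one must route through the weighted-Sobolev regularity of Theorem~\ref{thm:thm8}; moreover the target pressure $p_d$ and the state pressure $p=P(u)$ must both lie in $W_0^{1,0}(\Omega)$ for this to yield $\zt\in\h^2$, which is exactly the standing hypothesis of the theorem. A secondary technical point is that the trilinear terms involving $\zt$ only admit the $\|y\|_1\|\phi\|_2\|\psi\|$-type bounds from Lemma~\ref{lemma:trilinear}, so one power of $h$ is unavoidably lost compared with the velocity-only case, and care is needed to make sure every term is at worst $O(h)$ rather than accidentally $O(h^{1/2})$ — in particular the $\|u\|_1^{1/2}$ in~\eqref{equ:ztildethm} is harmless since it does not carry a negative power of $h$. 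Once all four pieces of the nonlinear block, together with the $A$- and $B$-blocks, are shown to be bounded in $\mathbf{L}^2$-operator norm by $Ch$, the theorem follows.
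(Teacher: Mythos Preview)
Your proposal is correct and follows essentially the same approach as the paper's proof: the same three-block decomposition, the same four-term telescoping splitting of each of the $B$- and $\tc$-blocks, and the same key ingredients (Theorem~\ref{thm:thm8} and~\eqref{equ:fenz} for the adjoint consistency, \eqref{equ:ztildethm} for the control-perturbation, \eqref{equ:nppl} and~\eqref{equ:negl} for the projection terms). The only cosmetic difference is that for the consistency piece $\tc(u)-\tc_h(u)$ the paper bounds every trilinear term using only $\h^1$-norms (the estimate $|c(y;\phi,\psi)|\le C\|y\|_1\|\phi\|_1\|\psi\|_1$), rather than invoking~\eqref{equ:estlinh2} for $\|Lv\|_2$ as you suggest; since the target is $O(h)$ either route works.
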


\begin{proof}
For any $u\in U \cap X_h$, we have 
\begin{align*}
     T_{\beta}^h(u) - H_{\beta}^h(u) 
 &= \gamma_y(A_{2h} (\pi_{2h}u) \pi_{2h} - A_h(u)) + 
    \gamma_p(B_{2h} (\pi_{2h}u) \pi_{2h} - B_h(u)) \\
 &+ \tc_{2h}(\pi_{2h}u)\pi_{2h} - \tc_h(u).
\end{align*}
We use the same approach as in the case of velocity control only. 
We have already shown in~\eqref{equ:term3} that the first term is
$O(h^2\|v\|)$. The second term is estimated similarly:
\begin{equation}\label{equ:dpa}
\begin{aligned}
     B_{2h}(\pi_{2h}u)\pi_{2h} - B_h(u) 
 &= [B_{2h}(\pi_{2h}u) - B(\pi_{2h}u)]\pi_{2h} 
 + B(\pi_{2h}u) (\pi_{2h}-I) \\
 &+ B(\pi_{2h}u) - B(u)  + B(u) - B_h(u).
\end{aligned}
\end{equation}
For any $v \in X_h$ we have
\begin{align*}
    |((B(u)-B_h(u))v,v)| 
 &= |(M^*M v-M_h^*M_h v,v)| = 
    |\|M v\|^2-\|M_h v\|^2| \\
 &\leq \|M v - M_h v\|(\|M v\| +\|M_h v\|) \leq C h\|v\|^2,  
\end{align*}
where we used \eqref{equ:lnspfe} and \eqref{equ:lnspstab}. 
Similarly, it can be shown 
\begin{align*}
 \|(B_{2h}(\pi_{2h}u)-B(\pi_{2h}u))v\|\leq C h \|v\|^2, \quad \forall v\in X_h. 
\end{align*}
The second term in \eqref{equ:dpa} can be bounded as
\begin{align}
 \label{equ:Btermpressest}
 \|B(\pi_{2h}u)(\pi_{2h}-I)v\| =
 \|M^*(\pi_{2h}u)M(\pi_{2h}u)(\pi_{2h}-I)v \| 
 \overset{\eqref{equ:nppl}}{\leq} C h \|v\|. 
\end{align}
Finally, we have
\begin{align*}
  |(B(\pi_{2h}u)v &- B(u)v, v)| 
 = |(M^*(\pi_{2h}u)M(\pi_{2h}u)v - M^*(u)M(u)v,v)| \\
 &= |\|M(\pi_{2h}u)v\|^2-\|M(u)v\|^2| \\
 & \leq \|(M(\pi_{2h} u)v - M(u)v\|(\|M(\pi_{2h}u)v\| + \|M(u)v\|)
 \overset{\eqref{equ:estp}}{\leq}Ch^2\|u\|_1\|v\|,
\end{align*}
which gives 
\begin{align}
  \|(B(\pi_{2h}u) - B(u))v\| \leq Ch^2\|u\|_1\|v\|.
\end{align}
Next, we estimate 
\begin{equation}\label{equ:dbm}
\begin{aligned}
    \tc_{2h}(\pi_{2h}u)\pi_{2h} - \tc_h(u) 
&= [\tc_{2h}(\pi_{2h}u) - \tc(\pi_{2h}u)]\pi_{2h}  
 + \tc(\pi_{2h}u)(\pi_{2h}-I) \\
& + \tc(\pi_{2h}u) - \tc(u)+ \tc(u)- \tc_h(u).
\end{aligned}
\end{equation}
We first estimate the term  
 $\|\tc(u)v - \tc_h(u)v\| $,  
and recall that 
\begin{align*} 
 (\tc(u)v,v) = -2c(Lv;Lv,\zt) \text{ and } 
 (\tc_h(u)v,v) = -2\ct(L_h v;L_h v,\zt_h), 
\end{align*}
with $\zt=\lt(\gamma_y(y-y_d),\gamma_p(p_d-p))$, 
$\zt_h=\lt_h(\gamma_y(y_h-y_d^h),\gamma_p(p_d^h-p_h))$, with the operators
$\lt$ and $\lt_h$ as defined in Theorem~\ref{thm:thm8}.
Thus, 
\begin{align*}
|(\tc(u)v - \tc_h(u)v,v)| &= 
|2\ct(Lv;Lv,\zt)-2\ct(L_hv;L_hv,\zt_h)| \\
&\leq |c(Lv;Lv,\zt)-c(L_hv;L_hv,\zt_h)|\\
&\quad+ |c(Lv;\zt,Lv)-c(L_hv;\zt_h,L_hv)|.
\end{align*}
The first term in the inequality above can be bounded by
\begin{align*}
|c(L_hv;L_hv,\zt_h)-c(Lv;Lv,\zt)| &\leq
|c((L_h-L)v;L_hv,\zt_h)| + |c(Lv;L_hv, \zt_h-\zt)|\\
&\quad +|c(Lv;\zt,(L_h-L)v|, 
\end{align*}
where we used $c(Lv;L_hv-Lv,\zt) = -c(Lv;\zt,L_hv-Lv)$
since $Lv \in V$.
Thus, we have 
\begin{align*}
  \lefteqn{|c(L_hv;L_hv,\zt_h)-c(Lv;Lv,\zt)| }\\
 &\leq    C(\|Lv -L_hv\|_1 \|L_hv\|_1 \|\zt_h\|_1 + \|Lv\|_1 \|L_hv\|_1 \|\zt_h-\zt\|_1 
  + \|Lv\|_1 \|\zt\|_1 \|L_hv-Lv\|_1 )\\
 & \stackrel{\eqref{equ:lnsfeh1},\eqref{equ:fenz}}{\leq} 
           C h\|v\|^2(\gamma_y\|y-y_d\| + \gamma_p\|p-p_d\|_{W_0^{1,0}(\Omega)}).
\end{align*}
Note that we have used~\eqref{equ:fenz} also for 
$\|L_h v\|_1\leq C \|v\|$, since $L_h v = \lt_h(v,0)$. Also,
\begin{align*}
  |c(L_hv&;\zt_h,L_hv) -c(Lv;\zt,Lv)| 
\leq |c((L_h-L)v;\zt_h,L_hv)| +|c(Lv;\zt_h,(L_h-L)v)| \\
&+|c(Lv;Lv,\zt-\zt_h)|\\
&\leq \|L_h v -Lv\|_1 \|\zt_h\|_1 \|L_h v\|_1 
+\|Lv\|_1 \|\zt_h\|_1 \|L_hv - Lv\|_1 \\
&+\|Lv\|_1\|Lv\|_1 \|\zt-\zt_h\|_1 
\leq Ch\|v\|(\gamma_y\|y-y_d\| + \gamma_p\|p-p_d\|_{W_0^{1,0}(\Omega)}).
\end{align*}
Similarly, it can be shown that 
$\|(\tc_{2h}(\pi_{2h}u) - \tc(\pi_{2h}u)\pi_{2h}v \| \leq C h\|v\|$. 
To estimate the third term in \eqref{equ:dbm}, let 
$\hat{z}=\lt(\gamma_y(Y(\pi_{2h}u)-y_d),\gamma_p(p_d-P(\pi_{2h}u)))$.
Then
\begin{align*}
 &|(\tc(\pi_{2h} u)v - \tc(u)v,v)| =
 2|c(Lv;Lv,\zt) - c(Lv;Lv,\hat{z})|= 2|c(Lv;Lv,\zt-\hat{z})|\\ 
 &\leq C \|Lv\|_1^2 \|\zt-\hat{z}\|_1 \stackrel{\eqref{equ:fenz}}{\leq} 
C \|v\|^2 \|\zt-\hat{z}\|_1\overset{\eqref{equ:ztildethm}}\leq C(u,y_d,p_d,\gamma_y,\gamma_p) h\|v\|^2.
\end{align*}
Finally, let $w=(\pi_{2h}-I)v$. With $L=L(\pi_{2h}u)$ we have (see~\eqref{eq:ctilde})
\begin{align*}
  & |(\tc(\pi_{2h}u)(\pi_{2h}-I)v,v)| 
 = |((Lw\cdot \nabla)\hat{z} -(\nabla Lw)^T\hat{z},Lv)| \\
 &\leq |((L w \cdot \nabla )\hat{z}),Lv)| + 
     |(\nabla L w)^T\hat{z}, Lv)| = |c(L w; \hat{z}, Lv)| + |c(Lv;L w, \hat{z})| \\
&= |c(L w; \hat{z}, Lv)| + |c(Lv;\hat{z}, L w)| \leq C \|Lw\| \|\hat{z}\|_2 \|Lv\|  \\
&\overset{\eqref{equ:negl}}\leq 
C h^2 \|v\| \|\hat{z}\|_2 \|v\| 
\overset{\eqref{equ:adj_nzd}}{\leq} C(u,y_d,p_d,\gamma_y,\gamma_p) h^2 \|v\|^2
\end{align*}
which combined with the other estimates yields the conclusion.
\end{proof}
\textcolor{black}{The following result follows from Theorem~\ref{thm:hessvp} using  arguments similar 
to the ones in the proof of Corollary~\ref{cor:velmg}. Essentially it shows a decline by one unit in the approximation order
of the two-grid preconditioner for the case of mixed/pressure control compared to velocity control. This is consistent with the case of the Stokes-constrained
problem studied in~\cite{DS}.}
\begin{cor}
\label{cor:velpressmg}
\textcolor{black}{Under the conditions of Theorem~\ref{thm:hessvp}}, if $\tilde{\C}_h(u)$ is symmetric 
positive definite then 
\begin{align}\label{equ:sdp}
 d(H_{\beta}^h(u),T_{\beta}^h(u)) \leq \frac{C}{\beta}h,
\end{align}
for $h < h_0(\beta,\Omega,L,M,\tilde{L})$.
\end{cor}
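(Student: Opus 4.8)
The plan is to follow the proof of Corollary~\ref{cor:velmg} almost verbatim, with Theorem~\ref{thm:hessvp} replacing Theorem~\ref{thm:hessv}; the loss of one power of $h$ is then simply inherited. The one structural fact I would isolate first is a coercivity estimate for the discrete Hessian: from~\eqref{equ:dhesspress} we have, for every $v\in X_h$,
\[
 (H_{\beta}^h(u)v,v) = \beta\|v\|^2 + \gamma_y\|L_h v\|^2 + \gamma_p\|M_h v\|^2 + (\tc_h(u)v,v),
\]
and since $\gamma_y,\gamma_p\ge 0$ and $\tc_h(u)$ is, by hypothesis, symmetric positive definite, this yields $(H_{\beta}^h(u)v,v)\ge \beta\|v\|^2$. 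This is exactly where the assumption on $\tc_h(u)$ is used, and it is what prevents the denominator in the spectral-distance quotient from degenerating.

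With this in hand, I would combine Theorem~\ref{thm:hessvp} with the Cauchy--Schwarz inequality (and the symmetry of the norm, $\|(H_{\beta}^h(u)-T_{\beta}^h(u))v\| = \|(T_{\beta}^h(u)-H_{\beta}^h(u))v\|$) to obtain, for all $v\in X_h\setminus\{0\}$,
\[
 \left|\frac{(T_{\beta}^h(u)v,v)}{(H_{\beta}^h(u)v,v)}-1\right|
 \le \frac{\|(H_{\beta}^h(u)-T_{\beta}^h(u))v\|\,\|v\|}{(H_{\beta}^h(u)v,v)}
 \le \frac{Ch\|v\|^2}{\beta\|v\|^2} = \frac{C}{\beta}h,
\]
with $C=C(\Omega,u,y_d,p_d,\gamma_y,\gamma_p)$ as in Theorem~\ref{thm:hessvp}. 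Next I would fix $h_0=h_0(\beta,\Omega,L,M,\tilde L)$ so that $\alpha:=C\beta^{-1}h_0<1$; then for $0<h\le h_0$ the quotient lies in $[1-\alpha,1+\alpha]$, which in particular shows that $T_{\beta}^h(u)$ is positive definite, so that $d(H_{\beta}^h(u),T_{\beta}^h(u))$ is well defined. Finally, invoking the elementary inequality $|\ln x|\le \tfrac{|\ln(1-\alpha)|}{\alpha}|1-x|$ for $x\in[1-\alpha,1+\alpha]$ — the same one used in the proof of Corollary~\ref{cor:velmg} — I would conclude
\[
 d(H_{\beta}^h(u),T_{\beta}^h(u)) = \sup_{v\in X_h\setminus\{0\}}\left|\ln\frac{(T_{\beta}^h(u)v,v)}{(H_{\beta}^h(u)v,v)}\right|
 \le \frac{|\ln(1-\alpha)|}{\alpha}\cdot\frac{C}{\beta}h \le \frac{C}{\beta}h,
\]
after absorbing the $\alpha$-dependent factor into $C$.

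There is essentially no deep obstacle: the analytic work is entirely contained in Theorem~\ref{thm:hessvp}, and what remains is the same one-line quotient estimate plus the logarithm inequality as in Corollary~\ref{cor:velmg}. The only points requiring care are bookkeeping ones: verifying that the hypotheses of Theorem~\ref{thm:hessvp} are in force — $p=P(u)\in W_0^{1,0}(\Omega)$ and $p_d\in W_0^{1,0}(\Omega)\cap Q$ — so that the $O(h\|v\|)$ operator-norm bound is legitimately available, and making explicit that it is the positive definiteness of $\tc_h(u)$ (together with $\gamma_y,\gamma_p\ge 0$) that yields $(H_{\beta}^h(u)v,v)\ge\beta\|v\|^2$. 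As in the Stokes-constrained case of~\cite{DS}, the resulting rate is $O(h/\beta)$ rather than the $O(h^2/\beta)$ of the velocity-control problem, the degradation being a direct consequence of the weaker estimate in Theorem~\ref{thm:hessvp} as compared with Theorem~\ref{thm:hessv}.
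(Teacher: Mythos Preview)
Your proposal is correct and follows essentially the same approach as the paper: the paper simply states that the result follows from Theorem~\ref{thm:hessvp} using arguments similar to the proof of Corollary~\ref{cor:velmg}, and your write-up is precisely that argument with the obvious substitutions (the $O(h)$ bound in place of $O(h^2)$, and the extra $\gamma_p\|M_hv\|^2$ term in the coercivity estimate). Your explicit identification of where the positive-definiteness of $\tc_h(u)$ is used to obtain $(H_{\beta}^h(u)v,v)\ge\beta\|v\|^2$ is a helpful clarification that the paper leaves implicit.
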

\textcolor{black}
{
\begin{rem}
\label{rem:cor2subopt}
We should note that the hypotheses of Theorem~\ref{thm:hessvp} are quite restrictive, in that we cannot expect 
$p_d$ and $P(u)\in W_0^{1,0}(\Omega)$ in practice; smooth functions in $W_0^{1,0}(\Omega)$ tend to zero near the corners
of the domain and there is no a priori reason for a flow to have a zero-pressure near the corners of the domain.
Nevertheless, this fact does not prevent the preconditioner to work quite well for the 
mixed/pressure control, as shown in Section~\ref{sec:numerics}.
\end{rem}
}
\begin{rem}
\label{rem:multigrid}
The two-grid preconditioner can be extended to a multigrid preconditioner following essentially the same  strategy as 
in\textnormal{~\cite{DS}}, and the analysis is extended in a similar fashion to show that the multigrid preconditioner
satisfies the estimates~\eqref{equ:sdv} and~\eqref{equ:sdp}. Suffice it to say that the correct multigrid preconditioner has a 
$W$-cycle structure, while the  associated $V$-cycle gives suboptimal results; furthemore, 
the coarsest level has to be sufficiently fine in order for the optimal quality to be preserved.
\end{rem}

\section{Numerical results}
\label{sec:numerics}
We present a set of numerical results to showcase the behavior of our multigrid preconditioner in the Newton
iteration of~\eqref{equ:ocd} on $\Omega=(0,1)^2$. 
We consider uniform rectangular grids with mesh sizes $h = 1/32, 1/64, 1/128, 1/256$, 
and we use 
Taylor-Hood \mbox{$\mathbf{Q}_2$−-$\mathbf{Q}_1$} elements for velocity-pressure and $\mathbf{Q}_2$ 
elements for the controls.
The data is given by \mbox{$y_d^h=Y_h(u_h)$}, $p_d^h=P_h(u_h)$, with $u_h$ being the interpolant of the 
target control $u(x,y)=[10^3 (\textnormal{sign}(y-0.9)+1) (y-0.9)^2,0]$ (see Figure~\ref{fig:target}); the velocity field resembles 
one obtained from a lid-driven cavity flow. \textcolor{black}{In Figure~\ref{fig:recovered} we show the optimal control and the recovered velocity and pressure 
profile for the velocity control problem. As can be seen in the picture, if $\gamma_p=0$, the pressure is not recovered.}
The Newton iteration is stopped 
when $\|\nabla \hat{J}_h\|_{\infty}\leq 10^{-10}$. On the coarsest grid at $h=1/32$ we use a zero-initial guess for the Newton solve,
while for subsequent grids we start the iteration using the solution from the coarser problem.
The linear systems at each iteration are solved in two ways: first we use
conjugate gradient preconditioned by the multigrid preconditioner (MGCG) (see Remark~\ref{rem:multigrid}),
with base cases $h_0=1/32$ or $1/64$, depending on necessity.
Second, we solve the same systems using unpreconditioned conjugate gradient (CG). The reduced Hessian
is applied matrix-free  using~\eqref{equ:dhess}--\eqref{equ:dhesspress}. Obviously, the Hessian-vector multiplication (matvec) is the most
expensive operation, as it essentially requires solving the linearized Navier-Stokes system twice. The goal is to show that, as a result
of multigrid preconditioning, the number of matvecs at the highest resolution is relatively low compared to 
the unpreconditioned case.

\begin{figure}[!t]
\begin{center}
        \includegraphics[width=5in]{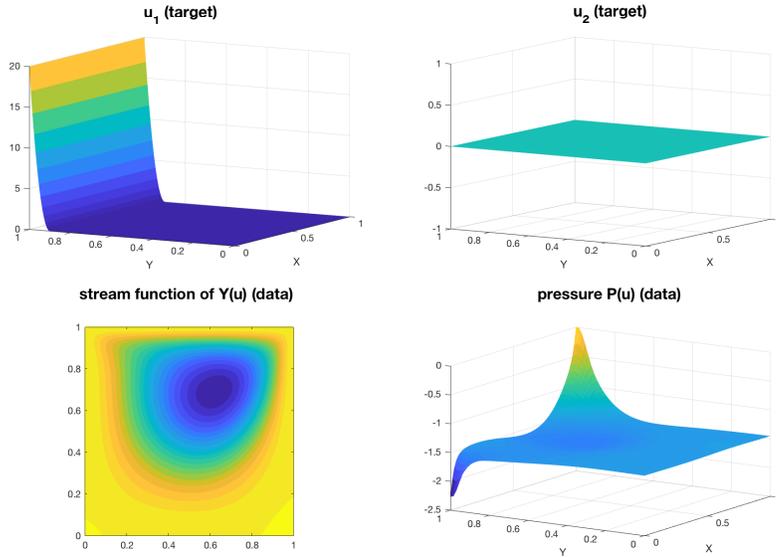}
\caption{Top images: components of target control. Bottom images: velocity (stream function) and pressure data. Viscosity is
$\nu=0.01$ with
$Re =\nu^{-1}\|Y(u)\|_{\infty} \approx 105$, and $h=1/64$.}
\label{fig:target}
\end{center}
\end{figure}

\begin{figure}[!t]
\begin{center}
        \includegraphics[width=5in]{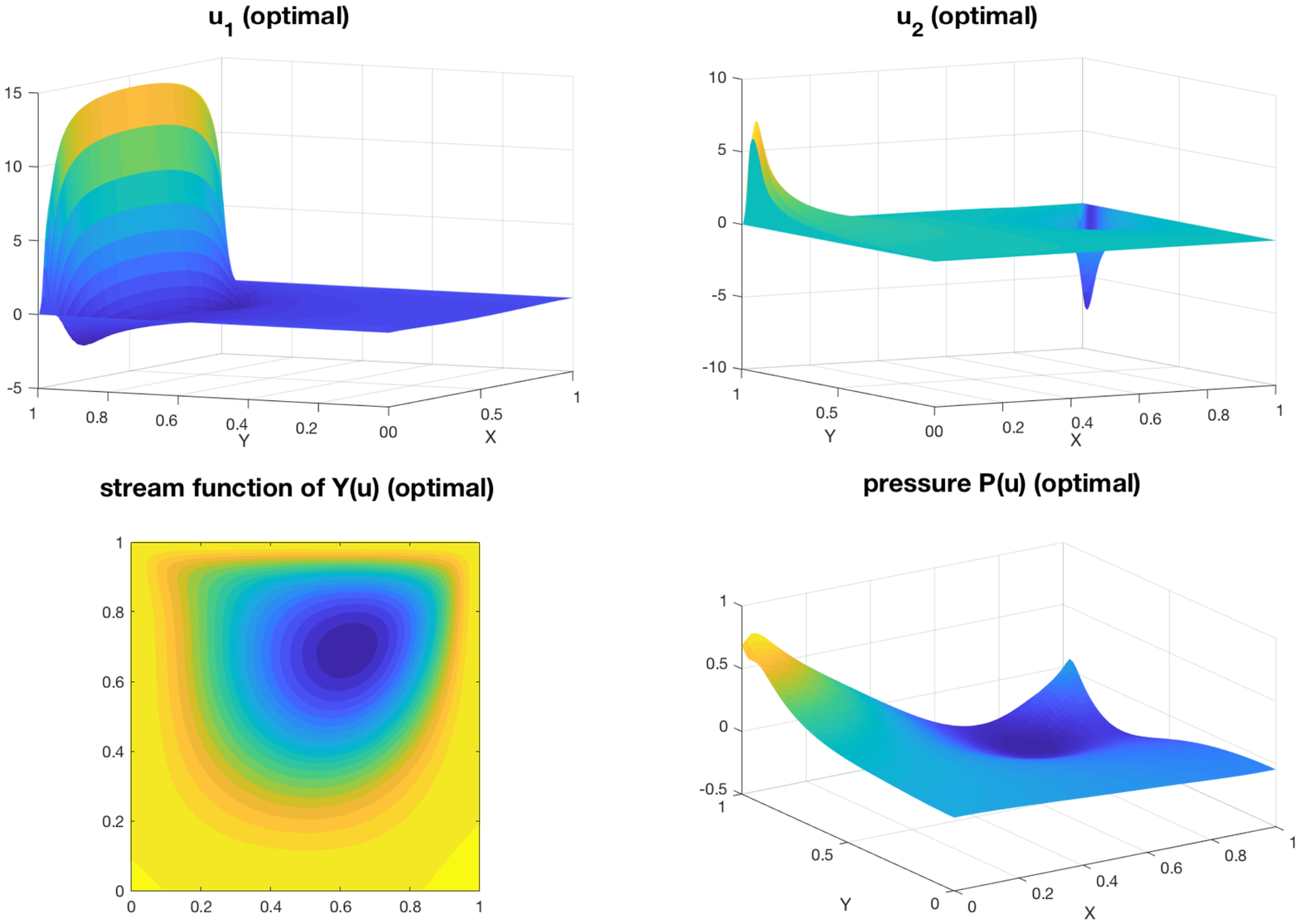}
\caption{Top images: components of optimal control corresponding to data from Figure~\textnormal{\ref{fig:target}}. Bottom images: optimal velocity (stream function) and pressure. 
Parameters values are: $\gamma_y=1$, $\gamma_p=10^{-3}$, and $\beta=10^{-5}$.}
\label{fig:recovered}
\end{center}
\end{figure}

We present in Table~\ref{tab:nu_pm1} results for low  and in Table~\ref{tab:nu_pm2} for moderate 
Reynolds numbers, and we compare
velocity control ($\gamma_p=0$) with  mixed velocity-pressure control with varying ratios of the two terms in $\hat{J}_h$
($\gamma_y=1$, $\gamma_p=10^{-4}, 10^{-3}$). As for the regularization parameter we let
$\beta=10^{-4}, 10^{-5}$.  For each of the twelve parameter choices and for each $h=1/64, 1/128, 1/256$
we report  the number of iterations of the MGCG/CG-based solves for each Newton iteration 
as well as the total (added) wall-clock time of the linear solves \textcolor{black}{(since the overhead due to the gradient computation and Hessian setup is the same
for both solves)}.  
For example, in the top left compartment of 
Table~\ref{tab:nu_pm2}, we show the case $\nu=0.01$, $\gamma_y=1,\gamma_p=0$ (velocity control only), $\beta=10^{-4}$ with 
resolutions $1/64, 1/128, 1/256$. At resolution $h=1/256$ two Newton iterations were required with CG
necessitating $382$ and $274$ iterations, with a total time of linear solves of $11.4$ hours, while the four-grid 
MGCG needed $6$ and $4$ iterations for a total of $0.58$ hours, meaning almost twenty times faster. Note that at the coarsest level
we actually build the Hessian at each Newton iteration and invert it using direct methods, the time of this operation being 
included in the reported wall-clock time. \textcolor{black}{We should note that using direct methods at the coarsest level
is critical for mixed/pressure control, but can be replaced  with low-tolerance iterative solves for velocity control, a phenomenon 
that is still under scrutiny. For the experiments in Tables~\ref{tab:nu_pm1} and~\ref{tab:nu_pm2}} 
the relative tolerance for both CG and MGCG is set at $10^{-8}$.
\textcolor{black}{While this is quite low for the Newton-CG method, it forces a slightly higher number of linear iterations,
thus allowing to better observe the desired behavior of the MGCG preconditioner, namely that for fixed $\beta$ the number of linear iterations
will decrease with $h\downarrow 0$. For two of the twelve cases we also vary the tolerance for the linear solves ($10^{-2}$ and $10^{-4}$) and show the results in 
Table~\ref{tab:nu_pm1tole-2}. Here we also report the total wall-clock time for the computation (including the gradient computation and Hessian setup at 
each Newton iteration),
since  the number of Newton iterations begins to vary between CG to MGCG when using a large tolerance. This
is due mainly to two factors related to MGCG: either it converges very fast and the gradients are better approximated than expected 
even if the tolerance is set at $10^{-2}$, or the MG preconditioner is not positive definite and so MGCG fails to converge (we still report 1  MGCG iteration). 
We allow a maximum of 10 Newton iterations.}

\textcolor{black}{Tables~\ref{tab:nu_pm1} and~\ref{tab:nu_pm2}} indicate a behavior that is standard for the multigrid preconditioner presented in this work, and which is 
consistent with the analysis.
First we notice that unpreconditioned CG is scalable, in the sense that for each case the number of CG iterations is bounded with 
respect to mesh-size (the 
wall-clock times suffer due to the fact that we used direct solvers for the linearized Navier-Stoles solves in the matvec). 
The MGCG instead shows an efficiency
that increases over CG with decreasing $h$, measured both in terms of number of iterations and wall-clock time,
and this can be seen for all the velocity control cases,  and for the mixed control cases with base case $h_0=1/64$ at $\nu=0.1$ 
(see Table~\ref{tab:nu_pm1}). As usual with these types of algorithms, the lower order of approximation for the mixed/pressure control
case leads not only to a slightly higher number of MGCG iterations, but also requires a finer base case; for all the 
mixed velocity-pressure control problems, the four-grid preconditioner at resolution $h=1/256$ (base case $h_0=1/32$)
led to an iteration \textcolor{black}{that does not converge within 10 Newton iterations since the MG-preconditioner is not positive definite}.
However, the base case choice $h_0=1/64$ appears to be sufficient when $\nu=0.1$. For the higher 
Reynolds number case, while we did not encounter divergence with base case $h_0=1/64$, it is conceivable that it may still 
be too coarse, that is, it will lead to divergence at higher resolutions. 
We should point out that we purposefully selected a set of parameters that exhibit a variety of behaviors expected from these
types of algorithms.
Yet we find it remarkable that whenever MGCG  converges, it does so significantly faster 
than unpreconditioned CG, with significant wall-clock savings.

\textcolor{black}{The results in Table~\ref{tab:nu_pm1tole-2} are also instructive. First they show that for the two cases ($\gamma_p=10^{-3}$ and $\beta=10^{-4}, 10^{-5}$),
a tolerance of $10^{-4}$ leads to the same number of Newton iterations as with $10^{-8}$, as in the last group in Table~\ref{tab:nu_pm1}.
Second, it shows that a large tolerance may lead to an increase in the number of Newton iterations coupled with fewer CG/MGCG iterations per Newton iteration, as expected.
However, in almost ever case, this results in a higher wall-clock time for the entire computation due to the costly overhead at the beginning of each Newton iteration.
To conclude, it is preferable to set the tolerance sufficiently low  in order to minimize the number of Newton iterations; then one should use the 
coarsest base case for the MG preconditioner that preserves the good approximation properties of the two-grid preconditioner, as described in detail in~\cite{DS}. Whether MGCG or 
unpreconditioned CG gives a faster wall-clock time certainly depends on the particular problem parameters, but what remains consistent is the 
increasing efficiency of MGCG over CG as $h\downarrow 0$.}

\section{Conclusions \textcolor{black}{and extensions}} 
\label{sec:conclusions}
We have developed and analyzed a two-grid preconditioner to be used in the Newton iteration 
for the optimal control of the stationary Navier-Stokes equations. Under the natural assumption that the iteration
starts sufficiently close to  the solution it is shown that the preconditioner has a behavior that is similar
to the optimal control of the stationary Stokes equations~\cite{DS}. While the extension to multigrid is not
explicitly discussed due to the similarity with the Stokes-control case, numerical results confirm that the behavior 
is consistent with the analysis, and can lead to significant savings over unpreconditioned CG-based solves. 

\textcolor{black}{
The method described in this work extends naturally to boundary control, although the optimal discretization of
the controls and the analysis requires additional fine tuning.
As shown in the case of optimal control of linear elliptic equations, the analysis of the 
analogous MGCG preconditioner for boundary control problems is challenging, and is expected to behave differently than
for distributed optimal control~\cite{monathesis}, with significant differences being observed between 
Dirichlet- and Neumann control. Extending those results to the optimal control of the Stokes and Navier-Stokes
systems forms the subject of our current research. Last, but not least, adding control constraints to a semismooth Newton approach
as in the elliptic control case presents a set of challenges. The technique develped in~\cite{MR3537010} 
for distributed optimal control of linear elliptic equations with control constraints produces a multigrid preconditioner which
approximates the Hessian to $O(h/\beta)$, assuming a piecewise constant discretization of the controls. Thus, 
if applied to Navier-Stokes control, the method 
in~\cite{MR3537010} is expected to yield an optimal-order preconditioner for the mixed/pressure control, but not for velocity
control.  Improving that quality to the optimal order $O(h^2/\beta)$ also forms the subject of
current research.
}

\section*{Acknowledgments} 
The authors thank the editor and the anonymous referees for the careful reading of the manuscript and useful suggestions.

\begin{table}[!h]
 \begin{center}
 \caption{Iteration counts and runtimes for MGCG vs. CG; $\nu=0.1$, $Re =\nu^{-1}\|Y(u)\|_{\infty} \approx 1.1$.
   Tolerance is set at $10^{-8}$.}
      {\begin{tabular}{|@{}c|c|c|c||c|c|c|}\hline
                & \multicolumn{3}{|c||}{$\beta=10^{-4}$} & \multicolumn{3}{c|}{$\beta=10^{-5}$}\\ \hline
          $h_j^{-1}$ & $64$ & $128$ & $256$ & $64$ & $128$ & $256$  \\\hline\hline
           \multicolumn{7}{|c|}{$\gamma_p=0$ (velocity control only)}\\\hline
          \# cg & 44 & 42  & 41  & 107 & 103  & 103  \\\hline
          $t_{\mathrm{cg}}$ (sec.)  & 113 & 497  & 3662 & 285 & 1284 & 9015 \\\hline
          \# mg  $(h_0=1/32)$ & 3  & 3 & 2 & 5 & 4 & 3\\\hline
          $t_{\mathrm{mg}}$ (sec.) & 56 & 149 & 637 & 49 & 166 & 735 \\\hline
          \emph{eff}=$t_{\mathrm{cg}}/t_{\mathrm{mg}}$& 2.02 & 3.34 & 5.75 & 5.82 & 7.73 & 12.27\\\hline\hline
            \multicolumn{7}{|c|}{$\gamma_p=10^{-4}$ (some pressure control)}\\\hline
          \# cg & 46, 39  & 46, 40  & 46, 41  & 116, 102 & 116, 103 &  117, 106\\\hline
          $t_{\mathrm{cg}}$ (sec.)  & 227 & 1016 & 7610 & 579 & 2754 & 19483\\\hline
          \# mg  $(h_0=1/32)$ & 5, 5 & 5, 5 & nc & 8, 6 & 8, 6 & nc \\\hline
          $t_{\mathrm{mg}}$ (sec.) & 122 & 348 & -- & 141 & 389 & --\\\hline
          \emph{eff}=$t_{\mathrm{cg}}/t_{\mathrm{mg}}$&  1.86 & 2.92  & --  & 4.11 & 7.08 & -- \\\hline
          \# mg  $(h_0=1/64)$ & n/a  & 4, 4  & 4, 4 & n/a & 5, 5 & 5, 5\\\hline
          $t_{\mathrm{mg}}$ (sec.) &  & 4216 & 4981 &  & 4073 & 5792  \\\hline
          \emph{eff}=$t_{\mathrm{cg}}/t_{\mathrm{mg}}$&   & 0.24 & 1.53  &  & 0.68 & 3.36\\\hline\hline
          \multicolumn{7}{|c|}{$\gamma_p=10^{-3}$ (more pressure control)}\\\hline
          \# cg &  43, 46  &  44, 48   &  46, 48  & 104, 124 & 106, 120 & 109, 123\\\hline
          $t_{\mathrm{cg}}$ (sec.)  & 239 & 1087 & 8329  & 605 & 2679 & 20676 \\\hline
          \# mg  $(h_0=1/32)$ & 5, 7 & 5, 7 & nc  & 8, 9 & 8, 9 & nc\\\hline
          $t_{\mathrm{mg}}$ (sec.) & 129 & 359 & -- &  140 & 419  & -- \\\hline
          \emph{eff}=$t_{\mathrm{cg}}/t_{\mathrm{mg}}$ & 1.85  & 3.03  & --  & 4.32 & 6.39 & -- \\\hline
          \# mg  $(h_0=1/64)$ & n/a & 5, 7 & 5, 7 &  n/a & 7, 8 & 6, 8\\\hline
          $t_{\mathrm{mg}}$ (sec.) &  & 4360 & 5573 &  & 4427 & 5768  \\\hline
          \emph{eff}=$t_{\mathrm{cg}}/t_{\mathrm{mg}}$ &   & 0.25  & 1.49   &  & 0.61 & 3.58  \\\hline\hline
      \end{tabular}}
      \label{tab:nu_pm1}
\end{center}
\end{table}

\begin{table}[!h]
 \begin{center}
 \caption{Iteration counts and runtimes for MGCG vs. CG; $\nu=0.01$, $Re =\nu^{-1}\|Y(u)\|_{\infty} \approx 104$.
   Tolerance is set at $10^{-8}$.}
      {\begin{tabular}{|@{}c|c|c|c||c|c|c|}\hline
                & \multicolumn{3}{|c||}{$\beta=10^{-4}$} & \multicolumn{3}{c|}{$\beta=10^{-5}$}\\ \hline
          $h_j^{-1}$ & $64$ & $128$ & $256$ & $64$ & $128$ & $256$  \\\hline\hline
           \multicolumn{7}{|c|}{$\gamma_p=0$ (velocity control only)}\\\hline
           \# cg &  281, 289  &  259, 247   & 382, 274  & 819, 837,  &  777, 716   &  1136, \\
                 &            &             &           & 459       &             &    816  \\\hline
          $t_{\mathrm{cg}}$ (hours)  & 0.42 & 1.73 & 11.40 & 1.54 & 5.15 & 33.63\\\hline
          \# mg           & 10, 10 & 9, 9 &  6, 4 & 27, 31,& 25, 27 &  16, 11\\
           $(h_0=1/32)$   &        &      &       &  15    &        &        \\\hline
          $t_{\mathrm{mg}}$ (hours) & 0.05 & 0.15 & 0.58 & 0.11 & 0.29 & 0.93  \\\hline
          \emph{eff}=$t_{\mathrm{cg}}/t_{\mathrm{mg}}$&  8.81 & 11.30 &  19.82 & 14.70 & 18.07 &  36.11 \\\hline\hline
            \multicolumn{7}{|c|}{$\gamma_p=10^{-4}$ (some pressure control)}\\\hline
          \# cg & 299, 318 &  309, 306   &  500, 493 & 834, 909,  &  891, 965   &  1644, \\
                &          &             &           & 507        &             &  1354  \\ \hline
          $t_{\mathrm{cg}}$ (hours)  & 0.43 & 2.12 & 16.92 &  1.45 & 6.36 & 50.57\\\hline
          \# mg         & 12, 13 & 13, 13 & nc & 36, 38,& 35, 38 & nc \\
          $(h_0=1/32)$  &        &        &    & 23     &        &     \\\hline
          $t_{\mathrm{mg}}$ (hours) & 0.05 & 0.19 & -- & 0.11 & 0.38 & --\\\hline
          \emph{eff}=$t_{\mathrm{cg}}/t_{\mathrm{mg}}$& 7.94  & 10.99 &  -- & 12.60  & 16.83 &  -- \\\hline
          \# mg        & n/a  & 7, 7  & 9, 8 &  n/a  & 12, 12  &  19, 14 \\
          $(h_0=1/64)$ &      &       &      &       &         &         \\\hline
          $t_{\mathrm{mg}}$ (hours) &  & 1.27 & 1.62 &   & 1.23 &  1.98\\\hline
          \emph{eff}=$t_{\mathrm{cg}}/t_{\mathrm{mg}}$&   & 1.67 & 10.45 & & 5.16 & 25.48 \\\hline\hline
          \multicolumn{7}{|c|}{$\gamma_p=10^{-3}$ (more pressure control)}\\\hline
          \# cg &  294, 332,  &  296, 336,   &  481, 597 & 922, 995, &   802, 1028,   &  1321,    \\
                &  213        &  161         &           & 728       &   561          &  1818, 629\\\hline
          $t_{\mathrm{cg}}$ (hours)  & 0.62 & 2.70 & 18.62 & 1.62 & 8.18 & 64.39 \\\hline
          \# mg        & 14, 16 & 15, 18, & nc &  42, 51, & 41, 57,& nc\\
          $(h_0=1/32)$ & 12     & 11      &    &  39      & 32     & \\\hline
          $t_{\mathrm{mg}}$ (hours) & 0.08 & 0.31 & -- & 0.13 & 0.59 & -- \\\hline
          \emph{eff}=$t_{\mathrm{cg}}/t_{\mathrm{mg}}$ & 7.99  & 8.97  & --& 12.17 & 13.78 & --   \\\hline
          \# mg        & n/a  & 8, 11,& 9, 17, &  n/a &  14, 18,&  16, 23, \\
          $(h_0=1/64)$ &      & 6     & 9      &      &  13     &  16     \\\hline
          $t_{\mathrm{mg}}$ (hours) &  & 1.78 & 2.84 & {} & 1.97 & 3.19 \\\hline
          \emph{eff}=$t_{\mathrm{cg}}/t_{\mathrm{mg}}$ &   &  1.52 & 6.56 &  & 4.16 & 20.22  \\\hline\hline
      \end{tabular}}
      \label{tab:nu_pm2}
\end{center}
\end{table}

\begin{table}[!h]
 \begin{center}
 \textcolor{black}{
\caption{Iteration counts and runtimes for MGCG vs. CG; $\nu=0.1$, $Re =\nu^{-1}\|Y(u)\|_{\infty} \approx 1.1$.}
      {\begin{tabular}{|@{}c|c|c|c||c|c|c|}\hline
                & \multicolumn{3}{|c||}{$\beta=10^{-4}$} & \multicolumn{3}{c|}{$\beta=10^{-5}$}\\ \hline
          $h_j^{-1}$ & $64$ & $128$ & $256$ & $64$ & $128$ & $256$  \\\hline\hline
           \multicolumn{7}{|c|}{$\gamma_p=10^{-3}$,\hspace{10pt}  $tol=10^{-2}$}\\\hline
          \# cg & \textcolor{black}{12, 17,} &  \textcolor{black}{10, 17}  & \textcolor{black}{10, 17} & 16, 39  &  16, 40,&  \textcolor{black}{15, 37,}  \\
               & \textcolor{black}{18, 17} &  \textcolor{black}{19, 17} & \textcolor{black}{19, 17}  & 44, 46 &  45, 47   &  \textcolor{black}{42, 44} \\\hline
          $t_{\mathrm{cg}}$ (sec.)  & \textcolor{black}{141} &  \textcolor{black}{765} & 5878 & 309 & 1817 & \textcolor{black}{12614} \\\hline
          $t_{\mathrm{total}}$ (sec.) & \textcolor{black}{370} & \textcolor{black}{1779} & 10323 & 536 & 2822 & \textcolor{black}{17175}\\\hline
          \# mg $(h_0=1/32)$  & \textcolor{black}{3, 4, 4}  & \textcolor{black}{3, 4, 4} & \textcolor{black}{2, 1, 4,} &  & 1, 4, & \textcolor{black}{nc}\\
                 &   &  &  \textcolor{black}{1, 4}  &  &   5, 5 & \\\hline
          $t_{\mathrm{mg}}$ (sec.) & \textcolor{black}{178}  & \textcolor{black}{469} & \textcolor{black}{3277} &  & 624 &  --\\\hline
          $t_{\mathrm{total}}$ (sec.) & \textcolor{black}{357}  & \textcolor{black}{1250} & \textcolor{black}{8702} &  & 1625 &-- \\\hline
          \# mg  $(h_0=1/64)$ &   & \textcolor{black}{3, 4} & \textcolor{black}{3, 2, 4} & 1, 5, & 1, 4, 2 & \textcolor{black}{1, 4,} \\
                              &   &  &   &  3, 5 &  & \textcolor{black}{3, 4}\\\hline
          $t_{\mathrm{mg}}$ (sec.) &  & \textcolor{black}{4150}  & \textcolor{black}{7771} &  197 & 6295  &  \textcolor{black}{10787}\\\hline
          $t_{\mathrm{total}}$ (sec.) &  & \textcolor{black}{4722} & \textcolor{black}{11316} & 426 &  7070 & \textcolor{black}{15307}\\\hline\hline
          \multicolumn{7}{|c|}{$\gamma_p=10^{-3}$,\hspace{10pt}  $tol=10^{-4}$}\\\hline
          \# cg & \textcolor{black}{22, 29} & \textcolor{black}{21, 28} & 21, 29 & 44, 79 & \textcolor{black}{45, 76} & \textcolor{black}{46, 78}\\\hline
          $t_{\mathrm{cg}}$ (sec.)  & \textcolor{black}{110}  & \textcolor{black}{586}  & 4788 & 260 & \textcolor{black}{1440} & \textcolor{black}{11429} \\\hline
          $t_{\mathrm{total}}$ (sec.) & \textcolor{black}{239} & \textcolor{black}{1154} & 7375 & 392 &\textcolor{black}{2010}  & \textcolor{black}{14025} \\\hline
          \# mg  $(h_0=1/32)$ & \textcolor{black}{4, 5} & \textcolor{black}{4, 5} & \textcolor{black}{4, 1, 5, 5} & 6, 7 & \textcolor{black}{4, 7} & \textcolor{black}{nc} \\\hline
          $t_{\mathrm{mg}}$ (sec.) & \textcolor{black}{117}  & \textcolor{black}{337} & \textcolor{black}{3114} & 124 & \textcolor{black}{344} &  -- \\\hline
          $t_{\mathrm{total}}$ (sec.) & \textcolor{black}{247} & \textcolor{black}{907} & \textcolor{black}{7605} & 257 &\textcolor{black}{907}  & -- \\\hline
          \# mg  $(h_0=1/64)$ & n/a & \textcolor{black}{3, 5}   & \textcolor{black}{3, 5}  &  n/a & \textcolor{black}{4, 6}  & \textcolor{black}{4, 6} \\\hline
          $t_{\mathrm{mg}}$ (sec.) &  & \textcolor{black}{4160} & \textcolor{black}{5308} &  & \textcolor{black}{4497} & \textcolor{black}{5489} \\\hline
          $t_{\mathrm{total}}$ (sec.) &   & \textcolor{black}{4736} & \textcolor{black}{7894} &  & \textcolor{black}{5066} &  \textcolor{black}{8064} \\\hline\hline
      \end{tabular}}
      \label{tab:nu_pm1tole-2}
}
\end{center}
\end{table}

\bibliographystyle{siam}
\bibliography{reference_mg}

\end{document}